\documentclass[11pt,a4paper]{article}

\usepackage{amsmath,amssymb,amsthm}
\usepackage{mathtools}
\usepackage{graphicx}
\usepackage{enumerate}
\usepackage{soul}
\usepackage{xcolor}
\usepackage[hidelinks]{hyperref}
\usepackage[numbers]{natbib}
\usepackage{tikz,tikz-cd,quiver}
\usepackage{algorithm, algorithmic}
\usepackage{lipsum}
\usepackage{subcaption}
\usepackage{placeins}
\usepackage[margin=1in]{geometry}
\usepackage{mathrsfs}
\usepackage{soul}
\usetikzlibrary{shapes}
\usetikzlibrary{positioning}
\usetikzlibrary{decorations.markings}

\numberwithin{equation}{section}

\newcommand{\R}{\mathbb{R}}
\newcommand{\Z}{\mathbb{Z}}
\newcommand{\F}{\mathbb{F}}
\newcommand{\Rp}{\R_{\geq 0}}
\newcommand{\Zp}{\Z_{\geq 0}}

\newcommand{\mvmap}{\rightrightarrows}
\newcommand{\surjection}{\twoheadrightarrow}
\newcommand{\injection}{\hookrightarrow}
\newcommand{\setof}[1]{\left\{ {#1}\right\}}
\newcommand{\setdef}[2]{\left\{ #1 \, \middle| \, #2 \right\}}
\newcommand{\cA}{{\mathcal A}}
\newcommand{\cB}{{\mathcal B}}
\newcommand{\cC}{{\mathcal C}}
\newcommand{\cR}{{\mathcal R}}
\newcommand{\cH}{{\mathcal H}}
\newcommand{\cU}{{\mathcal U}}
\newcommand{\cV}{{\mathcal V}}
\newcommand{\cX}{{\mathcal X}}
\newcommand{\cF}{{\mathcal F}}
\newcommand{\cG}{{\mathcal G}}
\newcommand{\cI}{{\mathcal I}}
\newcommand{\Ink}{I_{n,k}}
\newcommand{\cP}{{\mathcal P}}
\newcommand{\cD}{{\mathcal D}}
\newcommand{\stoptime}{\sigma}
\newcommand{\incl}{\tilde{\iota}}
\newcommand{\susp}{\Sigma}
\newcommand{\rhosusp}{\rho_{\susp}}
\newcommand{\Wab}{W_{a,b}}
\newcommand{\inv}{^{-1}}

\newcommand{\evarphi}{\bar{\varphi}}
\newcommand{\tmu}{\widetilde{\mu}}
\newcommand{\tA}{\widetilde{A}}
\newcommand{\tB}{\widetilde{B}}
\newcommand{\tC}{\widetilde{C}}
\newcommand{\tD}{\widetilde{D}}
\newcommand{\tE}{\widetilde{E}}
\newcommand{\tF}{\widetilde{F}}
\newcommand{\tK}{\widetilde{K}}
\newcommand{\tL}{\widetilde{L}}
\newcommand{\tM}{\widetilde{M}}
\newcommand{\tN}{\widetilde{N}}

\newcommand{\tQ}{\widetilde{Q}}

\newcommand{\tS}{\widetilde{S}}

\newcommand{\tU}{\widetilde{U}}
\newcommand{\tV}{\widetilde{V}}
\newcommand{\tW}{\widetilde{W}}
\newcommand{\tcA}{\widetilde{\cA}}
\newcommand{\tcB}{\widetilde{\cB}}
\newcommand{\tcD}{\widetilde{\cD}}
\newcommand{\tell}{\widetilde{\ell}}
\newcommand{\htN}{\widehat{N}}
\newcommand{\htL}{\widehat{L}}

\newcommand{\hdot}{\dot{h}}
\newcommand{\vdot}{\dot{v}}
\newcommand{\thetadot}{\dot{\theta}}

\newcommand{\udot}{\dot{u}}

\newcommand{\CH}{\mathrm{CH}}
\newcommand{\SCH}{\susp\CH}
\newcommand{\id}{\mathrm{id}}

\newcommand{\scR}{\mathscr{R}}

\newcommand{\sInvsetp}{\mathsf{Invset}^{+}}
\newcommand{\sANbhd}{\mathsf{ANbhd}}
\newcommand{\sANbhdR}{\mathsf{ANbhd}_{\scR}}
\newcommand{\sAtt}{\mathsf{Att}}

\DeclareMathOperator{\cl}{cl}
\DeclareMathOperator{\Int}{int}
\DeclareMathOperator{\diam}{diam}
\DeclareMathOperator{\dom}{dom}

\DeclareMathOperator{\MG}{MG}
\newcommand{\Inv}{\operatorname{Inv}}
\newcommand{\im}{\operatorname{im}}

\theoremstyle{plain}
\newtheorem{theorem}{Theorem}[section]
\newtheorem{lemma}[theorem]{Lemma}
\newtheorem{proposition}[theorem]{Proposition}
\newtheorem{corollary}[theorem]{Corollary}
\theoremstyle{definition}
\newtheorem{definition}[theorem]{Definition}

\newtheorem{example}[theorem]{Example}
\theoremstyle{remark}
\newtheorem{remark}[theorem]{Remark}

\title{Computation of Attractor Lattices for a Class of Hybrid Systems}
\author{William Kalies\thanks{Department of Mathematics and Statistics, University of Toledo, Toledo, OH 43606, USA.} \and Bernardo Rivas\footnotemark[1] \and Tony Wehbe\footnotemark[1]}

\date{\today}

\begin{document}
\maketitle

\begin{abstract}
We study the attractor lattice of a class of hybrid dynamical systems on compact metric spaces. Under the trapping guard condition, we prove that hybrid attractors form a bounded distributive lattice isomorphic to the attractor lattice of an associated semiflow. The regularized preimages define a lattice epimorphism from a sublattice of the regular closed, attracting neighborhoods of the semiflow onto those of the hybrid system, and from the whole lattice when the guard set and its reset image have empty interior. Then, using grids on the state space, we construct a convergent cofiltration of outer approximations of a fixed-time map of this semiflow. This construction realizes every finite sublattice of hybrid attractors by a lattice of attracting neighborhoods that are unions of grid elements at sufficiently fine resolutions, and provides index pairs for computing Conley indices. We illustrate these constructions computationally by extracting the Conley--Morse graphs of the bouncing ball, the rimless wheel, the spiking neuron, and exhibiting the attractor lattice for an impacting oscillator.
\end{abstract}

\section{Introduction}
\label{sec:introduction}

This paper extends the lattice theory of attractors from continuous maps and flows to a class of hybrid dynamical systems, which model dynamics with impacts, switching, and control~\cite{diBernardo:Budd:Champneys:Kowalczyk,Goebel:Sanfelice:Teel,vanDerSchaft:Schumacher}. For a continuous map or flow on a compact metric space, the attracting neighborhoods and the attractors form bounded distributive lattices, and taking $\omega$-limit sets is a lattice epimorphism from the attracting neighborhoods onto the attractors~\cite{Kalies:Mischaikow:Vandervorst:2014,Kalies:Mischaikow:Vandervorst:2016}. Every Morse representation can be obtained from a finite sublattice of attractors~\cite{Kalies:Mischaikow:Vandervorst:2022}, and for a continuous map, every finite sublattice of attractors is realized by forward invariant families of a convergent cofiltration of outer approximations at all sufficiently fine resolutions~\cite{Kalies:Mischaikow:Vandervorst:2016}. Since resets can make the state at a fixed time depend discontinuously on the initial state, this theory does not apply directly to hybrid systems.

In this manuscript, a hybrid system $\cH=(X,\varphi,G,r)$ consists of a compact metric space $X$, a closed guard set $G\subseteq X$, a local semiflow $\varphi$ on $X\setminus G$, and a continuous reset map $r:G\to X$. A trajectory follows $\varphi$ until it reaches $G$, where $r$ determines the next state. To obtain a continuous semiflow, we use the hybrid suspension~\cite{Kvalheim:Gustafson:Koditschek}, the space $\susp X$ obtained via the mapping torus of the reset map when $\cH$ satisfies the trapping guard condition. This yields a suspension semiflow $\Phi$ that follows $\varphi$ on the base space $\incl(X)$ and is gradient along the extension, as in the regularization of hybrid systems with Zeno trajectories~\cite{Johansson:Egerstedt:Lygeros:Sastry}. The resulting dynamics, $\Phi$, is a continuous semiflow on the compact metrizable space $\susp X$, the $\omega$-limit sets of $\cH$ (Definition~\ref{def:hybrid-omega}) satisfy $\omega(A,\cH)=\incl^{-1}(\omega(\incl(A),\Phi))$ for every $A\subseteq X$, and the attracting--repelling pairs of $\cH$, defined through trapping regions, are the preimages under $\incl$ of those of $\Phi$~\cite{Kvalheim:Gustafson:Koditschek}. Moreover, the Conley index of $\Phi$ also defines the suspension Conley index of isolated invariant sets of $\cH$~\cite{Rivas:Kalies}.

In this setting, we compare the lattices $\sAtt$ of attractors and $\sANbhdR$ of regular closed attracting neighborhoods of $\cH$ and $\Phi$, defined in Sections~\ref{sec:preliminaries} and~\ref{sec:hybrid-lattices}, through $\incl^{-1}$ and the regularized preimage $\rho(\tE):=\cl_X\Int_X(\incl^{-1}(\tE))$ of $\tE\subseteq\susp X$. Following the notation of Section~\ref{sec:preliminaries}, our main result is the following.
\begin{theorem}
  \label{thm:main}
  Let $\cH=(X,\varphi,G,r)$ be a hybrid system on a compact metric space satisfying the trapping guard condition, and let $\Phi$ be its suspension semiflow.
  \begin{enumerate}[(i)]
    \item $\incl\inv$ is a bounded lattice isomorphism from $\sAtt(\susp X,\Phi)$ to $\sAtt(\cH)$, with inverse $A\mapsto\susp(A)$.
    \item There is a sublattice $\tcD\subseteq\sANbhdR(\susp X,\Phi)$ such that $\rho:\tcD\surjection\sANbhdR(X,\cH)$ is a lattice epimorphism. For every $\tU\in\sANbhdR(\susp X,\Phi)$,
    \[
      \omega(\rho(\tU),\cH)=\incl^{-1}\left(\omega(\tU,\Phi)\right).
    \]
    If $\Int_X(G\cup r(G))=\emptyset$, one may take $\tcD=\sANbhdR(\susp X,\Phi)$.
    \item Let $(\cX_n)$ be a contracting cofiltration of grids on $X$. There exist multivalued maps $\cF_n^X:\cX_n\mvmap\cX_n$ such that every finite sublattice $\cA\subseteq\sAtt(\cH)$ admits, for all sufficiently large $n$, a lattice monomorphism
    \[
      \ell_n:\cA\injection\sInvsetp(\cX_n,\cF_n^X)
    \]
    whose evaluations form a sublattice of $\sANbhdR(X,\cH)$ and satisfy $\omega(|\ell_n(A)|,\cH)=A$ for every $A\in\cA$.
  \end{enumerate}
\end{theorem}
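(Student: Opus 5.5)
The proof splits along the three parts of the theorem. We take as given the results from \cite{Kvalheim:Gustafson:Koditschek} recalled above: $\Phi$ is a continuous semiflow on the compact metrizable space $\susp X$; $\omega(A,\cH)=\incl^{-1}(\omega(\incl(A),\Phi))$ for all $A\subseteq X$; and attractor--repeller pairs of $\cH$ are the $\incl$-preimages of those of $\Phi$.

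\textbf{Part (i).} First we show that $\incl^{-1}$ maps $\sAtt(\susp X,\Phi)$ into $\sAtt(\cH)$ and that $\susp$ maps back, with the two composites equal to the identity.

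For $\tA\in\sAtt(\susp X,\Phi)$, choose an attracting neighborhood $\tU$ with $\omega(\tU,\Phi)=\tA$. The points of the suspension fibres are gradient-flowed onto $\incl(X)$, so every $\Phi$-attractor is the closure of the $\Phi$-saturation of its trace on $\incl(X)$. This gives $\tA=\susp(\incl^{-1}(\tA))$.

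Conversely, let $A\in\sAtt(\cH)$ with hybrid trapping region $U$. Then $\susp(U)$, suitably thickened in the fibre direction, is a $\Phi$-trapping region. Its $\omega$-limit set pulls back to $A$ by the $\omega$-limit identity.

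Lattice operations are preserved as follows. Joins are unions in both lattices, so $\incl^{-1}$ preserves them directly. Meets are $\omega(\cdot)$ of intersections, and $\incl^{-1}$ commutes with both intersection and $\omega$ by the identity above. The bounds $\emptyset$ and $\omega(X)$ are preserved. A bijective lattice homomorphism is an isomorphism, which proves (i).

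\textbf{Part (ii).} We define $\tcD$ to be the set of $\tU\in\sANbhdR(\susp X,\Phi)$ that are saturated near $\incl(G)\cup\incl(r(G))$. Concretely, $\tU$ meets the fibre over $g\in G$ either entirely or not at all, and this agrees with membership of $r(g)$. We check that $\tcD$ is closed under $\cup$ and under the regularized meet $\cl\Int(\cdot\cap\cdot)$.

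The key computations are:
\begin{itemize}
\item $\cl\Int$ commutes with $\incl^{-1}$ on $\tcD$, because $\incl$ is an open embedding off the guard region, and saturation handles the guard.
\item Forward invariance of $\tU$ under $\Phi$ transfers to hybrid forward invariance of $\rho(\tU)$.
\end{itemize}
These make $\rho$ a lattice homomorphism on $\tcD$.

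For surjectivity, given $U\in\sANbhdR(X,\cH)$ we take $\cl\Int$ of the $\Phi$-forward image of a fibre-saturated thickening of $\susp(U)$. This lies in $\tcD$ and regularizes back to $U$.

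The formula $\omega(\rho(\tU),\cH)=\incl^{-1}(\omega(\tU,\Phi))$ follows from three facts:
\begin{itemize}
\item $\rho(\tU)$ and $\incl^{-1}(\tU)$ differ only by a set that is nowhere dense in $X$;
\item $\omega$ of an attracting neighborhood depends only on its interior;
\item the $\omega$-limit identity from \cite{Kvalheim:Gustafson:Koditschek}.
\end{itemize}
When $\Int_X(G\cup r(G))=\emptyset$, the regularization automatically discards the guard contributions. In that case the saturation condition is vacuous after $\cl\Int$, so every $\tU$ works and $\tcD=\sANbhdR(\susp X,\Phi)$.

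I expect the main obstacle to be the meet-preservation of $\rho$. Interiors interact badly with the quotient $\incl$ at $G$. The plan is to prove the local identity $\Int_X\incl^{-1}(\tV)=\incl^{-1}(\Int\tV)$ off $G$, and to use saturation to control behaviour at $G$.

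\textbf{Part (iii).} Fix a time $\tau>0$ and the time-$\tau$ map $\Phi_\tau$. Grids $\cX_n$ on $X$ induce grids $\widetilde{\cX}_n$ on $\susp X$, obtained by taking products of the base cells with a subdivision of the fibre direction. On $\widetilde{\cX}_n$ we build outer approximations $\widetilde{\cF}_n$ of $\Phi_\tau$ that form a convergent cofiltration. We then apply \cite{Kalies:Mischaikow:Vandervorst:2016}: every finite sublattice of $\sAtt(\susp X,\Phi)\cong\sAtt(\cH)$ is realized for large $n$ by forward-invariant sets of $\widetilde{\cF}_n$ whose $\omega$-limit sets are the given attractors.

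The map $\cF_n^X$ is defined by projecting, $\cF_n^X(\xi)=\{\eta : \eta\cap\incl^{-1}|\widetilde{\cF}_n(\tilde\xi)|\neq\emptyset\}$. The homomorphism $\ell_n$ is obtained by restricting the realizing lattice to base cells. We choose the realizing sets fibre-saturated, which places them in $\tcD$. Part (ii) then gives that the evaluations $|\ell_n(A)|$ form a sublattice of $\sANbhdR(X,\cH)$ with $\omega(|\ell_n(A)|,\cH)=A$. Injectivity of $\ell_n$ follows because distinct attractors have distinct $\omega$-limit sets.
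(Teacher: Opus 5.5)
The main gap is in part (ii), and it carries over into (iii). Saturating whole fibres forces $g\in\incl^{-1}(\tU)\Leftrightarrow r(g)\in\incl^{-1}(\tU)$ for every $g\in G$. Regular closed attracting neighborhoods need not be forward invariant, so your $\tcD$ does not in general map onto $\sANbhdR(X,\cH)$.

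Take Example~\ref{ex:ANbhd-preimage-meet}: $X=\setof{0,1,2}$, $G=\setof{0,1}$, $r(0)=1$, $r(1)=2$, $\susp X\cong[0,2]$. The fibres are $[0,1]$ and $[1,2]$, so your $\tcD$ is $\setof{\emptyset,\susp X}$ and $\rho(\tcD)=\setof{\emptyset,X}$. Yet $\setof{2}$, $\setof{0,2}$ and $\setof{1,2}$ all lie in $\sANbhdR(X,\cH)$. Since $X$ is discrete, any $\tU$ with $\rho(\tU)=\setof{2}$ contains $\incl(2)=\pi(1,1)$ but not $\incl(1)=\pi(1,0)$, so it cannot be saturated.

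Your surjectivity construction fails for the same reason. $\incl^{-1}$ of a $\Phi$-forward image of $\susp(U)$ contains $\psi([0,\infty),U)$, which is all of $X$ for $U=\setof{0,2}$. Also, members of $\sANbhdR(\susp X,\Phi)$ are not forward invariant, so your step transferring forward invariance has nothing to act on.

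The missing idea is the \emph{partial} thickening $\Wab(U)$, which adds only $\pi((U\cap G)\times[0,a])$ and $\pi(r^{-1}(U)\times[b,1])$. It satisfies $\incl^{-1}(\Wab(U))=U$, $\incl(\Int_X U)\subseteq\Int_{\susp X}\Wab(U)$ and $\omega(\Wab(U),\Phi)=\omega(\incl(U),\Phi)$. Hence $U\mapsto\rhosusp(\Wab(U))$ is a section of $\rho$. The paper takes $\tcD$ to be the sublattice generated by the sets $\tU_j:=\rhosusp(\Wab(U_j))$. It checks $\rho(\bigwedge_j\tU_j)=\bigwedge_j U_j$ on finite meets of generators directly from the two preimage properties. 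It then extends to all of $\tcD$ by distributivity and preservation of joins (Proposition~\ref{prop:constructed-ANbhd-epi}).

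When $\Int_X(G\cup r(G))=\emptyset$, the operative fact is not that saturation becomes vacuous. It is that $\Int_X\incl^{-1}(\tE\setminus\Int_{\susp X}\tE)\subseteq\Int_X(G\cup r(G))$ for every regular closed $\tE$ (Lemma~\ref{lem:regularized-preimage-boundary}). Your local identity must therefore exclude $r(G)$ as well as $G$, since $\incl$ is not locally open at $r(G)$ either (Example~\ref{ex:suspension-trapping-region}).

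In (iii) you route the sublattice property through this $\tcD$ and assert that the realizing families can be chosen fibre-saturated. The realization theorem gives no such control, and forward invariance under an outer approximation forces only the forward part of a fibre.

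The grid step also fails as stated. The products $\incl(|\mu|)$ and $\pi((|\mu|\cap G)\times\Ink)$ need not be regular closed: in Example~\ref{ex:regularized-preimage-meet}, $\incl(X)$ is a single point of $\susp X\cong S^1$. The sets $|\mu|\cap G$ also need not have disjoint interiors in $G$. This is why the paper enlarges the base images to $\tE_n(\mu)$, regularizes, and takes generated atoms.

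Your one-step projection $\cF_n^X$, defined by $|\eta|$ meeting $\incl^{-1}|\widetilde{\cF}_n(\tilde\xi)|$, contains every base cell touching a base image cell. So the base part of an $\widetilde{\cF}_n$-forward-invariant family need not be $\cF_n^X$-forward invariant.

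The paper needs no $\tcD$ for (iii). By construction of $\Xi_n$, each suspension cell regularizes either to a single base cell $|\mu|$ (for $d_n(\mu)$) or to $\emptyset$. Hence $\rho(|\cU|_{\susp})=|d_n^{-1}(\cU)|$ is automatically a bounded lattice homomorphism on the grid algebra (Proposition~\ref{prop:finite-grid-preimage}). The paper defines $\cF_n^X$ through $\cF_n$-paths whose intermediate cells avoid $d_n(\cX_n)$. This makes $d_n^{-1}$ an epimorphism onto $\sInvsetp(\cX_n,\cF_n^X)$ (Proposition~\ref{prop:base-scc-lattice}).

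In (i), you never show $\incl^{-1}(\tA)\in\sAtt(\cH)$. The paper checks that $\incl^{-1}(\tU)$ is a hybrid attracting neighborhood with $\omega$-limit $\incl^{-1}(\tA)$, using $\omega(A,\cH)=A$ from Lemma~\ref{lem:omega-preimage}. It then obtains meet preservation from $\incl^{-1}$ and $\susp$ being mutually inverse monotone maps. Your route, commuting $\incl^{-1}$ with $\omega$, is not justified as stated: the $\omega$-identity only covers sets of the form $\incl(A)$.
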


We prove part (i) in Theorem~\ref{thm:main-correspondence} and part (ii) in Proposition~\ref{prop:constructed-ANbhd-epi}, Lemma~\ref{lem:regularized-preimage-omega}, and Corollary~\ref{cor:ANbhd-epi-nowhere-dense}. Part (iii) follows in Section~\ref{sec:computability} from Proposition~\ref{prop:suspension-multivalued-map}, Theorem~\ref{thm:FiniteSublattice}, Corollary~\ref{cor:base-generated-neighborhoods}, and Proposition~\ref{prop:base-scc-lattice}.

By Proposition~\ref{prop:Att-lattice}, $\sAtt(\cH)$ is a bounded distributive lattice with $A\vee B=A\cup B$ and $A\wedge B=\omega(A\cap B,\cH)$. Part (i) is the counterpart, for attractors defined through attracting neighborhoods, of the correspondence between attracting--repelling pairs, and its proof adapts a construction from the proof of that correspondence~\cite{Kvalheim:Gustafson:Koditschek}. The regularized preimage maps $\sANbhdR(\susp X,\Phi)$ onto $\sANbhdR(X,\cH)$ and preserves joins and bounds, but it can fail to preserve meets (Example~\ref{ex:ANbhd-preimage-meet}). Theorem~\ref{thm:ANbhd-meets} characterizes when it preserves meets, and in general, part (ii) holds with $\tcD$ generated by the closures of the interiors of the neighborhoods constructed for part (i).

Part (iii) applies the realization theorem (Theorem~\ref{thm:realization}) to a convergent cofiltration of outer approximations $\cF_n$ of the $\tau$-map $f_\tau:=\Phi(\tau,\cdot)$ on grids $\Xi_n$ of $\susp X$ built from $\cX_n$. Each $\mu\in\cX_n$ determines $d_n(\mu)\in\Xi_n$ with $\incl^{-1}(|d_n(\mu)|_{\susp})=|\mu|$, and $\mu'\in\cF_n^X(\mu)$ if a path of $\cF_n$ of positive length leads from $d_n(\mu)$ to $d_n(\mu')$ and its intermediate elements are outside $d_n(\cX_n)$. The nested pairs of forward invariant sets of an outer approximation of $f_\tau$ also provide index pairs for $\Phi$, and hence Conley indices (Proposition~\ref{prop:grid-conley-index}). In Section~\ref{sec:example}, we compute multivalued maps from $f_\tau$ for a bouncing ball, a rimless wheel, a spiking neuron, and an impacting oscillator with an adapted version of CMGDB~\cite{CMGDB}.

\section{Preliminaries}
\label{sec:preliminaries}

\subsection{Lattices, Attractors, and Outer Approximations}
\label{subsec:lattices}

We use the order-theoretic approach to dynamics of~\cite{Kalies:Mischaikow:Vandervorst:2014,Kalies:Mischaikow:Vandervorst:2016,Kalies:Mischaikow:Vandervorst:2022}. A \emph{lattice} is a set $L$ equipped with binary operations $\vee,\wedge:L\times L\to L$, called \emph{join} and \emph{meet}, that are idempotent, commutative, associative, and satisfy the identities $a\wedge(a\vee b)=a=a\vee(a\wedge b)$ for every $a,b\in L$. These operations induce a partial order $\leq$ via $a \leq b$ if and only if $a \wedge b = a$, or equivalently $a \vee b = b$. With respect to this order, $a\vee b$ and $a\wedge b$ are the least upper and greatest lower bounds of $a$ and $b$, respectively. A lattice is \emph{bounded} if it has a least element $0$ and a greatest element $1$, and is \emph{distributive} if
\[
  a\wedge(b\vee c)=(a\wedge b)\vee(a\wedge c),
  \quad
  a\vee(b\wedge c)=(a\vee b)\wedge(a\vee c), 
  \quad 
  \forall a,b,c \in L.
\]
A \emph{sublattice} of a bounded lattice $L$ is a subset closed under joins and meets that contains $0$ and $1$. A \emph{lattice homomorphism} preserves joins and meets, and a \emph{bounded lattice homomorphism} also preserves $0$ and $1$. We use \emph{monomorphism} and \emph{epimorphism} for an injective and a surjective bounded lattice homomorphism, respectively.

For a topological space $X$, write
\[
  \scR(X):=\setdef{F\subseteq X}{F=\cl\left(\Int(F)\right)}
\]
for the family of \emph{regular closed} subsets of $X$. For $F_1,F_2\in\scR(X)$, the operations
\[
  F_1\vee F_2:=F_1\cup F_2,
  \quad
  F_1\wedge F_2:=\cl\,\left(\Int(F_1)\cap\Int(F_2)\right),
\]
make $\scR(X)$ a bounded distributive lattice with least and greatest elements $\emptyset$ and $X$~\cite{Kalies:Mischaikow:Vandervorst:2016}. It is also a Boolean algebra with complement $F^*:=\cl_X(X\setminus F)$.

Let $(X,d)$ be a compact metric space and let $f:X\to X$ be continuous. For $U\subseteq X$, define the $\omega$-limit of $U$ under $f$ as
\[
  \omega(U,f):=\bigcap_{m\geq0}\cl\left(\bigcup_{n\geq m}f^n(U)\right).
\]
An \emph{attracting neighborhood} of $f$ is a set $U\subseteq X$ such that $\omega(U,f)\subseteq\Int(U)$. These form the family $\sANbhd(X,f)$, whose regular closed members form $\sANbhdR(X,f)$. An attracting neighborhood $U$ is a \emph{trapping region} if $f(\cl_X(U))\subseteq\Int_X(U)$~\cite{Kalies:Mischaikow:Vandervorst:2016}. The \emph{attractors} are the $\omega$-limits of attracting neighborhoods, and we denote their collection by $\sAtt(X,f)$. By~\cite{Kalies:Mischaikow:Vandervorst:2014}, the map $\omega(\cdot,f)$ satisfies
\[
  \omega(U_1\cap U_2,f) \subseteq \omega(U_1,f)\cap\omega(U_2,f),
  \quad
  \omega(\omega(U_1,f),f)=\omega(U_1,f).
\]
By~\cite{Kalies:Mischaikow:Vandervorst:2014,Kalies:Mischaikow:Vandervorst:2016}, $\sANbhd(X,f)$ and $\sANbhdR(X,f)$ are bounded distributive sublattices of $\cP(X)$ and $\scR(X)$, respectively, while $\sAtt(X,f)$ is a bounded distributive lattice with
\[
  A\vee B=A\cup B,
  \quad
  A\wedge B=\omega(A\cap B,f),
\]
and least and greatest elements $\emptyset$ and $\omega(X,f)$. The maps
\begin{equation}
  \omega(\cdot,f):\sANbhd(X,f)\surjection\sAtt(X,f),
  \quad
  \omega(\cdot,f):\sANbhdR(X,f)\surjection\sAtt(X,f)
  \label{eq:classical-omega-epi}
\end{equation}
are bounded lattice epimorphisms.

Let $\varphi:\Rp\times X\to X$ be a continuous semiflow, that is, a continuous map satisfying
\[
  \varphi(0,x)=x,
  \quad
  \varphi(t+s,x)=\varphi\left(t,\varphi(s,x)\right)
  \quad \forall x\in X,\ s,t\geq0.
\]
For $I\subseteq\Rp$ and $U\subseteq X$, write $\varphi(I,U):=\setdef{\varphi(t,x)}{t\in I,\ x\in U}$. A set $V\subseteq X$ is \emph{forward invariant} under $\varphi$ if $\varphi([0,\infty),V)\subseteq V$ and \emph{invariant} if $\varphi(t,V)=V$ for every $t\geq0$. A forward invariant $V$ is a \emph{trapping region} if $\varphi(\tau,\cl_X(V))\subseteq\Int_X(V)$ for some $\tau>0$~\cite{Kalies:Mischaikow:Vandervorst:2014}.

For $U\subseteq X$, define
\[
  \omega(U,\varphi)
  :=\bigcap_{T\geq0}\cl\,\left(\varphi([T,\infty),U)\right).
\]
Replacing $\omega(\cdot,f)$ by $\omega(\cdot,\varphi)$ in the definitions of attracting neighborhoods and attractors above defines $\sANbhd(X,\varphi)$, $\sANbhdR(X,\varphi)$, and $\sAtt(X,\varphi)$. A trapping region for $\varphi$ is equivalently a forward invariant attracting neighborhood~\cite{Kalies:Mischaikow:Vandervorst:2014}. The previous identities and lattice statements hold with $\varphi$ instead of $f$, and every $\omega(U,\varphi)$ is compact and invariant.

For $\tau>0$, the $\tau$-map of $\varphi$ is $f_\tau:=\varphi(\tau,\cdot)$. The identity map $\sAtt(X,\varphi)\to\sAtt(X,f_\tau)$ is a lattice isomorphism~\cite{Kalies:Mischaikow:Vandervorst:2016}. For arbitrary $U\subseteq X$, however, the two $\omega$-limits may differ. If $U\in\scR(X)$ and $f_\tau(U)\subseteq\Int(U)$, then $U$ is a trapping region for $f_\tau$ and an attracting neighborhood for both $f_\tau$ and $\varphi$, with $\omega(U,f_\tau)=\omega(U,\varphi)$~\cite{Kalies:Mischaikow:Vandervorst:2016}.

For compact $B\subseteq X$, write $\Inv(B,f)$ for the points lying on full orbits
\[
  (x_j)_{j\in\Z}\subseteq B,\quad f(x_j)=x_{j+1}\quad\forall j\in\Z,
\]
and $\Inv(B,\varphi)$ for the points lying on full orbits of $\varphi$ in $B$. We call $B$ \emph{isolating} for $f$ if $\Inv(B,f)\subseteq\Int_X B$, and analogously for $\varphi$. For either system, a \emph{Morse representation} of a compact invariant subset $S\subseteq X$ is a finite poset of nonempty, pairwise disjoint compact invariant subsets of $S$, called \emph{Morse sets}, such that every full orbit in $S$ either lies in one Morse set or has its $\alpha$- and $\omega$-limits in Morse sets $M'$ and $M$ with $M<M'$.

\begin{definition}
  \label{def:grid}
  A \emph{grid} on $X$ is a finite family of nonempty regular closed sets $|\xi|$, indexed by $\xi\in\cX$, such that
  \[
    X=\bigcup_{\xi\in\cX}|\xi|,
    \quad
    |\xi|\cap\Int|\xi'|=\emptyset
    \quad\text{whenever }\xi\neq\xi'.
  \]
  The \emph{evaluation map} $|\cdot|:\cP(\cX)\to\scR(X)$ is defined by $|\cU|=\bigcup_{\xi\in\cU}|\xi|$. For $\xi\in\cX$, let $\diam(\xi):=\max_{x,x'\in|\xi|}d(x,x')$, so that the diameter of $\cX$ is $\diam(\cX):=\max_{\xi\in\cX}\diam(\xi)$.
\end{definition}

By~\cite{Kalies:Mischaikow:Vandervorst:2016}, the evaluation map is a bounded distributive lattice isomorphism
\[
  \cP(\cX)\xrightarrow{\;\cong\;}\scR_{\cX}(X):=\setdef{|\cU|}{\cU\subseteq\cX},
\]
with $|\cU\cap\cV|=|\cU|\wedge|\cV|$ for all $\cU,\cV\subseteq\cX$.

For a finite family $\cF\subseteq\scR(X)$, the atoms of the Boolean subalgebra it generates form a grid~\cite{Kalies:Mischaikow:Vandervorst:2016}, called the \emph{grid generated by} $\cF$. To describe its elements, write $F^{+}:=F$ and $F^{-}:=F^*$ for $F\in\cF$, and define
\[
  |\sigma|:=\bigwedge_{F\in\cF}F^{\sigma(F)}
  \quad\forall\sigma:\cF\to\setof{+,-}.
\]
The nonempty $|\sigma|$ form the coarsest grid in which every member of $\cF$ is a union of grid elements. For two sets $F_1,F_2$, its elements are the nonempty sets among
\[
  F_1\wedge F_2,\quad F_1\wedge F_2^*,\quad F_1^*\wedge F_2,\quad F_1^*\wedge F_2^*.
\]
For example, the intervals $[0,2]$ and $[1,3]$ in $[0,3]$ generate a grid with elements $[0,1]$, $[1,2]$, and $[2,3]$, so each original interval is the union of two adjacent grid elements.

We say that a grid $\cX'$ \emph{refines} $\cX$, written as $\cX'\leq\cX$, if for every $\xi'\in\cX'$ there is a unique $\xi\in\cX$ such that $|\xi'|\subseteq|\xi|$. Their \emph{common refinement} $\cX\wedge\cX'$ is generated by the elements of both grids and consists of the nonempty meets $|\xi|\wedge|\xi'|$, where $\xi\in\cX$ and $\xi'\in\cX'$~\cite{Kalies:Mischaikow:Vandervorst:2016}.

\begin{definition}
  \label{def:outer-approximation}
  Let $f:X\to X$ be continuous and let $\cX$ be a grid. A \emph{multivalued map} $\cF:\cX\mvmap\cX$ assigns a set $\cF(\xi)\subseteq\cX$ to every $\xi\in\cX$. For $\cU\subseteq\cX$, write
  \[
    \cF(\cU):=\bigcup_{\xi\in\cU}\cF(\xi).
  \]
  The map $\cF$ is an \emph{outer approximation} of $f$ if
  \[
    f(|\xi|)\subseteq\Int|\cF(\xi)|
    \quad\forall\xi\in\cX.
  \]
  Its lattice of forward invariant sets is
  \[
    \sInvsetp(\cX,\cF) :=\setdef{\cU\subseteq\cX}{\cF(\cU)\subseteq\cU},
    \text{ with } \wedge = \cap,\, \vee = \cup.
  \]
\end{definition}

A multivalued map $\cF:\cX\mvmap\cX$ can be viewed as a directed graph on $\cX$, with edges $(\xi,\xi')$ whenever $\xi'\in\cF(\xi)$. Since a forward-invariant family contains every vertex reachable from it, $\sInvsetp(\cX,\cF)$ consists of unions of strongly connected components together with all their successors. Among these components, those whose induced graphs contain an edge are called \emph{recurrent} and form the \emph{Morse graph} $\MG(\cF)$, ordered by $M'\leq M$ when $M'$ is reachable from $M$. In this setting, we also call recurrent components and their grid realizations \emph{Morse sets}.

If $\cF$ is an outer approximation of $f$ and $\cU\in\sInvsetp(\cX,\cF)$, then
\begin{equation}
  f(|\cU|)
  \subseteq\Int|\cF(\cU)|
  \subseteq\Int|\cU|.
  \label{eq:invset-realization}
\end{equation}
By~\cite{Kalies:Mischaikow:Vandervorst:2016}, the evaluation map restricts to the bounded lattice monomorphism
\begin{equation}
  \sInvsetp(\cX,\cF)
  \to\sANbhdR(X,f),
  \quad
  \cU\longmapsto|\cU|.
  \label{eq:invset-neighborhood-map}
\end{equation}
The composition of~\eqref{eq:invset-neighborhood-map} with~\eqref{eq:classical-omega-epi} is a bounded lattice homomorphism into $\sAtt(X,f)$.

A sequence $\cX_0\geq\cX_1\geq\cdots$ is a \emph{cofiltration of grids}, called \emph{contracting} if $\diam(\cX_n)\to0$. For grids $\cX'\leq\cX$, write $|\cdot|_{\cX'}$ and $|\cdot|_{\cX}$ for the two evaluation maps. If $\cF':\cX'\mvmap\cX'$ and $\cF:\cX\mvmap\cX$, write $\cF'\leq\cF$ when
\begin{equation}
  |\cF'(\cU')|_{\cX'}\subseteq|\cF(\cU)|_{\cX}
  \quad\text{whenever}\quad
  |\cU'|_{\cX'}=|\cU|_{\cX},
  \label{eq:outer-map-refinement}
\end{equation}
for $\cU'\subseteq\cX'$ and $\cU\subseteq\cX$. Given a cofiltration $(\cX_n)_{n\geq0}$ of grids, a sequence $(\cF_n:\cX_n\mvmap\cX_n)_{n\geq0}$ is a \emph{cofiltration of multivalued maps} if $\cF_0\geq\cF_1\geq\cdots$ under this order~\cite{Kalies:Mischaikow:Vandervorst:2016}. For a grid $\cX_n$, the minimal and $r$-minimal multivalued maps are
\begin{equation*}
  \cF_n^{\min}(\xi) :=\setdef{\xi'\in\cX_n}{f(|\xi|)\cap|\xi'|\neq\emptyset},\quad
  \cF_n^{r}(\xi) :=\setdef{\xi'\in\cX_n}{B_r(f(|\xi|))\cap|\xi'|\neq\emptyset},
\end{equation*}
where $r>0$ and $B_r(E):=\setdef{x\in X}{d(x,E)<r}$. We also write $\overline B_r(E):=\setdef{x\in X}{d(x,E)\leq r}$ for $r\geq0$. A cofiltration $(\cF_n:\cX_n\mvmap\cX_n)_{n\geq0}$ of outer approximations of $f$ is \emph{convergent} if $\diam(\cX_n)\to0$ and there are numbers $r_n>0$ with $r_n\to0$ such that
\[
  \cF_n^{\min}(\xi)\subseteq\cF_n(\xi) \subseteq\cF_n^{r_n}(\xi), \quad \forall \xi\in\cX_n.
\]

\begin{theorem}[{\cite{Kalies:Mischaikow:Vandervorst:2016}}]
  \label{thm:realization}
  Let $X$ be a compact metric space and let $f:X\to X$ be continuous. Suppose $(\cF_n:\cX_n\mvmap\cX_n)_{n\geq0}$ is a convergent cofiltration of outer approximations of $f$. If $\cA\subseteq\sAtt(X,f)$ is a finite sublattice, then there exists $n_{\cA}$ such that for every $n\geq n_{\cA}$ there is a lattice monomorphism
  \[
    \ell_n:\cA\to\sInvsetp(\cX_n,\cF_n)
  \]
  satisfying
  \[
    \omega\left(|\ell_n(A)|,f\right)=A
    \quad\forall A\in\cA.
  \]
  Moreover, $\ell_n$ can be chosen so that $\setdef{|\ell_n(A)|}{A\in\cA}$ is a sublattice of $\sANbhd(X,f)$.
\end{theorem}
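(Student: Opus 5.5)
The plan is to reduce the statement to a finite combinatorial problem about trapping regions, and then show that fine grids see these trapping regions faithfully. First I would lift $\cA$ to neighborhoods. Since $\omega(\cdot,f):\sANbhdR(X,f)\surjection\sAtt(X,f)$ is a bounded lattice epimorphism by~\eqref{eq:classical-omega-epi} and $\cA$ is finite, I would choose a finite sublattice $\setdef{U_A}{A\in\cA}\subseteq\sANbhdR(X,f)$ with $\omega(U_A,f)=A$. I would further arrange that each $U_A$ is a trapping region, $f(U_A)\subseteq\Int U_A$. To do this, I would use the join-irreducible elements of $\cA$ (Birkhoff representation) and a Lyapunov-type function: for each join-irreducible $A$, pick a trapping region for $A$ as a closed regular sublevel set $\setof{V_A\le c_A}$, and close the family under $\cup$ and $\wedge$.

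Compactness then gives a uniform gap
\[
  \varepsilon:=\min_{A\in\cA}d\bigl(f(U_A),X\setminus\Int U_A\bigr)>0.
\]
By uniform continuity of $f$, there is $\delta>0$ with $d(f(x),f(y))<\varepsilon/2$ whenever $d(x,y)<\delta$. I would choose $n_{\cA}$ so that for $n\ge n_{\cA}$ we have $\diam(\cX_n)<\min(\delta,\varepsilon/4)$ and $r_n<\varepsilon/4$.

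Next, for each join-irreducible $A$, I would define $\ell_n(A)$ to be the forward $\cF_n$-hull of the cells meeting $A$. For a general element I would set $\ell_n(A_1\vee\dots\vee A_k):=\bigcup_i\ell_n(A_i)$, so joins are preserved by construction. Every such family is forward invariant, hence lies in $\sInvsetp(\cX_n,\cF_n)$. The key estimate is that if a cell $\xi$ meets $U_A$, then every $\xi'\in\cF_n(\xi)\subseteq\cF_n^{r_n}(\xi)$ satisfies
\[
  |\xi'|\subseteq B_{r_n+\diam(\cX_n)+\varepsilon/2}\bigl(f(U_A)\bigr)\subseteq\Int U_A.
\]
By induction, the hull of cells meeting $A$ stays inside $U_A$, so $A\subseteq|\ell_n(A)|\subseteq U_A$. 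Monotonicity of $\omega$, together with $\omega(A,f)=A$ for an invariant attractor and $\omega(U_A,f)=A$, then yields $\omega(|\ell_n(A)|,f)=A$. This identity also gives injectivity of $\ell_n$, and the bounds are immediate: $\ell_n(\emptyset)=\emptyset$ and $\ell_n(\omega(X,f))=\cX_n$ after adjusting the top element. Because $\cF_n^{\min}\subseteq\cF_n$ and $\cF_n$ is an outer approximation, each $|\ell_n(A)|$ satisfies~\eqref{eq:invset-realization}, so it is a regular closed attracting neighborhood. Since the evaluation map is a lattice monomorphism, the family $\setdef{|\ell_n(A)|}{A\in\cA}$ is a sublattice as soon as $\ell_n$ is.

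The main obstacle is meet preservation, $\ell_n(A)\cap\ell_n(B)=\ell_n(A\wedge B)$, because hulls of cells near $A$ and near $B$ can overlap away from $A\wedge B=\omega(A\cap B,f)$. My first attempt would be to shrink the $U_A$ so that, for join-irreducibles, $U_A\cap U_B$ is a trapping region contained in $U_{A\wedge B}$. Then a cell lying in both hulls would have its entire forward hull inside $U_A\cap U_B$, which is absorbed by the hull of cells near $A\wedge B$ once the resolution is fine enough; this uses the same $\varepsilon$-gap argument applied to $U_A\cap U_B$. If that fails, the fallback is to redefine $\ell_n(A)$ as the set of all cells whose forward hull stays in $U_A$, which makes intersections automatic, and then to prove that joins are still preserved for large $n$ using the gap $\varepsilon$ and convergence $r_n\to0$.
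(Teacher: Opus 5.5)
\textbf{There is a genuine gap in the meet step and in the lifting step.} Your gap estimate is sound as far as it goes. It gives $A\subseteq|\ell_n(A)|\subseteq U_A$, hence $\omega(|\ell_n(A)|,f)=A$, and the forward-hull families are forward invariant and preserve joins. But meet preservation, which you flag as the main obstacle, actually fails for your $\ell_n$, and your first remedy cannot repair it. For non-invertible $f$, the set $A\cap B$ is only forward invariant, so $A\wedge B=\omega(A\cap B,f)$ can be strictly smaller than $A\cap B$. Here is a counterexample. Take $X=[-1,1]$, let $f\equiv 0$ on $[-\tfrac12,\tfrac12]$, and let $f$ be piecewise linear through $(-1,-1),(-\tfrac34,\tfrac12),(-\tfrac12,0)$ and through $(\tfrac12,0),(\tfrac34,-\tfrac12),(1,1)$. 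Then $f([-1,\tfrac12+\delta])=A:=[-1,\tfrac12]$, $f([-\tfrac12-\delta,1])=B:=[-\tfrac12,1]$ for small $\delta>0$, and $f([-\tfrac12,\tfrac12])=\setof{0}$. So $\cA=\setof{\emptyset,\setof{0},A,B,X}$ is a finite sublattice of attractors with $A\wedge B=\setof{0}$. Every cell meeting $\tfrac14\in A\cap B$ lies in both $\ell_n(A)$ and $\ell_n(B)$. On the other hand, $f$ vanishes near $0$ and $\cF_n(\xi)\subseteq\cF_n^{r_n}(\xi)$, so $\ell_n(\setof{0})$ consists of cells lying within $r_n+\diam(\cX_n)$ of $0$. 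Hence $\ell_n(A)\cap\ell_n(B)\neq\ell_n(A\wedge B)$ for every $n$ with $r_n+\diam(\cX_n)<\tfrac14$. The seed families already overlap along $A\cap B$, so shrinking $U_A$ and $U_B$ is irrelevant. Your claim that cells in both hulls are absorbed by the hull of cells near $A\wedge B$ is therefore false.

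Step 1 has a separate gap. Surjectivity of $\omega(\cdot,f)$ does not let you lift a finite sublattice. Closing trapping regions of the join-irreducibles under $\cup$ and $\wedge$ generally produces extra elements; for instance, $U_A\wedge U_B$ and $U_{A\wedge B}$ are in general different neighborhoods of the same attractor. What you need is an $\cA$-indexed family with $U_A\cup U_B=U_{A\vee B}$ and $U_A\wedge U_B=U_{A\wedge B}$, and such a lift to regular closed trapping regions is the substantive input behind the cited theorem. Your fallback needs this lift for meets to be automatic. Even with it, joins do not follow from the $\varepsilon$-gap and $r_n\to0$. A lift may have $U_A$ and $U_B$ abutting along a wall outside $U_A\wedge U_B$, which is allowed provided the wall maps into $\Int U_A\cap\Int U_B$. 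Cells straddling that wall then lie in $\ell_n(A\vee B)$ but in neither $\ell_n(A)$ nor $\ell_n(B)$, at every resolution.

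\textbf{A discretization that works.} Given a genuine lift $\setof{N_A}$ to regular closed trapping regions, use a preimage map. Assign to each $\xi\in\cX_n$ an atom $\beta(\xi)$ of the Boolean algebra generated by the $N_A$, chosen with $|\xi|\cap\Int\beta(\xi)\neq\emptyset$, and set $\ell_n(A):=\setdef{\xi}{\beta(\xi)\subseteq N_A}$. Each atom lies in $N_A$ or in $N_A^*$. Hence this map preserves unions and intersections exactly, contains every cell contained in $N_A$, and lies within $\diam(\cX_n)$ of $N_A$. Your $\varepsilon$-gap estimate then gives forward invariance and $\omega(|\ell_n(A)|,f)=A$.
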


\subsection{Hybrid Dynamical Systems}
\label{subsec:hybrid-systems}

We use the framework of~\cite{Kvalheim:Gustafson:Koditschek} and the notation of~\cite{Rivas:Kalies}. A \emph{local semiflow} on a metric space $F$ is a continuous map $\varphi:\cU\to F$ whose domain $\cU$ is an open neighborhood of $\setof{0}\times F$ in $\Rp\times F$, with $\varphi(0,x)=x$ for every $x\in F$. For every $x\in F$ and $s,t\geq0$, its domain satisfies
\[
  (s+t,x)\in\cU\quad\Longleftrightarrow\quad(t,x)\in\cU\ \text{and}\ (s,\varphi(t,x))\in\cU,
\]
and whenever these conditions hold,
\[
  \varphi(s+t,x)=\varphi\left(s,\varphi(t,x)\right).
\]

\begin{definition}
  \label{def:hybrid-system}
  A \emph{hybrid dynamical system} is $\cH=(X,\varphi,G,r)$, where $X$ is a compact metric space, $G\subseteq X$ is a closed set called the \emph{guard set}, $\varphi:\cU\to F$ is a local semiflow on $F:=X\setminus G$, and $r:G\to X$ is a continuous map called the \emph{reset map}.
\end{definition}

\begin{definition}
  \label{defn:ImpactTime}
  The \emph{impact time} $\stoptime:X\to\Rp\cup\setof{\infty}$ is
  \[
    \stoptime(x):=
    \begin{cases}
      \sup\setdef{t\geq0}{(t,x)\in\cU}, & x\in F,\\
      0, & x\in G.
    \end{cases}
  \]
\end{definition}
Thus $\cU=\setdef{(t,x)\in\Rp\times F}{t<\stoptime(x)}$. We assume that each orbit with finite impact time converges to a point of $G$. Write $\evarphi$ for the extension of $\varphi$ to these endpoints, defined by
\[
  \evarphi(\stoptime(x),x):=\lim_{t\uparrow\stoptime(x)}\varphi(t,x)
  \quad\forall x\in F\text{ with }\stoptime(x)<\infty,
\]
and $\evarphi(0,g):=g$ for all $g\in G$. The \emph{impact map} $p:\setdef{x\in X}{\stoptime(x)<\infty}\to G$ is given by $p(x):=\evarphi(\stoptime(x),x)$, and in particular, $p|_G=\id_G$.

\begin{definition}
  \label{def:hybrid-trajectory}
  A \emph{hybrid trajectory} from $x\in X$ consists of $J\in\Zp\cup\setof{\infty}$, a nondecreasing sequence $(t_n)_{n=0}^{J}\subseteq\Rp$ with $t_0=0$, and continuous maps $\gamma_n:I_n\to X,\quad 0\leq n\leq J$, satisfying the following conditions.
  \begin{enumerate}[(i)]
    \item $I_n=[t_n,t_{n+1}]$ for $n<J$. If $J<\infty$, then $I_J=[t_J,\infty)$.
    \item $\gamma_0(0)=x$ and $t_{n+1}=t_n+\stoptime(\gamma_n(t_n))$ for $n<J$. If $J<\infty$, then $\stoptime(\gamma_J(t_J))=\infty$.
    \item For $n<J$, $\gamma_n(t_{n+1})\in G$ and $\gamma_{n+1}(t_{n+1})=r(\gamma_n(t_{n+1}))$.
    \item For $n\leq J$ and $t\in I_n$,
    \[
      \gamma_n(t)=\evarphi(t-t_n,\gamma_n(t_n)).
    \]
  \end{enumerate}
  We write
  \[
    \dom(\psi_x):=\bigcup_{n=0}^{J}(I_n\times\setof{n}),\quad
    \psi_x(t,n):=\gamma_n(t).
  \]
\end{definition}
A hybrid trajectory $\psi_x$ is \emph{global} if $\bigcup_n I_n$ is unbounded, and is \emph{Zeno} if $J=\infty$ and $\sup_n t_n<\infty$. Each $x\in X$ determines a unique maximal trajectory. In both cases, $t+n$ is unbounded on $\dom(\psi_x)$, and the guard state and its reset image occur at their respective values of $t+n$. For $I\subseteq\Rp$ and $A\subseteq X$, write
\[
  \psi(I,A):=\setdef{\psi_x(t,n)}{x\in A,\ (t,n)\in\dom(\psi_x),\ t+n\in I}.
\]

Related systems are studied as impulsive dynamical systems or impulsive semidynamical systems~\cite{Bonotto:Gimenes:Souto}, where the impact time is the next positive time at which a trajectory meets the impulsive set, while $\stoptime(g)=0$ for $g\in G$ here.
\begin{definition}[{\cite{Kvalheim:Gustafson:Koditschek}}]
  \label{def:trapping-guard}
  The system $\cH$ satisfies the \emph{trapping guard condition} (TGC) if there is an open neighborhood $U_G$ of $G$ on which $\stoptime$ is finite and continuous and the extension of $\varphi$
  \[
    \evarphi:\setdef{(t,x)\in\Rp\times U_G}{t\leq\stoptime(x)}\to X
  \]
  is continuous.
\end{definition}
Under TGC, $p(x)=\evarphi(\stoptime(x),x)$ is continuous on $U_G$ and $p|_G=\id_G$, so $p|_{U_G}:U_G\to G$ is a retraction. Compactness of $X$ also implies that TGC has the following equivalent formulations.

\begin{proposition}
  \label{prop:trapping-guard-characterizations}
  For a hybrid system $\cH$, the following are equivalent.
  \begin{enumerate}[(i)]
    \item $\cH$ satisfies TGC.
    \item There is an open neighborhood $U$ of $G$ on which $\stoptime$ is finite and continuous and $p|_U$ is continuous.
    \item There is an open neighborhood $V$ of $G$ on which $\stoptime$ is finite and continuous and
    \[
      H:V\times[0,1]\to V,\quad H(x,s):=\evarphi(s\stoptime(x),x),
    \]
    is a strong deformation retraction of $V$ onto $G$.
  \end{enumerate}
\end{proposition}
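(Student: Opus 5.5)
The plan is to prove the cycle of implications (i)$\Rightarrow$(ii)$\Rightarrow$(iii)$\Rightarrow$(i), using compactness of $G$ in each step to shrink neighborhoods.

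\emph{(i)$\Rightarrow$(ii).} Take $U:=U_G$. Finiteness and continuity of $\stoptime$ hold by hypothesis. The map $p|_U$ is the composition $x\mapsto(\stoptime(x),x)\mapsto\evarphi(\stoptime(x),x)$. The first map is continuous into the domain of $\evarphi$ because $\stoptime$ is continuous. The second map is $\evarphi$ itself, which TGC makes continuous.

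\emph{(ii)$\Rightarrow$(iii).} This is the step I expect to be the main obstacle. First, $H$ must take values in $V$. Second, $H$ must be continuous at points where $\stoptime$ vanishes, that is, on $G$, while continuity of $\evarphi$ is not assumed.

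\begin{enumerate}[(1)]
  \item To get invariance, set $V:=\setdef{x\in U}{\evarphi([0,\stoptime(x)],x)\subseteq U}$. Along a trajectory segment ending at $p(x)$, the remaining impact time is $\stoptime(\evarphi(t,x))=\stoptime(x)-t$ by the semigroup property of the local semiflow. Hence $V$ is forward invariant along these segments, so $H(V\times[0,1])\subseteq V$.
  \item To see that $V$ is an open neighborhood of $G$, argue by compactness. Suppose $x_k\to x_0$ with $x_0\in G$, $x_k\in U$, and points $y_k=\evarphi(s_k,x_k)\notin U$ with $s_k\le\stoptime(x_k)\to0$. Then $y_k$ lies in the compact set $X\setminus U$, which stays a positive distance from $G$. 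We need $y_k\to x_0$, which is continuity of $\evarphi$ at $(0,x_0)$. I would obtain it as follows. Away from $G$, continuity of $\varphi$ on its open domain $\cU$ applies directly. Near $G$, observe that $\evarphi(s_k,x_k)=\evarphi(\stoptime(x_k)-(\stoptime(x_k)-s_k),x_k)$. If $d(y_k,G)\ge\delta$ along a subsequence, pass to a limit $y_*$ with $y_*$ in $F$. Then $\stoptime(y_k)=\stoptime(x_k)-s_k\to0$. But $\stoptime$ is positive at $y_*$, and $\varphi$ is defined on a neighborhood of $[0,\epsilon]\times\{y_*\}$, so $\stoptime\ge\epsilon/2$ near $y_*$. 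This is a contradiction, so $y_k$ approaches $G$ and $p(y_k)=p(x_k)\to x_0$. Continuity of $p$ then gives $y_k\to x_0$.
  \item The same argument gives continuity of $\evarphi$ on $\setdef{(t,x)}{x\in V,\ t\le\stoptime(x)}$. At interior times $t<\stoptime(x)$, use continuity of $\varphi$ on $\cU$. At $t=\stoptime(x)$, write $\evarphi(t_k,x_k)$ as $\evarphi$ applied to the point $\varphi(t_k-\eta,x_k)$ for small fixed $\eta$, and use the boundary-time estimate with $p$ continuous. Continuity of $H$ follows as a composition.
  \item Finally, $H(\cdot,0)=\id$, $H(\cdot,1)=p$ maps into $G$, and $H(g,s)=g$ for $g\in G$ since $\stoptime(g)=0$. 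So $H$ is a strong deformation retraction.
\end{enumerate}

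\emph{(iii)$\Rightarrow$(i).} Take $U_G:=V$. We must show $\evarphi$ is continuous on $\setdef{(t,x)\in\Rp\times V}{t\le\stoptime(x)}$. Where $\stoptime(x)>0$, write $\evarphi(t,x)=H(x,t/\stoptime(x))$. The map $(t,x)\mapsto t/\stoptime(x)$ is continuous there, so $\evarphi$ is continuous. At points $(0,g)$ with $g\in G$, take $(t_k,x_k)\to(0,g)$. Then $\evarphi(t_k,x_k)=H(x_k,s_k)$ for some $s_k\in[0,1]$, with the convention $s_k$ arbitrary when $\stoptime(x_k)=0$. By compactness of $[0,1]$ and continuity of $H$, every subsequential limit is $H(g,s)=g$. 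Hence $\evarphi(t_k,x_k)\to g$, which completes the cycle.
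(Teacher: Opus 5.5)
Your proof is correct in substance, but it orders the cycle differently from the paper, which proves (i)$\Rightarrow$(ii)$\Rightarrow$(i)$\Rightarrow$(iii)$\Rightarrow$(ii).

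\textbf{Comparison with the paper.} Your step (ii)$\Rightarrow$(iii) is the paper's (ii)$\Rightarrow$(i) followed by its (i)$\Rightarrow$(iii), merged into one step.
\begin{itemize}
  \item The (ii)$\Rightarrow$(i) part: accumulation points of $y_k=\evarphi(t_k,x_k)$ must lie in $G$, because $\cU$ is open at $\setof{0}\times F$. Then $p(y_k)=p(x_k)$ and continuity of $p$ fix the limit.
  \item The (i)$\Rightarrow$(iii) part: the same $V$, and the same identity $\stoptime(H(x,s))=(1-s)\stoptime(x)$.
\end{itemize}
The paper's ordering makes the return step trivial, since (iii)$\Rightarrow$(ii) is just $p=H(\cdot,1)$. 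Your direct (iii)$\Rightarrow$(i) is also valid. Off $G$ you write $\evarphi(t,x)=H(x,t/\stoptime(x))$, and at $(0,g)$ you use subsequential limits $H(g,s)=g$. This shows explicitly how continuity of $H$ encodes continuity of $\evarphi$, but it is extra work the paper avoids.

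\textbf{A step to reorder.} Your step (2) shows only that $V$ contains a neighborhood of each point of $G$. It does not show that $V$ is open at points of $V\setminus G$, which (iii) requires. Your step (3) proves continuity of $\evarphi$ only over $V$, so it cannot supply this without circularity. The fix is as follows.
\begin{itemize}
  \item First prove continuity of $\evarphi$ on all of $\setdef{(t,x)\in\Rp\times U}{t\le\stoptime(x)}$. The time shift by $\eta$ is unnecessary: $x_k\to x$ in $U$ already gives $p(y_k)=p(x_k)\to p(x)$, by continuity of $p$ on $U$.
  \item Then $H_0(x,s):=\evarphi(s\stoptime(x),x)$ is continuous on $U\times[0,1]$.
  \item Compactness of $[0,1]$ (the tube lemma) then makes $V=\setdef{x\in U}{H_0(\setof{x}\times[0,1])\subseteq U}$ open, exactly as in the paper.
\end{itemize}
With this reordering, the rest of your argument goes through unchanged.
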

\begin{proof}
We prove $(i)\Rightarrow(ii)\Rightarrow(i)\Rightarrow(iii)\Rightarrow(ii)$.

First, $(i)\Rightarrow(ii)$ follows immediately by continuity of $\stoptime$ and $\evarphi$ on $U_G$.

Conversely, suppose $(ii)$ holds on an open neighborhood $U_G$. It suffices to prove continuity of $\evarphi$ at the endpoint. Consider $(t_n,x_n)\to(\stoptime(x),x)$ in $D_U=\setdef{(t,x)\in\Rp\times U_G}{t\leq\stoptime(x)}$ and set $y_n:=\evarphi(t_n,x_n)$. Then $\stoptime(y_n)=\stoptime(x_n)-t_n\to0$. We claim that every accumulation point of $(y_n)$ lies in $G$. Indeed, if $y_n\to z\notin G$ along a subsequence, then openness of the local-semiflow domain at $(0,z)$ yields $\varepsilon>0$ and a neighborhood $W$ of $z$ such that every point in $W$ flows for at least time $\varepsilon$. Thus $\stoptime(y_n)\geq\varepsilon$ eventually, a contradiction.

By compactness of $X$, let $z\in G$ be an accumulation point of $(y_n)$. Since $U_G$ is a neighborhood of $G$, eventually $y_n\in U_G$; hence $p(y_n)=p(x_n)$. By continuity of $p$ and $p|_G=\id_G$,
\[
  z=p(z)=\lim p(y_n)=\lim p(x_n)=p(x).
\]
Thus $p(x)$ is the unique accumulation point of $(y_n)$, and compactness gives $y_n\to p(x)=\evarphi(\stoptime(x),x)$. Hence $\evarphi$ is continuous on $D_U$, proving $(ii)\Rightarrow(i)$.

Now assume $(i)$ and define $H_0(x,s):=\evarphi(s\stoptime(x),x)$ on a TGC neighborhood $U_G$. This map is continuous. Set $V:=\{x\in U_G:H_0(x,[0,1])\subseteq U_G\}$. Then $V$ is an open neighborhood of $G$. If $y=H_0(x,s)$ with $x\in V$, we have
\[
\stoptime(y)=(1-s)\stoptime(x),\qquad
H_0(y,u)=H_0(x,s+u(1-s)),
\]
so $H_0(V\times[0,1])\subseteq V$. Therefore $H:=H_0|_{V\times[0,1]}$ satisfies $H(x,0)=x$, $H(x,1)=p(x)\in G$, and $H(g,s)=g$ for $g\in G$. Thus $H$ is a strong deformation retraction of $V$ onto $G$, proving $(i)\Rightarrow(iii)$.

Finally, $(iii)\Rightarrow(ii)$ follows immediately from $p(x)=H(x,1)$.
\end{proof}
The following example shows why finiteness and continuity of $\stoptime$ near $G$ alone do not ensure TGC.
\begin{example}
  \label{ex:continuous-impact-time}
  Let $X=[0,1]^2$, $G=[0,1]\times\setof{0}$, and $r(x,0)=(x,1)$. On $F=X\setminus G$, define
  \[
    \varphi(t,(x,y)):=\left(x+(1-x)\frac{t}{y},\,y-t\right),\quad 0\leq t<y.
  \]
  Note that $\stoptime(x,y)=y$ on $X$, so the impact time is finite and continuous on $X$. For $y>0$, $\varphi(t,(x,y))\to(1,0)$ as $t\to\stoptime(x,y)$, so $H((x,y),1)=(1,0)$, while $H((x,0),1)=(x,0)$. Thus $H(\cdot,1)$ is discontinuous at every $(x,0)$ with $x<1$, and TGC fails.
\end{example}

We recall the hybrid suspension construction of~\cite{Kvalheim:Gustafson:Koditschek}. For the mapping cylinder $X':=X\sqcup_G(G\times[0,1])$ of $G\injection X$, write $\iota:X\injection X'$ for the inclusion and define $\pi_0:X'\to X$ by $\pi_0(\iota(x))=x$ and $\pi_0(g,s)=g$. The \emph{hybrid suspension space} is
\[
  \susp_{\cH}X
  :=\frac{X'}{(g,1)\sim\iota(r(g)),\ g\in G}
  \cong
  \frac{X\sqcup(G\times[0,1])}
  {(g,0)\sim g,\ (g,1)\sim r(g)}.
\]
Let $\pi:X'\to\susp_{\cH}X$ be the quotient map and write $\incl:=\pi\circ\iota:X\injection\susp_{\cH}X$.

The \emph{hybrid suspension semiflow} $\Phi_{\cH}:\Rp\times\susp_{\cH}X\to\susp_{\cH}X$ is defined recursively as follows. If $\stoptime(x)=\infty$, then $\Phi_{\cH}(T,\incl(x)):=\incl(\varphi(T,x))$. If $\stoptime(x)<\infty$, then
\[
  \Phi_{\cH}(T,\incl(x))=
  \begin{cases}
    \incl(\evarphi(T,x)), & 0\leq T\leq\stoptime(x),\\
    \pi(\evarphi(\stoptime(x),x),T-\stoptime(x)), & \stoptime(x)<T\leq\stoptime(x)+1,\\
    \Phi_{\cH}(T-\stoptime(x)-1,\incl(r(\evarphi(\stoptime(x),x)))), & T>\stoptime(x)+1.
  \end{cases}
\]
For $(g,s)\in G\times[0,1]$, define
\[
  \Phi_{\cH}(T,\pi(g,s))=
  \begin{cases}
    \pi(g,s+T), & 0\leq T\leq1-s,\\
    \Phi_{\cH}(T-(1-s),\incl(r(g))), & T>1-s.
  \end{cases}
\]
Figure~\ref{fig:hybrid-suspension} illustrates the construction.

\begin{figure}[htbp]
  \centering
  \tikzset{
    midarrow/.style={postaction={decorate},decoration={markings,mark=at position #1 with {\arrow{>}}}},
    suspension flow/.style={thick,blue!70!black,midarrow=0.5},
    suspension reset/.style={thick,dashed,red!70!black,midarrow=0.5}
  }
  \begin{subfigure}[t]{0.32\textwidth}
    \centering
    \begin{tikzpicture}[x=0.82cm,y=0.82cm,font=\small]
      \path[use as bounding box] (-0.2,-0.65) rectangle (5.45,4.1);
      \draw[thick,fill=gray!4] (0,0) .. controls (1.5,-0.3) and (3,0.3) .. (4.5,0) -- (5.25,1.9) .. controls (3.75,2.2) and (2.25,1.6) .. (0.75,1.9) -- cycle;
      \filldraw[fill=gray!25,thick] (3.6,1) ellipse [x radius=0.65,y radius=0.25];
      \node at (3.65,1.45) {$G$};
      \draw[suspension flow] (1.1,0.45) .. controls (1.8,0.15) and (2.9,0.3) .. (3.6,0.75);
      \draw[suspension flow] (1.25,1.2) .. controls (1.5,1.45) and (1.8,1.65) .. (2.2,1.55);
      \draw[suspension reset] (3.6,0.75) .. controls (2.9,1.3) and (2,1.35) .. (1.25,1.2);
      \fill (1.1,0.45) circle (1.5pt);
      \fill (3.6,0.75) circle (1.5pt);
      \fill (1.25,1.2) circle (1.5pt);
      \node[below] at (1.1,0.4) {$x$};
      \node[below right] at (3.6,0.75) {$g$};
      \node[below right] at (1.25,1.15) {$r(g)$};
      \node[blue!70!black] at (2.25,-0.4) {$\varphi$};
      \node[red!70!black] at (2.45,1.55) {$r$};
    \end{tikzpicture}
    \caption{$X$}
  \end{subfigure}\hfill
  \begin{subfigure}[t]{0.32\textwidth}
    \centering
    \begin{tikzpicture}[x=0.82cm,y=0.82cm,font=\small]
      \path[use as bounding box] (-0.2,-0.65) rectangle (5.45,4.1);
      \draw[thick,fill=gray!4] (0,0) .. controls (1.5,-0.3) and (3,0.3) .. (4.5,0) -- (5.25,1.9) .. controls (3.75,2.2) and (2.25,1.6) .. (0.75,1.9) -- cycle;
      \fill[gray!10] (2.95,1) -- (2.95,3.25) -- (4.25,3.25) -- (4.25,1) -- cycle;
      \draw[thick] (3.6,1) ellipse [x radius=0.65,y radius=0.25];
      \draw[thick] (2.95,1) -- (2.95,3.25) (4.25,1) -- (4.25,3.25);
      \filldraw[fill=gray!25,thick] (3.6,3.25) ellipse [x radius=0.65,y radius=0.25];
      \node at (3.6,3.8) {$G\times\{1\}$};
      \node at (5.03,2.1) {$s$};
      \draw[->,blue!70!black] (4.7,1.55) -- (4.7,2.6);
      \draw[suspension flow] (1.1,0.45) .. controls (1.8,0.15) and (2.9,0.3) .. (3.6,0.75);
      \draw[suspension flow] (3.6,0.75) -- (3.6,3);
      \draw[suspension reset] (3.6,3) .. controls (2.25,3.1) and (1.2,2.2) .. (1.25,1.2);
      \fill (1.1,0.45) circle (1.5pt);
      \fill (3.6,0.75) circle (1.5pt);
      \fill (3.6,3) circle (1.5pt);
      \fill (1.25,1.2) circle (1.5pt);
      \node[below] at (3.6,-0.1) {$(g,0)$};
      \node[left] at (2.85,3.25) {$(g,1)$};
      \node[below right] at (1.25,1.15) {$\iota(r(g))$};
    \end{tikzpicture}
    \caption{$X'$}
  \end{subfigure}\hfill
  \begin{subfigure}[t]{0.32\textwidth}
    \centering
    \begin{tikzpicture}[x=0.82cm,y=0.82cm,font=\small]
      \path[use as bounding box] (-0.2,-0.65) rectangle (5.45,4.1);
      \draw[thick,fill=gray!4] (0,0) .. controls (1.5,-0.3) and (3,0.3) .. (4.5,0) -- (5.25,1.9) .. controls (3.75,2.2) and (2.25,1.6) .. (0.75,1.9) -- cycle;
      \filldraw[fill=gray!25,thick] (3.6,1) ellipse [x radius=0.65,y radius=0.25];
      \draw[gray!50] (3.2,0.83) .. controls (4.2,3.6) and (0.6,3.5) .. (1,1.35);
      \draw[gray!50] (4,0.83) .. controls (4.8,4.1) and (0.35,3.8) .. (1.55,1.2);
      \draw[suspension flow] (1.1,0.45) .. controls (1.8,0.15) and (2.9,0.3) .. (3.6,0.75);
      \draw[suspension flow] (3.6,0.75) .. controls (4.5,3.8) and (0.45,3.65) .. (1.25,1.2);
      \draw[suspension flow] (1.25,1.2) .. controls (1.5,1.45) and (1.8,1.65) .. (2.2,1.55);
      \fill (1.1,0.45) circle (1.5pt);
      \fill (3.6,0.75) circle (1.5pt);
      \fill (1.25,1.2) circle (1.5pt);
      \node[below] at (1.1,-0.1) {$\incl(x)$};
      \node[below] at (3.8,-0.1) {$\incl(g)$};
      \node[below right] at (1.25,1.15) {$\incl(r(g))$};
      \node[blue!70!black] at (2.4,3.35) {$\pi(g,s)$};
      \node[blue!70!black] at (2.45,-0.4) {$\Phi_{\cH}$};
    \end{tikzpicture}
    \caption{$\susp_{\cH}X$}
  \end{subfigure}
  \caption{The suspension construction. We obtain $X'$ by attaching $G\times[0,1]$ along $G\times\{0\}$, and $\susp_{\cH}X$ by identifying $(g,1)$ with $\iota(r(g))$. The semiflow follows $\varphi$ in the base and moves along each attached interval in one unit of time.}
  \label{fig:hybrid-suspension}
\end{figure}

\begin{theorem}[{\cite[Proposition~6.9]{Kvalheim:Gustafson:Koditschek}}]
  \label{thm:hybrid-suspension-semiflow}
  The space $\susp_{\cH}X$ is compact and metrizable. If $\cH$ satisfies the trapping guard condition, then $\Phi_{\cH}$ is a continuous semiflow on $\susp_{\cH}X$.
\end{theorem}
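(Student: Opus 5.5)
The statement to prove is Theorem~\ref{thm:hybrid-suspension-semiflow}. The plan has three stages: topology of $\susp_{\cH}X$ first, then well-definedness of $\Phi_{\cH}$ as a semiflow, and finally continuity, which is where the trapping guard condition enters.

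\textbf{Compactness and metrizability.} The mapping cylinder $X'=X\sqcup_G(G\times[0,1])$ is compact, since it is a quotient of the compact space $X\sqcup(G\times[0,1])$. Hence $\susp_{\cH}X$, being a quotient of $X'$, is compact as well. For metrizability I would use the standard fact that a quotient of a compact metric space by a closed equivalence relation is compact Hausdorff and second countable, and therefore metrizable by Urysohn's metrization theorem. It then suffices to show that the relation
\[
  \setof{(g,1)\sim\iota(r(g))}
\]
generates a closed subset of $(X\sqcup G\times[0,1])^2$. This follows because its graph is essentially the graph of the continuous map $r$ on the closed set $G$, together with the identifications $(g,0)\sim g$. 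The equivalence classes are finite chains through $G\times\setof{0,1}$, and their transitive closure stays closed: a limit of related pairs is related, by continuity of $r$ and closedness of $G$.

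\textbf{Semiflow property.} Next I check that $\Phi_{\cH}$ is well defined and satisfies $\Phi_{\cH}(t+s,\cdot)=\Phi_{\cH}(t,\Phi_{\cH}(s,\cdot))$. The recursion is well founded, since each reset consumes one unit of time along the suspension interval. This means every finite time is reached after finitely many recursive steps, and this holds even for Zeno trajectories, because the suspended trajectory is not Zeno. Consistency on the identified points is also needed. At $(g,0)\sim\incl(g)$ we have $\stoptime(g)=0$, so the two formulas agree. At $(g,1)\sim\incl(r(g))$ both formulas continue by $\Phi_{\cH}(\cdot,\incl(r(g)))$. The semigroup law then reduces, piece by piece, to the local semiflow identity for $\varphi$ and to additivity of translation along $[0,1]$.

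\textbf{Continuity (main obstacle).} Continuity is the step where TGC is essential. I would first prove continuity of $\Phi_{\cH}$ on $[0,T_0]\times\susp_{\cH}X$ for small $T_0$, say $T_0\le 1/2$, and then extend to all times using the semigroup property, since compositions of continuous maps are continuous. Fix a point $z$ and a time $t$ and consider the possible cases.
\begin{enumerate}[(a)]
  \item If $z=\incl(x)$ with $x\notin\cl U_G$ for a suitable smaller guard neighborhood, then near $z$ the flow is just $\varphi$ for short times. This requires a uniform lower bound on $\stoptime$ away from $G$, which follows from openness of the domain of the local semiflow together with compactness.
  \item If $z$ lies in the interior of the cylinder, the flow is translation $\pi(g,s+T)$, which is continuous.
  \item If $z$ is near $\incl(G)$, that is, at points $\incl(x)$ with $x\in U_G$, I use the fact that $\stoptime$ and $\evarphi$ are continuous on $U_G$. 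For $x_n\to x$ the hitting times $\stoptime(x_n)\to\stoptime(x)$ and the hitting points $p(x_n)\to p(x)$. The orbit is then described by one continuous formula: follow $\evarphi$ up to $\stoptime(x)$, then traverse $\pi(p(x),T-\stoptime(x))$. This formula is jointly continuous in $(T,x)$ on $\setdef{(T,x)}{T\le\stoptime(x)+1}$, because the two pieces agree on $T=\stoptime(x)$ via $(g,0)\sim g$. Cylinder points near $(g,0)$ that come from $G\times[0,1]$ fit into the same description, with $\stoptime=0$.
  \item At the top of the cylinder, continuity follows from continuity of $r$ and the identification $(g,1)\sim\iota(r(g))$.
\end{enumerate}
Since every neighborhood basis of the quotient topology is generated by these pieces, sequential continuity, which suffices by metrizability, follows by pasting. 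Finite compositions of these short-time pieces then cover arbitrary $t$.

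The delicate point is case (c) together with the pasting in the quotient topology. In particular, one must show that sequences $\incl(x_n)$ approaching $\incl(g)$ from outside $U_G$ cannot occur, since $U_G$ is an open neighborhood of $G$. One must also show that the cylinder and base descriptions glue continuously. For this I would invoke Proposition~\ref{prop:trapping-guard-characterizations}(ii), which gives continuity of $p$ on a neighborhood of $G$.
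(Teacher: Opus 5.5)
The paper gives no argument for this statement. It imports it from \cite[Proposition~6.9]{Kvalheim:Gustafson:Koditschek}, so your proposal is a self-contained reconstruction, and as such it is sound. The metrizability step is fine. Each equivalence class is $\setof{x}$, together with $(x,0)$ if $x\in G$ and with $r^{-1}(x)\times\setof{1}$, so the relation is a finite union of closed sets.

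Two places in the continuity argument should be tightened.

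First, the admissible short time $T_0$ is dictated by case (a), not by the cylinder. Choose an open $V$ with $G\subseteq V\subseteq\cl V\subseteq U_G$. Let $\epsilon>0$ be a lower bound for $\stoptime$ on the compact set $X\setminus V\subseteq F$; it exists because openness of the domain makes $\stoptime$ lower semicontinuous and positive on $F$. Take $T_0<\min\setof{\epsilon,1}$. Then $\setof{U_G,\,X\setminus\cl V}$ is an open cover of $X$. For $T\le T_0$, $\Phi(T,\incl(x))$ is given by your pasted formula from case (c) on the first set, which needs $T\le\stoptime(x)+1$, and by $\incl(\varphi(T,x))$ on the second set, which needs $T<\stoptime(x)$.

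Second, the pasting in the quotient topology is cleanest done as follows. Let $Y:=X\sqcup(G\times[0,1])$ and let $q:Y\to\susp X$ be the quotient map. Then $\id\times q:[0,T_0]\times Y\to[0,T_0]\times\susp X$ is a continuous surjection from a compact space onto a Hausdorff space, hence closed, hence a quotient map.
\begin{itemize}
  \item It therefore suffices to check continuity of $(T,x)\mapsto\Phi(T,\incl(x))$ on $[0,T_0]\times X$ and of $(T,g,s)\mapsto\Phi(T,\pi(g,s))$ on $[0,T_0]\times G\times[0,1]$.
  \item The latter reduces to the former by pasting $\pi(g,s+T)$ on $\setof{s+T\le1}$ with $\Phi(T-(1-s),\incl(r(g)))$ on $\setof{s+T\ge1}$.
  \item This absorbs your cases (b) and (d) and removes the need for the neighborhood-basis discussion.
\end{itemize}

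Finally, the worry about sequences $\incl(x_n)\to\incl(g)$ arriving from outside $U_G$ is not delicate. The map $\incl$ is a continuous injection of a compact space into a Hausdorff space, hence an embedding, so $x_n\to g$ and $x_n\in U_G$ eventually.

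With these adjustments, the extension to all $t$ by composing time-$T_0$ maps on the closed slabs $[kT_0,(k+1)T_0]\times\susp X$ completes the proof as you describe.
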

\noindent 
For fixed $\cH$, write $\susp X:=\susp_{\cH}X$ and $\Phi:=\Phi_{\cH}$, and define $\susp:\cP(X)\to\cP(\susp X)$ by $\susp(A):=\pi\left(\pi_0^{-1}(A)\right)$.

\begin{proposition}[{\cite[Lemmas~3.5--3.7 and~3.9]{Rivas:Kalies}}]
  \label{prop:suspension-identities}
  The suspension satisfies the following:
  \begin{enumerate}[(i)]
    \item For every $A\subseteq X$, $\incl^{-1}\left(\susp(A)\right)=A\cup r(A\cap G)$.
    \item For every $\tB\subseteq\susp X$,
    \[
      \susp\left(\incl^{-1}(\tB)\right)
      =\left(\tB\cap\incl(X)\right)\cup\pi\left(\incl^{-1}\left(\tB\cap\incl(G)\right)\times(0,1]\right).
    \]
    \item For every hybrid trajectory $\psi_x$ and every $(t,n)\in\dom(\psi_x)$, $\incl\left(\psi_x(t,n)\right)=\Phi(t+n,\incl(x))$.
    \item For every $A\subseteq X$, $\susp\left(\psi([0,\infty),A)\right)=\Phi\left([0,\infty),\susp(A)\right)$.
  \end{enumerate}
\end{proposition}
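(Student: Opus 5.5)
The plan is to prove the four identities in order, working directly from the definitions of $\pi$, $\pi_0$, $\incl$ and the recursive formula for $\Phi$. Item (iii) is the dynamical bridge used in (iv).

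\emph{Identities (i) and (ii).} First record which points $\pi$ identifies. In $X'$, the point $(g,0)$ is already $\iota(g)$. The quotient $\pi$ additionally identifies only $(g,1)$ with $\iota(r(g))$. In particular, a point $\pi(g,s)$ with $s\in(0,1)$ never lies in $\incl(X)$.

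For (i), write $\pi_0^{-1}(A)=\iota(A)\cup\big((A\cap G)\times[0,1]\big)$. Applying $\pi$ and intersecting with $\incl(X)$ leaves three kinds of points:
\begin{itemize}
  \item $\incl(A)$;
  \item the points $\pi(g,0)=\incl(g)$, which are already in $\incl(A)$;
  \item the points $\pi(g,1)=\incl(r(g))$ for $g\in A\cap G$.
\end{itemize}
Since $\incl$ is injective, this gives $\incl^{-1}(\susp(A))=A\cup r(A\cap G)$.

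For (ii), set $A=\incl^{-1}(\tB)$, so that $\susp(A)=\incl(A)\cup\pi\big((A\cap G)\times[0,1]\big)$. Here $\incl(A)=\tB\cap\incl(X)$ and $A\cap G=\incl^{-1}(\tB\cap\incl(G))$. The $s=0$ slice of the cylinder term is absorbed in $\incl(A)$, which leaves $(0,1]$ as stated.

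\emph{Identity (iii).} I would induct on the jump index $n$. For $n=0$ and $t\in I_0=[0,t_1]$, the first case of the recursion gives $\Phi(t,\incl(x))=\incl(\evarphi(t,x))=\incl(\gamma_0(t))$.

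Now assume the identity on $I_n\times\{n\}$, and let $g=\gamma_n(t_{n+1})\in G$. The cylinder case of the recursion gives $\Phi(t_{n+1}+n+s,\incl(x))=\pi(g,s)$ for $s\in[0,1]$. At $s=1$ this equals $\incl(r(g))=\incl(\gamma_{n+1}(t_{n+1}))$. The semigroup property of $\Phi$ then gives, for $t\in I_{n+1}$,
\[
  \Phi(t+n+1,\incl(x))=\Phi\big(t-t_{n+1},\incl(\gamma_{n+1}(t_{n+1}))\big)=\incl(\gamma_{n+1}(t)).
\]
The induction uses only finitely many jumps for each $(t,n)$, so Zeno trajectories cause no difficulty.

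\emph{Identity (iv).} For "$\subseteq$", take $y=\psi_x(t,n)$ with $x\in A$. By (iii), $\incl(y)=\Phi(t+n,\incl(x))$. If $y\in G$, the remaining points of $\susp(\{y\})$ are $\pi(y,s)=\Phi(s,\incl(y))$, which are again forward images of $\incl(x)\in\susp(A)$.

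For "$\supseteq$", first reduce to base points. A point of $\susp(A)$ is either $\incl(x)$ with $x\in A$, or $\pi(g,s)$ with $g\in A\cap G$. In the second case $\Phi(T,\pi(g,s))=\Phi(T+s,\incl(g))$ and $g\in A$, so it suffices to treat $\Phi(T,\incl(x))$ with $x\in A$.

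Next decompose $T$ along the hybrid trajectory $\psi_x$. Either $T=t+n$ for some $(t,n)\in\dom(\psi_x)$, and then $\Phi(T,\incl(x))=\incl(\psi_x(t,n))$ by (iii). Or $T\in\big(t_{n+1}+n,\,t_{n+1}+n+1\big)$, and then $\Phi(T,\incl(x))=\pi(g,s)$ with $g=\psi_x(t_{n+1},n)\in G$. In the second case the point lies in $\susp\big(\psi([0,\infty),A)\big)$ because $g\in\psi([0,\infty),A)\cap G$.

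\emph{Main obstacle.} The delicate step is showing that these two cases exhaust all $T\geq0$. This needs $t+n$ to be unbounded on $\dom(\psi_x)$, including for Zeno and finite-jump trajectories, which is noted after Definition~\ref{def:hybrid-trajectory}. It also needs that the real intervals $\{t+n : t\in I_n\}$ and the unit gaps between them tile $[0,\infty)$. I would verify this tiling explicitly by comparing $\Phi(\cdot,\incl(x))$ with the recursion.
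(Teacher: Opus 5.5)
Your proof is correct. The paper gives no argument of its own for this proposition; it imports all four identities from Rivas--Kalies. So there is no in-paper route to compare against, and your direct verification from the definitions of $\pi_0$, $\pi$, $\incl$ and the recursion for $\Phi$ is the natural one.

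A few points to tidy up:
\begin{enumerate}[(a)]
  \item The semigroup property you invoke in (iii) and (iv) is supplied in the paper by Theorem~\ref{thm:hybrid-suspension-semiflow} under the trapping guard condition. It also holds for the recursively defined $\Phi$ without any continuity, by induction on the recursion, since the third branch is just a restart at $\incl(r(p(x)))$. Specific instances such as $\Phi(T,\pi(g,s))=\Phi(T+s,\incl(g))$ can be read off the two branches directly.
  \item The base case of (iii) should also cover $J=0$. There $\stoptime(x)=\infty$ and the other branch of the recursion applies.
  \item The step you single out as the main obstacle is immediate. For $n<J$ one has $\setdef{t+n}{t\in I_n}=[t_n+n,\,t_{n+1}+n]$, and consecutive such intervals are separated by open gaps of length exactly one. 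If $J=\infty$, then $t_n+n\geq n\to\infty$. If $J<\infty$, then $I_J=[t_J,\infty)$ by Definition~\ref{def:hybrid-trajectory}(i). Hence these intervals and gaps partition $[0,\infty)$.
\end{enumerate}
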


\section{Attractor Lattices for Hybrid Systems}
\label{sec:hybrid-lattices}

Fix a hybrid dynamical system $\cH=(X,\varphi,G,r)$.

\begin{definition}
  \label{def:hybrid-omega}
  For $U\subseteq X$, define its $\omega$-limit under $\cH$ by
  \[
    \omega(U,\cH):=\bigcap_{T\geq0}\cl\,\left(\psi([T,\infty),U)\right).
  \]
  For $x\in X$, we abbreviate $\omega(x,\cH):=\omega(\setof{x},\cH)$.

  A set $S\subseteq X$ is \emph{forward invariant} under $\cH$ if $\psi([0,\infty),S)\subseteq S$.
\end{definition}

\begin{definition}
  \label{def:hybrid-trapping}
  A set $U\subseteq X$ is a \emph{trapping region} for $\cH$ if it is forward invariant under $\cH$ and $\omega(U,\cH)\subseteq\Int_X(U)$.
\end{definition}

\begin{definition}
  \label{def:hybrid-attractor}
  A set $U\subseteq X$ is an \emph{attracting neighborhood} for $\cH$ if $\omega(U,\cH)\subseteq\Int(U)$. A compact set $A\subseteq X$ is an \emph{attractor} for $\cH$ if $A=\omega(U,\cH)$ for some attracting neighborhood $U \subseteq X$.
\end{definition}
\noindent
We write $\sANbhd(X,\cH)$ for the attracting neighborhoods, $\sANbhdR(X,\cH):=\sANbhd(X,\cH)\cap\scR(X)$ for the regular closed attracting neighborhoods, and $\sAtt(\cH)$ for the attractors.

Without TGC, an $\omega$-limit can fail to be forward invariant, as the following example shows.
\begin{example}
  Let $X=[-1,0]\cup[1,2]$, let $G=\setof{0,2}$, and define $r(0)=1$ and $r(2)=-1$. The local semiflow on $F=X\setminus G$ is given by
  \[
    \varphi(t,x):=
    \begin{cases}
      e^{-t}x, & x\in[-1,0),\ t\geq0,\\
      x+t, & x\in[1,2),\ 0\leq t<2-x.
    \end{cases}
  \]
  Note that every neighborhood of $G$ contains points with $\stoptime(x)=\infty$ even though $\stoptime(0)=0$, so TGC fails. Note that $\omega(x,\cH)=\setof{0}$ for every $x \in X$, but $r(0)=1$.
\end{example}

In what follows, assume that $\cH$ satisfies the trapping guard condition.

\subsection{\texorpdfstring{$\omega$}{Omega}-limits and invariant sets}

We recall the following results relating $\omega$-limits and invariant subsets of $X$ and $\susp X$.
\begin{proposition}[{\cite[Proposition~6.11]{Kvalheim:Gustafson:Koditschek}}]
  \label{prop:omega-preimage-identity}
  For every $A\subseteq X$,
  \[
    \omega(A,\cH)=\incl^{-1}\left(\omega(\incl(A),\Phi)\right).
  \]
\end{proposition}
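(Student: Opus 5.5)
The plan is to prove the two inclusions of $\omega(A,\cH)=\incl^{-1}(\omega(\incl(A),\Phi))$ separately. The main tool is Proposition~\ref{prop:suspension-identities}(iii): for every hybrid trajectory, $\incl(\psi_x(t,n))=\Phi(t+n,\incl(x))$. Applying it to every $x\in A$ and every $(t,n)\in\dom(\psi_x)$ with $t+n\geq T$ gives
\[
  \incl\bigl(\psi([T,\infty),A)\bigr)\subseteq\Phi\bigl([T,\infty),\incl(A)\bigr)\cap\incl(X).
\]
Conversely, every point of $\Phi([T,\infty),\incl(A))$ that lies in $\incl(X)$ is of the form $\incl(\psi_x(t,n))$ with $t+n\geq T$. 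To see this, recall that $\Phi$ leaves the base only along the attached intervals $\pi(\{g\}\times(0,1))$ and re-enters through $\incl(r(g))$. With the time convention of the suspension, the reset in $\psi$ happens at exactly the same value of $t+n$. Granting this, $\incl(\psi([T,\infty),A))=\Phi([T,\infty),\incl(A))\cap\incl(X)$ for every $T\geq0$.

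For the inclusion ``$\subseteq$'', take $y\in\omega(A,\cH)$. For every $T$ there are points $y_k\in\psi([T,\infty),A)$ with $y_k\to y$. Since $\incl$ is continuous, $\incl(y_k)\to\incl(y)$, and $\incl(y_k)$ lies in $\Phi([T,\infty),\incl(A))$. Hence $\incl(y)$ lies in the closure of that set for every $T$, so $y\in\incl^{-1}(\omega(\incl(A),\Phi))$.

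For the inclusion ``$\supseteq$'', take $y$ with $\incl(y)\in\omega(\incl(A),\Phi)$. Fix $T$. We get $z_k=\Phi(s_k,\incl(x_k))\to\incl(y)$ with $s_k\geq T$ and $x_k\in A$, but the $z_k$ may lie in the open cylinder $\pi(G\times(0,1))$ rather than in $\incl(X)$. This is where I expect the main obstacle. I would handle it with the following cases, after passing to a subsequence.
\begin{enumerate}[(a)]
  \item If $z_k\in\incl(X)$ infinitely often, the identity above gives $z_k=\incl(y_k)$ with $y_k\in\psi([T,\infty),A)$. We then need $y_k\to y$ in $X$. I would derive this from $\incl$ being a topological embedding, which holds because $\incl$ is a continuous injection from the compact space $X$ into the Hausdorff space $\susp X$ (Theorem~\ref{thm:hybrid-suspension-semiflow}).
  \item If $z_k=\pi(g_k,u_k)$ with $u_k\in(0,1)$, I would use the description of neighborhoods in the quotient. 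Convergence $\pi(g_k,u_k)\to\incl(y)$ forces, along a subsequence, either $u_k\to0$ with $g_k\to y$, or $u_k\to1$ with $r(g_k)\to y$. The limit cannot have $u_k$ bounded away from $0$ and $1$, since then it would lie in the open cylinder, not in $\incl(X)$. If $u_k\to0$, the earlier point $\incl(g_k)=\Phi(s_k-u_k,\incl(x_k))$ lies in $\incl(\psi([T-1,\infty),A))$. If $u_k\to1$, the later point $\incl(r(g_k))=\Phi(s_k+1-u_k,\incl(x_k))$ lies in $\incl(\psi([T,\infty),A))$.
\end{enumerate}
In both cases $y$ lies in $\cl\psi([T-1,\infty),A)$. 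Since $T$ is arbitrary and the sets are nested, $y\in\omega(A,\cH)$.

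The delicate points are in case (b). First, justifying the quotient-neighborhood claim requires a careful look at the topology of $\susp X$: a neighborhood of $\incl(y)$ pulls back to neighborhoods of $(g,0)$ for $g=y\in G$ and of $(g,1)$ for $r(g)=y$. Compactness of $G\times[0,1]$ then gives convergent subsequences. Second, the identity $\incl(\psi(I,A))=\Phi(I,\incl(A))\cap\incl(X)$ needs Zeno trajectories treated correctly. Here I would invoke Proposition~\ref{prop:suspension-identities}(iv), intersected with $\incl(X)$ via part (ii), for $I=[T,\infty)$ after replacing $A$ by the set of its time-$T$ states.
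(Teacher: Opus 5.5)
The paper does not prove this statement; it imports it from \cite[Proposition~6.11]{Kvalheim:Gustafson:Koditschek}, so there is no internal proof to compare against. Your two-inclusion argument is a correct, self-contained proof. The ``$\subseteq$'' direction follows from Proposition~\ref{prop:suspension-identities}(iii) and continuity of $\incl$. In ``$\supseteq$'', limits that lie in $\incl(X)$ pull back because $\incl$ is a continuous injection of a compact space into the Hausdorff space $\susp X$. Limits approached through the open fibers are replaced by the fiber endpoints $\incl(g_k)$ or $\incl(r(g_k))$, which shifts $T$ by at most one. A side benefit of your route is that it never uses continuity of $\Phi$. It needs only this bookkeeping, continuity of $\incl$, $\pi$ and $r$, and compact metrizability of $\susp X$. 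So the identity does not in fact depend on the trapping guard condition, which is consistent with the paper's non-TGC example.

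Two steps can be tightened.

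\emph{The bookkeeping identity.} The equality $\incl(\psi([T,\infty),A))=\Phi([T,\infty),\incl(A))\cap\incl(X)$ follows directly from the recursive definition of $\Phi$. Let $t_n$ be the jump times of $\psi_x$. Then $\Phi(s,\incl(x))=\incl(\gamma_n(s-n))$ for $s\in[t_n+n,t_{n+1}+n]$. For $s\in(t_{n+1}+n,t_{n+1}+n+1)$, the point lies in the open fiber $\pi(\setof{\gamma_n(t_{n+1})}\times(0,1))$, which is disjoint from $\incl(X)$. These intervals cover $[0,\infty)$ because $t_n+n\to\infty$ when $J=\infty$, and that is all Zeno trajectories require. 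Your alternative via Proposition~\ref{prop:suspension-identities}(iv) and ``time-$T$ states'' is unnecessary and does not work as stated. At suspension time $T$ an orbit may be inside an open fiber, where it has no hybrid state.

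\emph{Case (b).} You do not need to describe neighborhoods in the quotient. Pass to a subsequence with $(g_k,u_k)\to(g,u)$ in the compact set $G\times[0,1]$. Continuity of $\pi$ and uniqueness of limits give $\pi(g,u)=\incl(y)$. This forces $u\in\setof{0,1}$, because the classes of $(g,u)$ with $0<u<1$ are singletons. Injectivity of $\incl$ then gives $g=y$ or $r(g)=y$, and continuity of $r$ gives $r(g_k)\to y$ in the second case.
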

\begin{lemma}[{\cite[Lemma~3.10]{Rivas:Kalies}}]
  \label{lem:invariance-implies-fiber}
  Let $\tS\subseteq\susp X$ be invariant under $\Phi$. If $\pi(g,s)\in \tS$ for some $g\in G$ and $0\leq s<1$, then $\pi\left(\setof{g}\times[0,1]\right)\subseteq \tS$.
\end{lemma}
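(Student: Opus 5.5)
The plan is to use both halves of invariance, $\Phi(t,\tS)=\tS$: the inclusion $\Phi(t,\tS)\subseteq\tS$ gives the part of the fiber above $s$, and the inclusion $\tS\subseteq\Phi(t,\tS)$ gives the part below $s$.

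\emph{Upper part of the fiber.} Let $\pi(g,s)\in\tS$ with $0\le s<1$. For $0\le T\le 1-s$, the definition of $\Phi$ on $G\times[0,1]$ gives $\Phi(T,\pi(g,s))=\pi(g,s+T)$. Forward invariance then yields $\pi(g,[s,1])\subseteq\tS$. If $s=0$ this already covers the whole fiber, so from now on assume $0<s<1$.

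\emph{Lower part of the fiber.} Invariance with $t=s$ gives some $y\in\tS$ with $\Phi(s,y)=\pi(g,s)$. The key claim is that the only point sent to $\pi(g,s)$ at time $s$ is $y=\pi(g,0)=\incl(g)$. Granting this, for each $u\in[0,s]$ we get $\pi(g,u)=\Phi(u,\incl(g))=\Phi(u,y)\in\Phi(u,\tS)=\tS$. Combined with the upper part, this gives $\pi(\setof{g}\times[0,1])\subseteq\tS$.

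\emph{The key claim.} I expect this to be the main step. I will argue by cases on the form of $y$, using the explicit recursive definition of $\Phi$ and the presentation
\[
  \susp X\cong\bigl(X\sqcup(G\times[0,1])\bigr)/\!\sim .
\]
A point $\pi(g,s)$ with $0<s<1$ is identified with nothing else. The formulas for $\Phi$ show that a trajectory lies in $\pi(G\times(0,1))$ only during an interval of the form $(T_0,T_0+1)$ on which it equals $\pi(g',T-T_0)$, where $T_0$ is a time at which the trajectory sits at $\incl(g')$ and starts up the fiber over $g'$.

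\emph{Case $y=\incl(x)$.} If $\stoptime(x)=\infty$, the trajectory stays in $\incl(X)$, which is impossible. Otherwise, the fiber segments are entered at the times $\stoptime(x)$, $\stoptime(x)+1+\stoptime(\cdot)$, and so on, each starting at the bottom of the fiber. Landing at height $s$ exactly at time $s$ forces the entry time to be $T_0=0$. Hence $\stoptime(x)=0$, so $x\in G$ and the fiber entered is the one over $p(x)=x$. Matching points then gives $x=g$.

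\emph{Case $y=\pi(g',s')$ with $0<s'<1$.} The trajectory first climbs to height $1$, which takes time $1-s'$, before it can enter any new fiber. A later fiber entry therefore happens at a time $T_0\ge 1-s'>0$. Reaching height $s$ at time $s$ would then require $T_0=0$, which is impossible. Staying on the same fiber gives $\pi(g',s'+s)=\pi(g,s)$, which would need $s'=0$, again impossible.

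\emph{Care with identifications.} Points with $s'=0$ or $s'=1$ are already covered by the base case $y=\incl(x)$, via $(g',0)\sim g'$ and $(g',1)\sim r(g')$. This disposes of the remaining possibilities and proves the claim.
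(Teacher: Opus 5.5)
Your proof is correct and complete. Forward invariance gives $\pi(\setof{g}\times[s,1])\subseteq\tS$. Backward invariance at time $s$, combined with your observation that $\incl(g)$ is the only time-$s$ preimage of $\pi(g,s)$ when $0<s<1$, gives the lower segment via $\pi(g,u)=\Phi(u,\incl(g))$. One step is implicit: ``$\stoptime(x)=0$, so $x\in G$'' uses that $\stoptime>0$ on $F$, which follows from openness of the local-semiflow domain $\cU$.

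The paper gives no proof of its own here; it imports the lemma from Rivas and Kalies (Lemma~3.10 there). Your direct case analysis of the recursive definition of $\Phi$ therefore serves as a natural self-contained proof.
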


Applying~\cite[Propositions~3.11--3.12]{Rivas:Kalies} to $\omega$-limits yields the following identities.
\begin{lemma}
  \label{lem:omega-preimage}
  For $\tU\subseteq\susp X$, define $\tK:=\omega(\tU,\Phi)$ and $K:=\incl^{-1}(\tK)$. Then $K$ is forward invariant under $\cH$ and
  \[
    \tK=\susp(K),
    \quad
    \omega(\incl(K),\Phi)=\tK,
    \quad
    \omega(K,\cH)=K.
  \]
\end{lemma}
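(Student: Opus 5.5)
We need to prove Lemma~\ref{lem:omega-preimage}: for $\tK=\omega(\tU,\Phi)$ and $K=\incl^{-1}(\tK)$, the set $K$ is forward invariant under $\cH$, and $\tK=\susp(K)$, $\omega(\incl(K),\Phi)=\tK$, $\omega(K,\cH)=K$.

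The plan rests on two facts: $\tK$ is compact and invariant under $\Phi$ (since it is an $\omega$-limit of a continuous semiflow on a compact space), together with the fiber structure from Lemma~\ref{lem:invariance-implies-fiber}.

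\emph{Step 1: $\tK=\susp(K)$.} We use Proposition~\ref{prop:suspension-identities}(ii):
\[
\susp(K)=(\tK\cap\incl(X))\cup\pi\bigl(\incl^{-1}(\tK\cap\incl(G))\times(0,1]\bigr).
\]
For $\supseteq$, take $\tilde y\in\tK$. If $\tilde y\in\incl(X)$, it lies in the first piece. Otherwise $\tilde y=\pi(g,s)$ with $0<s<1$. Lemma~\ref{lem:invariance-implies-fiber} then gives $\pi(\{g\}\times[0,1])\subseteq\tK$, so $\incl(g)=\pi(g,0)\in\tK$, and $\tilde y$ lies in the second piece.

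For $\subseteq$, take $\incl(g)\in\tK$ with $g\in G$. Invariance gives $\tK=\Phi(T,\tK)$, so we must show that the whole fiber $\pi(\{g\}\times(0,1])$ lies in $\tK$. Forward images give $\pi(g,s)=\Phi(s,\incl(g))\in\tK$ for $s\in(0,1]$. (Lemma~\ref{lem:invariance-implies-fiber} with $s=0$ also yields this directly.)

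\emph{Step 2: forward invariance of $K$.} Let $x\in K$. By Proposition~\ref{prop:suspension-identities}(iii), $\incl(\psi_x(t,n))=\Phi(t+n,\incl(x))$. This lies in $\tK$ because $\tK$ is forward invariant under $\Phi$. Hence $\psi_x(t,n)\in K$.

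\emph{Step 3: $\omega(\incl(K),\Phi)=\tK$.} Since $\incl(K)\subseteq\tK$ and $\tK$ is compact and invariant, we have $\omega(\incl(K),\Phi)\subseteq\omega(\tK,\Phi)=\tK$, using idempotence of $\omega$.

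For the reverse inclusion, the key observation is that every point of $\tK$ is a forward image of a point of $\incl(K)$ at time at most $1$. Combined with invariance, this is enough. Take $\tilde y\in\tK$ and $T\ge0$. By invariance, $\tilde y=\Phi(T+1,\tilde z)$ for some $\tilde z\in\tK$. By Step 1, $\tilde z=\incl(k)$ with $k\in K$, or $\tilde z=\pi(g,s)$ with $\incl(g)\in\tK$. In the second case $\tilde z=\Phi(s,\incl(g))$, so $\tilde y=\Phi(T+1+s,\incl(g))$ with $g\in K$. Either way $\tilde y\in\Phi([T,\infty),\incl(K))$ for every $T$, and therefore $\tilde y\in\omega(\incl(K),\Phi)$.

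\emph{Step 4: $\omega(K,\cH)=K$.} By Proposition~\ref{prop:omega-preimage-identity} and Step 3,
\[
\omega(K,\cH)=\incl^{-1}(\omega(\incl(K),\Phi))=\incl^{-1}(\tK)=K.
\]

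The main subtlety is Step 1, specifically handling fiber points with $s\in(0,1)$ via Lemma~\ref{lem:invariance-implies-fiber}. The endpoint $s=1$ needs its own check: $\pi(g,1)=\incl(r(g))$, which lies in the base, so that case is already covered by the first piece.
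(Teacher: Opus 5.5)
Your proof is correct and follows essentially the same route as the paper. You get forward invariance of $K$ from Proposition~\ref{prop:suspension-identities}(iii), prove $\tK=\susp(K)$ using Lemma~\ref{lem:invariance-implies-fiber} together with $\pi(g,s)=\Phi(s,\incl(g))$, obtain $\tK\subseteq\omega(\incl(K),\Phi)$ by writing each point of $\tK$ as a time-$\geq T$ image of a point of $\incl(K)$, and conclude with Proposition~\ref{prop:omega-preimage-identity}; the only differences are cosmetic (you expand $\susp(K)$ via Proposition~\ref{prop:suspension-identities}(ii), and you get $\omega(\incl(K),\Phi)\subseteq\tK$ from monotonicity and idempotence rather than from $\tK$ being closed and forward invariant).
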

\begin{proof}
By Theorem~\ref{thm:hybrid-suspension-semiflow}, $\Phi$ is a continuous semiflow on the compact metrizable space $\susp X$, so $\tK$ is compact and invariant under $\Phi$. For $x\in K$, Proposition~\ref{prop:suspension-identities}(iii) yields $\incl(\psi_x(t,n))=\Phi(t+n,\incl(x))\in\tK$ for every $(t,n)\in\dom(\psi_x)$, so $K$ is forward invariant under $\cH$.

Since $\stoptime(g)=0$ for every $g\in G$, the definition of $\Phi$ implies
\[
  \pi(g,s)=\Phi(s,\incl(g))\quad\forall(g,s)\in G\times[0,1].
\]
By definition, $\susp(K)=\incl(K)\cup\pi\left((K\cap G)\times(0,1]\right)$, therefore
\[
  \susp(K)\subseteq\Phi([0,1],\incl(K))\subseteq\tK,
\]
and the second follows from $\incl(K)\subseteq\tK$ and the forward invariance of $\tK$. Conversely, $\susp X=\incl(X)\cup\pi\left(G\times(0,1)\right)$ and $\tK\cap\incl(X)=\incl(K)$. If $\pi(g,s)\in\tK$ with $g\in G$ and $0<s<1$, then Lemma~\ref{lem:invariance-implies-fiber} yields $\incl(g)=\pi(g,0)\in\tK$, so $g\in K\cap G$ and $\pi(g,s)\in\susp(K)$. Therefore $\tK=\susp(K)$.

Since $\incl(K)\subseteq\tK$ and $\tK$ is closed and forward invariant, $\omega(\incl(K),\Phi)\subseteq\cl\,\Phi([0,\infty),\incl(K))\subseteq\tK$. Conversely, the invariance of $\tK$ and the inclusion $\susp(K)\subseteq\Phi([0,1],\incl(K))$ imply
\[
  \tK=\Phi(T,\tK)=\Phi(T,\susp(K))\subseteq\Phi([T,T+1],\incl(K))\subseteq\Phi([T,\infty),\incl(K))\quad\forall T\geq0,
\]
so $\tK\subseteq\omega(\incl(K),\Phi)$. Finally, Proposition~\ref{prop:omega-preimage-identity} yields
\[
  \omega(K,\cH)=\incl^{-1}\left(\omega(\incl(K),\Phi)\right)=\incl^{-1}(\tK)=K.
\]
\end{proof}

\subsection{The lattice operations}

\begin{proposition}
  \label{prop:hybrid-omega-calculus}
  For all $U,V\subseteq X$,
  \[
    \omega(U\cup V,\cH)=\omega(U,\cH)\cup\omega(V,\cH),
    \quad
    \omega(U\cap V,\cH)\subseteq\omega(U,\cH)\cap\omega(V,\cH).
  \]
  If $U\subseteq V$, then $\omega(U,\cH)\subseteq\omega(V,\cH)$. For every $U\subseteq X$, the set $\omega(U,\cH)$ is compact and forward invariant under $\cH$, and $\omega\left(\omega(U,\cH),\cH\right)=\omega(U,\cH)$.
\end{proposition}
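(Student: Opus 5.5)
The plan is to transfer every statement to the suspension semiflow $\Phi$ through Proposition~\ref{prop:omega-preimage-identity}, which gives $\omega(U,\cH)=\incl^{-1}(\omega(\incl(U),\Phi))$ for all $U\subseteq X$. For a continuous semiflow on the compact metrizable space $\susp X$ (Theorem~\ref{thm:hybrid-suspension-semiflow}), the corresponding identities are the classical ones recalled in Section~\ref{subsec:lattices}. These are $\omega(\tU\cup\tV,\Phi)=\omega(\tU,\Phi)\cup\omega(\tV,\Phi)$, monotonicity, and $\omega(\tU\cap\tV,\Phi)\subseteq\omega(\tU,\Phi)\cap\omega(\tV,\Phi)$. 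Union additivity can also be checked directly, since $\cl\Phi([T,\infty),\tU\cup\tV)$ is the union of the two closures and the intersection over $T$ of a decreasing union of two decreasing families is the union of the intersections. Since $\incl$ is injective, we have $\incl(U\cup V)=\incl(U)\cup\incl(V)$ and $\incl(U\cap V)=\incl(U)\cap\incl(V)$, and $\incl^{-1}$ commutes with unions and intersections. The first two identities and monotonicity therefore follow at once. Alternatively, monotonicity and union additivity can be read off directly from Definition~\ref{def:hybrid-omega}, because $\psi([T,\infty),\cdot)$ preserves unions and inclusions and closure commutes with finite unions.

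For compactness and forward invariance, set $\tU:=\incl(U)$, $\tK:=\omega(\tU,\Phi)$ and $K:=\incl^{-1}(\tK)=\omega(U,\cH)$. Lemma~\ref{lem:omega-preimage} then gives directly that $K$ is forward invariant under $\cH$ and that $\omega(K,\cH)=K$, which is idempotence. It remains to show that $K$ is compact. Since $X$ is compact, it suffices to show $K$ is closed. Here $\tK$ is closed in $\susp X$ and $\incl=\pi\circ\iota$ is continuous, so $K=\incl^{-1}(\tK)$ is closed in $X$. Alternatively, closedness is immediate from Definition~\ref{def:hybrid-omega}, since $\omega(U,\cH)$ is an intersection of closures.

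No step is a serious obstacle, since all the work is packaged in Proposition~\ref{prop:omega-preimage-identity} and Lemma~\ref{lem:omega-preimage}. The only point needing care is the applicability of Lemma~\ref{lem:omega-preimage}, which is stated for arbitrary $\tU\subseteq\susp X$. We apply it with $\tU=\incl(U)$ and check that its $K$ coincides with $\omega(U,\cH)$, again by Proposition~\ref{prop:omega-preimage-identity}.
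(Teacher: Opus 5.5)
Your proposal is correct and follows essentially the same route as the paper. Like the paper, you transfer the union identity, monotonicity, intersection inclusion, and compactness to $\Phi$ via Proposition~\ref{prop:omega-preimage-identity}, and you obtain forward invariance and idempotence by applying Lemma~\ref{lem:omega-preimage} to $\tU=\incl(U)$. Your extra details are valid points the paper leaves implicit: the injectivity of $\incl$, the direct check of the union identity, and the closedness of $\omega(U,\cH)$.
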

\begin{proof}
  By Proposition~\ref{prop:omega-preimage-identity}, the union identity, monotonicity, the intersection inclusion, and compactness follow from the corresponding properties of $\omega$-limits for $\Phi$ on $\susp X$. Applying Lemma~\ref{lem:omega-preimage} to $\tU=\incl(U)$ proves that $\omega(U,\cH)$ is forward invariant and $\omega(\omega(U,\cH),\cH)=\omega(U,\cH)$.
\end{proof}

\begin{proposition}
  \label{prop:ANbhd-lattice}
  The family $\sANbhdR(X,\cH)$ is a bounded distributive sublattice of $\scR(X)$.
\end{proposition}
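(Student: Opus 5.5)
We need to show that $\sANbhdR(X,\cH)$ contains $\emptyset$ and $X$, and is closed under $\vee=\cup$ and $\wedge$, where $F_1\wedge F_2=\cl(\Int F_1\cap\Int F_2)$. Distributivity is then inherited from $\scR(X)$: a subset of a distributive lattice closed under the same operations is a distributive sublattice. The argument uses only the $\omega$-calculus of Proposition~\ref{prop:hybrid-omega-calculus}, mirroring the classical argument of~\cite{Kalies:Mischaikow:Vandervorst:2014}.

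\emph{Bounds.} We have $\omega(\emptyset,\cH)=\emptyset\subseteq\Int\emptyset$. Also $\omega(X,\cH)\subseteq X=\Int_X X$. Both $\emptyset$ and $X$ are regular closed.

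\emph{Joins.} Let $U,V\in\sANbhdR(X,\cH)$. The union $U\cup V$ is regular closed. By the union identity of Proposition~\ref{prop:hybrid-omega-calculus},
\[
\omega(U\cup V,\cH)=\omega(U,\cH)\cup\omega(V,\cH)\subseteq\Int U\cup\Int V\subseteq\Int(U\cup V).
\]

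\emph{Meets.} This is the main step. Set $W:=\cl(\Int U\cap\Int V)$, which is regular closed. Since $W\subseteq U\cap V$, monotonicity and the intersection inclusion give
\[
\omega(W,\cH)\subseteq\omega(U\cap V,\cH)\subseteq\omega(U,\cH)\cap\omega(V,\cH)\subseteq\Int U\cap\Int V.
\]
The set $\Int U\cap\Int V$ is open and contained in $W$, so it lies in $\Int W$. Hence $\omega(W,\cH)\subseteq\Int W$. The potential obstacle is that meets in $\scR(X)$ are not plain intersections, but this is absorbed because the intersection of interiors is open and sits inside $W$. No invariance or TGC-specific input is needed beyond Proposition~\ref{prop:hybrid-omega-calculus}, which already depends on TGC through Proposition~\ref{prop:omega-preimage-identity}.
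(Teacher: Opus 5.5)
Your proposal is correct and follows the paper's proof essentially step for step. Joins come from the union identity for $\omega(\cdot,\cH)$, and meets from monotonicity together with the observation $\Int U\cap\Int V\subseteq\Int(U\wedge V)$. The bounds are checked directly, and distributivity is inherited from $\scR(X)$.
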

\begin{proof}
  The union identity for $\omega$ proves closure under joins. For $U,V\in\sANbhdR(X,\cH)$, monotonicity and $U\wedge V\subseteq U\cap V$ imply
  \[
    \omega(U\wedge V,\cH)\subseteq\omega(U,\cH)\cap\omega(V,\cH)\subseteq\Int(U)\cap\Int(V)\subseteq\Int(U\wedge V).
  \]
  Since $\emptyset,X\in\sANbhdR(X,\cH)$, this is a bounded sublattice of $\scR(X)$ and inherits its distributivity.
\end{proof}

\begin{proposition}
  \label{prop:Att-lattice}
  The collection $\sAtt(\cH)$ is a bounded distributive lattice ordered by inclusion. For $A,B\in\sAtt(\cH)$, its operations are
  \[
    A\vee B:=A\cup B,
    \quad
    A\wedge B:=\omega(A\cap B,\cH),
  \]
  and its least and greatest elements are $\emptyset$ and $\omega(X,\cH)$, respectively. The map
  \[
    \omega(\cdot,\cH):\sANbhdR(X,\cH)\surjection\sAtt(\cH)
  \]
  is a bounded lattice epimorphism.
\end{proposition}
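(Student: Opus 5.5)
The plan is to transfer everything to the suspension semiflow through Proposition~\ref{prop:omega-preimage-identity} and Lemma~\ref{lem:omega-preimage}, and then to mimic the classical argument for $\sAtt(X,f)$.

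\emph{Step 1: the surjection.} By definition, $\sAtt(\cH)=\setdef{\omega(U,\cH)}{U\in\sANbhd(X,\cH)}$. I first show that the regular closed neighborhoods already give all attractors. Let $U\in\sANbhd(X,\cH)$ and $A=\omega(U,\cH)$, which is compact by Proposition~\ref{prop:hybrid-omega-calculus}. I will try $U':=\cl\Int(U)$. Since $A\subseteq\Int U\subseteq U'$, monotonicity and idempotence give $A=\omega(A,\cH)\subseteq\omega(U',\cH)$. The difficulty is the reverse inclusion, because $U'$ need not be contained in $U$.

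To avoid this, I instead pick a compact regular closed neighborhood $N=\cl B_\varepsilon(A)$ with $N\subseteq\Int U$; this is regular closed after replacing it by $\cl\Int$ of it. Then $N\subseteq U$, so $\omega(N,\cH)\subseteq A$. Together with $A\subseteq\omega(N,\cH)$ this gives $\omega(N,\cH)=A\subseteq\Int N$. Hence $N\in\sANbhdR(X,\cH)$ and the map is onto. The same trick shows that $A$ is an attractor with $\omega(N,\cH)=A$ for every such small $N$; I will use this repeatedly.

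\emph{Step 2: homomorphism properties.} For joins, $\omega(U\cup V,\cH)=\omega(U,\cH)\cup\omega(V,\cH)$ by Proposition~\ref{prop:hybrid-omega-calculus}. The bounds are $\omega(\emptyset,\cH)=\emptyset$ and $\omega(X,\cH)$.

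Meets are the main obstacle. For $U,V\in\sANbhdR(X,\cH)$ with $A=\omega(U,\cH)$ and $B=\omega(V,\cH)$, I must show
\[
\omega(U\wedge V,\cH)=\omega(A\cap B,\cH).
\]
For ``$\supseteq$'': $A\cap B\subseteq\Int U\cap\Int V\subseteq\Int(U\wedge V)$, so monotonicity gives $\omega(A\cap B,\cH)\subseteq\omega(U\wedge V,\cH)$.

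For ``$\subseteq$'' I pass to $\Phi$. Write $\tA=\omega(\incl(U),\Phi)$ and $\tB=\omega(\incl(V),\Phi)$. The key claim is that $\omega(\incl(W),\Phi)\subseteq\tA\cap\tB$ for $W=U\wedge V$, which holds by monotonicity. Then I would use the classical fact, from~\cite{Kalies:Mischaikow:Vandervorst:2014} applied to $\Phi$, that the $\omega$-limit of a set contained in attracting neighborhoods of $\tA$ and $\tB$ equals the $\omega$-limit of a subset of $\tA\cap\tB$.

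To make this concrete, I note that $\omega(\incl(W),\Phi)=:\tK$ is invariant and contained in $\tA\cap\tB$. By Lemma~\ref{lem:omega-preimage} it equals $\susp(K)$, where $K=\incl^{-1}(\tK)\subseteq A\cap B$, and $\omega(K,\cH)=K$. Hence
\[
\omega(W,\cH)=K=\omega(K,\cH)\subseteq\omega(A\cap B,\cH),
\]
using Proposition~\ref{prop:omega-preimage-identity}. I expect the subtle point to be verifying that $\incl^{-1}(\tA)=A$ and $\tK\subseteq\tA$, but both follow from Proposition~\ref{prop:omega-preimage-identity} and monotonicity.

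\emph{Step 3: the lattice structure.} Because the map is a surjection preserving $\cup$ and sending $\wedge$ to $(A,B)\mapsto\omega(A\cap B,\cH)$, these operations are well defined on $\sAtt(\cH)$. The lattice axioms and distributivity then transfer from the distributive lattice $\sANbhdR(X,\cH)$ of Proposition~\ref{prop:ANbhd-lattice}, since a homomorphic image of a distributive lattice is distributive. The induced order is inclusion: if $A\subseteq B$ then $A\cup B=B$, and conversely. Finally, $\omega(A\cap B,\cH)=A$ when $A\subseteq B$ by idempotence.
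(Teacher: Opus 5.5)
Your proof is correct and follows the paper's argument. For surjectivity you take a closed neighborhood of $A$ inside $\Int U$, for meets you use monotonicity plus idempotence, and you transfer distributivity from $\sANbhdR(X,\cH)$. Your detour through $\Phi$ and Lemma~\ref{lem:omega-preimage} for the inclusion $\omega(U\wedge V,\cH)\subseteq\omega(A\cap B,\cH)$ only re-derives idempotence. Citing Proposition~\ref{prop:hybrid-omega-calculus} gives it in one line: $\omega(U\wedge V,\cH)=\omega(\omega(U\wedge V,\cH),\cH)\subseteq\omega(A\cap B,\cH)$.
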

\begin{proof}
Let $A=\omega(U,\cH)$ and $B=\omega(V,\cH)$ such that $U,V\in\sANbhdR(X,\cH)$. By Proposition~\ref{prop:hybrid-omega-calculus}, $\omega(U\vee V,\cH)=A\cup B$. Also, $A\cap B\subseteq U\wedge V\subseteq U\cap V$, so monotonicity and idempotence of $\omega$ give
\[
  \omega(A\cap B,\cH)\subseteq\omega(U\wedge V,\cH)\subseteq A\cap B \quad\Longrightarrow\quad \omega(U\wedge V,\cH)=\omega(A\cap B,\cH).
\]
Thus $\omega(\cdot,\cH)$ preserves joins and meets.

For surjectivity, if $A=\omega(U,\cH)$ with $U$ attracting, choose an open $W$ such that $A\subseteq W\subseteq\cl W\subseteq\Int(U)$. Then $N:=\cl W\in\scR(X)$ and $A=\omega(A,\cH)\subseteq\omega(N,\cH)\subseteq\omega(U,\cH)=A$, so $N\in\sANbhdR(X,\cH)$ and $\omega(N,\cH)=A$.

Hence $\omega(\cdot,\cH)$ is a bounded lattice epimorphism. Since $\sANbhdR(X,\cH)$ is bounded distributive by Proposition~\ref{prop:ANbhd-lattice}, so is $\sAtt(\cH)$, with $A\vee B=A\cup B$, $A\wedge B=\omega(A\cap B,\cH)$, and bounds $\emptyset$ and $\omega(X,\cH)$.
\end{proof}

\section{The Suspension Correspondence}
\label{sec:suspension-attractor-iso}

The map $\incl^{-1}$ identifies attracting--repelling pairs of the hybrid system and its suspension~\cite[Proposition~6.15]{Kvalheim:Gustafson:Koditschek}. We identify the resulting attractor lattices, describe the inverse explicitly, and compare their regular closed attracting neighborhoods.

\subsection{Forward invariance and trapping regions}

\begin{proposition}
  \label{prop:HybridForwardInvariantSet}
  If $S\subseteq X$ is forward invariant under $\cH$, then $\susp(S)$ is forward invariant under $\Phi$ and $\incl^{-1}\left(\susp(S)\right)=S$.
\end{proposition}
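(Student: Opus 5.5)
The proof has two independent parts: the identity $\incl^{-1}(\susp(S))=S$, which follows from Proposition~\ref{prop:suspension-identities}(i) and the forward invariance of $S$, and the forward invariance of $\susp(S)$, which follows from Proposition~\ref{prop:suspension-identities}(iv).

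\emph{The identity.} By Proposition~\ref{prop:suspension-identities}(i),
\[
  \incl^{-1}(\susp(S))=S\cup r(S\cap G).
\]
It therefore suffices to show $r(S\cap G)\subseteq S$. Fix $g\in S\cap G$. Since $\stoptime(g)=0$, the maximal hybrid trajectory from $g$ has $t_1=0$, $\gamma_0(0)=g\in G$, and $\gamma_1(0)=r(g)$. Hence $r(g)=\psi_g(0,1)\in\psi([0,\infty),\{g\})\subseteq S$ by forward invariance. This gives equality.

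\emph{Forward invariance.} Proposition~\ref{prop:suspension-identities}(iv) with $A=S$ gives
\[
  \Phi([0,\infty),\susp(S))=\susp\bigl(\psi([0,\infty),S)\bigr).
\]
The operator $\susp(\cdot)=\pi(\pi_0^{-1}(\cdot))$ is monotone, so forward invariance of $S$ yields $\susp(\psi([0,\infty),S))\subseteq\susp(S)$. Combining the two, $\Phi([0,\infty),\susp(S))\subseteq\susp(S)$.

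\emph{Fallback.} If one prefers not to lean on (iv), I would verify forward invariance directly. A point of $\susp(S)$ is either $\incl(x)$ with $x\in S$, or $\pi(g,s)$ with $g\in S\cap G$.
\begin{itemize}
  \item For $\incl(x)$: on base segments, Proposition~\ref{prop:suspension-identities}(iii) gives $\Phi(t+n,\incl(x))=\incl(\psi_x(t,n))\in\incl(S)$. On fiber segments over an impact point $g$, which lies in $S\cap G$, the orbit stays in $\pi(\{g\}\times[0,1])\subseteq\susp(S)$.
  \item For $\pi(g,s)$: the orbit moves up the fiber over $g$, stays in $\susp(S)$, and then reaches $\incl(r(g))$ with $r(g)\in S$, which reduces to the previous case.
\end{itemize}

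The only delicate point is the bookkeeping of fiber segments, in particular the Zeno case, where $\Phi$ runs through infinitely many unit-time fibers. Proposition~\ref{prop:suspension-identities}(iv) handles this, so I expect no real obstacle.
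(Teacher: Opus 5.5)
Your proposal is correct and follows the paper's proof: the identity comes from Proposition~\ref{prop:suspension-identities}(i) together with $r(S\cap G)\subseteq S$, and forward invariance comes from Proposition~\ref{prop:suspension-identities}(iv) together with monotonicity of $\susp$. Your check that $r(g)=\psi_g(0,1)$ for $g\in S\cap G$ (since $\stoptime(g)=0$ forces $J\geq1$ and $t_1=0$) supplies the detail the paper leaves implicit, and the fallback argument is not needed.
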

\begin{proof}
  Forward invariance implies $r(S\cap G)\subseteq S$, so Proposition~\ref{prop:suspension-identities}(i) implies $\incl^{-1}(\susp(S))=S$. Proposition~\ref{prop:suspension-identities}(iv) and $\psi([0,\infty),S)\subseteq S$ imply that $\susp(S)$ is forward invariant.
\end{proof}

\begin{proposition}
  \label{prop:SemiflowForwardInvariantSet}
  If $\tS\subseteq\susp X$ is forward invariant under $\Phi$, then $\incl^{-1}(\tS)$ is forward invariant under $\cH$ and $\susp\left(\incl^{-1}(\tS)\right)\subseteq \tS$.
\end{proposition}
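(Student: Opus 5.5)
The plan mirrors the proof of Proposition~\ref{prop:HybridForwardInvariantSet}, with the roles of $S$ and $\tS$ exchanged. The tools are Proposition~\ref{prop:suspension-identities}(iii) for forward invariance of $\incl^{-1}(\tS)$, and the explicit formula in Proposition~\ref{prop:suspension-identities}(ii) for the inclusion. Set $S:=\incl^{-1}(\tS)$.

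For forward invariance, take $x\in S$ and $(t,n)\in\dom(\psi_x)$. By Proposition~\ref{prop:suspension-identities}(iii),
\[
  \incl(\psi_x(t,n))=\Phi(t+n,\incl(x)).
\]
Since $\incl(x)\in\tS$ and $\tS$ is forward invariant under $\Phi$, the right-hand side lies in $\tS$. Hence $\psi_x(t,n)\in\incl^{-1}(\tS)=S$. As $x$ and $(t,n)$ were arbitrary, $\psi([0,\infty),S)\subseteq S$.

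For the inclusion, Proposition~\ref{prop:suspension-identities}(ii) gives
\[
  \susp(S)=\left(\tS\cap\incl(X)\right)\cup\pi\left(\incl^{-1}\left(\tS\cap\incl(G)\right)\times(0,1]\right).
\]
The first piece lies in $\tS$ trivially, so it remains to treat $\pi(g,s)$ with $\incl(g)\in\tS$, $g\in G$, and $s\in(0,1]$. The key observation, used already in the proof of Lemma~\ref{lem:omega-preimage}, is that $\stoptime(g)=0$ for $g\in G$. The definition of $\Phi$ then gives $\pi(g,s)=\Phi(s,\incl(g))$ for $s\in[0,1]$. Forward invariance of $\tS$ therefore yields $\pi(g,s)\in\tS$, which proves $\susp(S)\subseteq\tS$.

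There is no real obstacle here. The only point requiring care is the identity $\pi(g,s)=\Phi(s,\incl(g))$. It follows from the second case of the recursive definition of $\Phi$ with $\stoptime(g)=0$, where $\evarphi(0,g)=g$, together with the identification $\pi(g,1)=\incl(r(g))$ at $s=1$, which is consistent with that definition.
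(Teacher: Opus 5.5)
Your proof is correct and follows the paper's argument essentially verbatim. Forward invariance of $\incl^{-1}(\tS)$ comes from Proposition~\ref{prop:suspension-identities}(iii), and the inclusion $\susp(\incl^{-1}(\tS))\subseteq\tS$ comes from $\pi(g,s)=\Phi(s,\incl(g))$ for $g\in G$ with $\incl(g)\in\tS$, combined with the formula in Proposition~\ref{prop:suspension-identities}(ii).
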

\begin{proof}
  Let $S:=\incl^{-1}(\tS)$. If $x\in S$, Proposition~\ref{prop:suspension-identities}(iii) yields $\incl(\psi_x(t,n))=\Phi(t+n,\incl(x))\in \tS$ for every $(t,n)\in\dom(\psi_x)$, so $S$ is forward invariant. Moreover, for $g\in S\cap G$, we have $\pi(\setof{g}\times[0,1])=\Phi([0,1],\incl(g))\subseteq\tS$, so Proposition~\ref{prop:suspension-identities}(ii) implies $\susp(S)\subseteq\tS$.
\end{proof}

The following is adapted from~\cite{Kvalheim:Gustafson:Koditschek}. Fix $0<b<a<1$. For $U\subseteq X$, define
\[
  \Wab(U):=\incl(U)\cup\pi\left((U\cap G)\times[0,a]\right)\cup\pi\left(r^{-1}(U)\times[b,1]\right).
\]

Figure~\ref{fig:wab} illustrates the definition.

\begin{figure}[htbp]
  \centering
  \tikzset{midarrow/.style={postaction={decorate},decoration={markings,mark=at position #1 with {\arrow{>}}}}}
  \begin{subfigure}[t]{0.47\textwidth}
    \centering
    \begin{tikzpicture}[x=1.03cm,y=1.03cm,font=\small,line cap=round]
      \path[use as bounding box] (0,-0.30) rectangle (5.95,3.2);
      \draw[gray!60,fill=gray!3] (0.1,-0.1) -- (5.2,-0.1) -- (5.85,1.35) -- (0.75,1.35) -- cycle;
      \fill[blue!12] (0.55,0.35) .. controls (0.65,1.05) and (1.85,1.12) .. (2.9,1.0) .. controls (4.4,1.22) and (5.2,0.95) .. (5.3,0.5) .. controls (4.9,0.02) and (3.9,0.02) .. (2.9,0.15) .. controls (1.7,-0.03) and (0.65,0.02) .. cycle;
      \node[blue!70!black] at (2.9,0.68) {$\incl(U)$};
      \draw[gray!55,dashed,line width=0.8pt] plot[domain=0:1,samples=61,variable=\t] ({1.4+3*(3*\t*\t-2*\t*\t*\t)},{0.55+8.85*\t*(1-\t)});
      \draw[blue!70!black,line width=2.1pt,midarrow=0.5] plot[domain=0:1,samples=61,variable=\t] ({1.4+3*(3*\t*\t-2*\t*\t*\t)},{0.55+8.85*\t*(1-\t)});
      \draw[blue!70!black,fill=blue!70!black] (1.4,0.55) circle (2pt);
      \draw[blue!70!black,fill=blue!70!black] (4.4,0.55) circle (2pt);
      \node[below] at (1.4,0.43) {$\incl(g)$};
      \node[below] at (4.4,0.43) {$\incl(r(g))$};
      \node[gray!70!black,anchor=west] at (5.1,1.15) {$\incl(X)$};
      \node[above] at (2.9,2.85) {$\pi(\setof{g}\times[0,1])$};
    \end{tikzpicture}
    \caption{$g\in U,\quad r(g)\in U$}
  \end{subfigure}
  \hfill
  \begin{subfigure}[t]{0.47\textwidth}
    \centering
    \begin{tikzpicture}[x=1.03cm,y=1.03cm,font=\small,line cap=round]
      \path[use as bounding box] (0,-0.30) rectangle (5.95,3.2);
      \draw[gray!60,fill=gray!3] (0.1,-0.1) -- (5.2,-0.1) -- (5.85,1.35) -- (0.75,1.35) -- cycle;
      \fill[blue!12] (1.4,0.55) ellipse (0.94 and 0.43);
      \node[blue!70!black] at (0.82,0.57) {$\incl(U)$};
      \draw[gray!55,dashed,line width=0.8pt] plot[domain=0:1,samples=61,variable=\t] ({1.4+3*(3*\t*\t-2*\t*\t*\t)},{0.55+8.85*\t*(1-\t)});
      \draw[blue!70!black,line width=2.1pt,midarrow=0.5] plot[domain=0:0.6,samples=61,variable=\t] ({1.4+3*(3*\t*\t-2*\t*\t*\t)},{0.55+8.85*\t*(1-\t)});
      \draw[blue!70!black,fill=blue!70!black] (1.4,0.55) circle (2pt);
      \draw[gray!65,fill=white] (4.4,0.55) circle (2pt);
      \node[below] at (1.4,0.43) {$\incl(g)$};
      \node[below] at (4.4,0.43) {$\incl(r(g))$};
      \node[gray!70!black,anchor=west] at (5.1,1.15) {$\incl(X)$};
      \fill[blue!70!black] (3.344,2.674) circle (2pt);
      \node[above right] at (3.344,2.674) {$\pi(g,a)$};
    \end{tikzpicture}
    \caption{$g\in U,\quad r(g)\notin U$}
  \end{subfigure}

  \medskip
  \begin{subfigure}[t]{0.47\textwidth}
    \centering
    \begin{tikzpicture}[x=1.03cm,y=1.03cm,font=\small,line cap=round]
      \path[use as bounding box] (0,-0.30) rectangle (5.95,3.2);
      \draw[gray!60,fill=gray!3] (0.1,-0.1) -- (5.2,-0.1) -- (5.85,1.35) -- (0.75,1.35) -- cycle;
      \fill[blue!12] (4.4,0.55) ellipse (0.94 and 0.43);
      \node[blue!70!black] at (4.98,0.57) {$\incl(U)$};
      \draw[gray!55,dashed,line width=0.8pt] plot[domain=0:1,samples=61,variable=\t] ({1.4+3*(3*\t*\t-2*\t*\t*\t)},{0.55+8.85*\t*(1-\t)});
      \draw[blue!70!black,line width=2.1pt,midarrow=0.5] plot[domain=0.4:1,samples=61,variable=\t] ({1.4+3*(3*\t*\t-2*\t*\t*\t)},{0.55+8.85*\t*(1-\t)});
      \draw[gray!65,fill=white] (1.4,0.55) circle (2pt);
      \draw[blue!70!black,fill=blue!70!black] (4.4,0.55) circle (2pt);
      \node[below] at (1.4,0.43) {$\incl(g)$};
      \node[below] at (4.4,0.43) {$\incl(r(g))$};
      \node[gray!70!black,anchor=west] at (5.1,1.15) {$\incl(X)$};
      \fill[blue!70!black] (2.456,2.674) circle (2pt);
      \node[above left] at (2.456,2.674) {$\pi(g,b)$};
    \end{tikzpicture}
    \caption{$g\notin U,\quad r(g)\in U$}
  \end{subfigure}
  \hfill
  \begin{subfigure}[t]{0.47\textwidth}
    \centering
    \begin{tikzpicture}[x=1.03cm,y=1.03cm,font=\small,line cap=round]
      \path[use as bounding box] (0,-0.30) rectangle (5.95,3.2);
      \draw[gray!60,fill=gray!3] (0.1,-0.1) -- (5.2,-0.1) -- (5.85,1.35) -- (0.75,1.35) -- cycle;
      \fill[blue!12] (2.9,0.55) ellipse (0.90 and 0.43);
      \node[blue!70!black] at (2.9,0.55) {$\incl(U)$};
      \draw[gray!55,dashed,line width=0.8pt,midarrow=0.5] plot[domain=0:1,samples=61,variable=\t] ({1.4+3*(3*\t*\t-2*\t*\t*\t)},{0.55+8.85*\t*(1-\t)});
      \draw[gray!65,fill=white] (1.4,0.55) circle (2pt);
      \draw[gray!65,fill=white] (4.4,0.55) circle (2pt);
      \node[below] at (1.4,0.43) {$\incl(g)$};
      \node[below] at (4.4,0.43) {$\incl(r(g))$};
      \node[gray!70!black,anchor=west] at (5.1,1.15) {$\incl(X)$};
      \node[above] at (2.9,2.85) {$\pi(\setof{g}\times[0,1])$};
    \end{tikzpicture}
    \caption{$g\notin U,\quad r(g)\notin U$}
  \end{subfigure}
  \caption{The set $\Wab(U)$ for $G=\setof{g}$, shown by the shaded region $\incl(U)$ and the blue portions of the attached interval.}
  \label{fig:wab}
\end{figure}

\begin{lemma}
  \label{lem:Wab}
  For every $U\subseteq X$, the following statements hold.
  \begin{enumerate}[(i)]
    \item $\incl^{-1}(\Wab(U))=U$ and $\incl(\Int(U))\subseteq\Int_{\susp X}(\Wab(U))$.
    \item $\omega(\Wab(U),\Phi)=\omega(\incl(U),\Phi)$.
    \item If $A\subseteq X$ is forward invariant under $\cH$ and $A\subseteq\Int(U)$, then $\susp(A)\subseteq\Int_{\susp X}(\Wab(U))$.
  \end{enumerate}
\end{lemma}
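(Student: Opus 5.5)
Part (i) is a direct set computation. The points of $\susp X$ are $\incl(X)$ together with the points $\pi(g,s)$ for $g\in G$ and $0<s<1$; the endpoints $s=0,1$ are identified with $\incl(g)$ and $\incl(r(g))$. So $\incl^{-1}(\Wab(U))$ consists of $U$, together with the $x$ with $\incl(x)=\pi(g,0)$ for $g\in U\cap G$ (namely $x=g\in U$), together with the $x$ with $\incl(x)=\pi(g,1)$ for $r(g)\in U$ (namely $x=r(g)\in U$). Hence $\incl^{-1}(\Wab(U))=U$.

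For the interior claim, I take $x\in\Int(U)$ and build an open set around $\incl(x)$ inside $\Wab(U)$. I pull back to $X\sqcup(G\times[0,1])$ and use the quotient topology. With $O:=\Int(U)$, consider the set
\[
\pi\bigl(O\sqcup ((O\cap G)\times[0,a))\sqcup (r^{-1}(O)\times(b,1])\bigr).
\]
Its preimage in $X\sqcup(G\times[0,1])$ is open. It equals $O$ on the $X$ component. On the cylinder component it is $((O\cap G)\times[0,a))\cup(r^{-1}(O)\times(b,1])$, which is open because $O\cap G$ and $r^{-1}(O)$ are open in $G$ by continuity of $r$. The identified endpoints are consistent, since $(g,0)\sim g\in O$ exactly when $g\in O\cap G$, and $(g,1)\sim r(g)\in O$ exactly when $g\in r^{-1}(O)$. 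So this set is an open subset of $\Wab(U)$ containing $\incl(\Int U)$.

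For (ii), the inclusion $\supseteq$ follows from monotonicity, since $\incl(U)\subseteq\Wab(U)$. For $\subseteq$, I show that every point of $\Wab(U)$ reaches $\incl(U)$ within time at most $1$, or else its entire forward orbit already appears in the forward orbit of $\incl(U)$. A point $\pi(g,s)$ with $g\in U\cap G$ and $s\le a$ equals $\Phi(s,\incl(g))$, so its forward orbit is contained in $\Phi([0,\infty),\incl(U))$. A point $\pi(g,s)$ with $r(g)\in U$ and $s\ge b$ satisfies $\Phi(1-s,\pi(g,s))=\incl(r(g))\in\incl(U)$. This gives
\[
\Phi([T+1,\infty),\Wab(U))\subseteq \Phi([T,\infty),\incl(U)),
\]
and intersecting closures over $T$ yields the inclusion.

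For (iii), take $A$ forward invariant under $\cH$ with $A\subseteq\Int(U)$. Then $\susp(A)=\incl(A)\cup\pi((A\cap G)\times(0,1])$. The points $\incl(A)$ are interior by (i). For $g\in A\cap G$, forward invariance gives $r(g)\in A\subseteq\Int(U)$. So $g\in \Int(U)\cap G$ and $g\in r^{-1}(\Int U)$, and the open set built in (i) contains $\pi(\{g\}\times[0,a))\cup\pi(\{g\}\times(b,1])$. Since $b<a$, these two pieces cover the whole fiber. Thus $\susp(A)\subseteq\Int_{\susp X}(\Wab(U))$. The step needing the most care is the quotient-topology openness argument in (i), which (iii) reuses, in particular the consistency at the identified endpoints $(g,1)\sim r(g)$.
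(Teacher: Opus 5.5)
Your proof is correct and follows essentially the same route as the paper. In (i) you make explicit, by checking the saturation of $W$ at the identified endpoints, what the paper compresses into ``the quotient topology gives''; (ii) rests on the same identities $\pi(g,s)=\Phi(s,\incl(g))$ and $\Phi(1-s,\pi(g,s))=\incl(r(g))$, proved via the tail inclusion $\Phi([T+1,\infty),\Wab(U))\subseteq\Phi([T,\infty),\incl(U))$ rather than by first splitting $\omega(\Wab(U),\Phi)$ with the finite-union property (an equally valid, cosmetic difference); and (iii) is the same fiber-covering argument using $b<a$.
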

\begin{proof}
For (i), since $\pi(g,0)=\incl(g)$ and $\pi(g,1)=\incl(r(g))$, the definition of $\Wab(U)$ gives $\incl^{-1}(\Wab(U))=U$. Moreover, $\Int(U)\cap G$ and $r^{-1}(\Int(U))$ are open in $G$, so, since $a>0$ and $b<1$, the quotient topology gives $\incl(\Int(U))\subseteq\Int_{\susp X}(\Wab(U))$.

For (ii), the finite-union property of $\omega$ gives
\[
  \omega(\Wab(U),\Phi)=\omega(\incl(U),\Phi)\cup\omega\!\left(\pi((U\cap G)\times[0,a]),\Phi\right)\cup\omega\!\left(\pi(r^{-1}(U)\times[b,1]),\Phi\right).
\]
Since $\pi(g,s)=\Phi(s,\incl(g))$ for $g\in U\cap G$ and $s\in[0,a]$, finite-time invariance and monotonicity give $\omega(\pi((U\cap G)\times[0,a]),\Phi)\subseteq\omega(\incl(U),\Phi)$. If $g\in r^{-1}(U)$ and $s\in[b,1]$, then $\Phi(1-s,\pi(g,s))=\incl(r(g))\in\incl(U)$ and $0\leq1-s\leq1-b$. Thus every point of $\pi(r^{-1}(U)\times[b,1])$ enters $\incl(U)$ in uniformly bounded time, so $\omega(\pi(r^{-1}(U)\times[b,1]),\Phi)\subseteq\omega(\incl(U),\Phi)$. Therefore $\omega(\Wab(U),\Phi)=\omega(\incl(U),\Phi)$.

For (iii), forward invariance gives $r(A\cap G)\subseteq A\subseteq\Int(U)$. Thus, for $g\in A\cap G$, $O:=(\Int(U)\cap G)\cap r^{-1}(\Int(U))$ is a neighborhood of $g$ in $G$. Since $b<a$, the definition of $\Wab(U)$ gives $\pi(O\times[0,1])\subseteq\Wab(U)$, and hence the quotient topology gives $\pi(\{g\}\times[0,1])\subseteq\Int_{\susp X}(\Wab(U))$. Together with (i), this yields $\susp(A)\subseteq\Int_{\susp X}(\Wab(U))$.
\end{proof}

\begin{theorem}
  \label{thm:trapping-equivalence}
  If $U$ is a trapping region for $\cH$, then $\Wab(U)$ is a trapping region for $\Phi$. Conversely, if $\tU$ is a trapping region for $\Phi$, then $\incl^{-1}(\tU)$ is a trapping region for $\cH$.
\end{theorem}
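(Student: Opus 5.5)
The plan is to verify the two conditions in Definition~\ref{def:hybrid-trapping} (forward invariance and $\omega$-limit in the interior) separately in each direction. The work is to transfer the $\omega$-limit through Proposition~\ref{prop:omega-preimage-identity}, Lemma~\ref{lem:omega-preimage} and Lemma~\ref{lem:Wab}.

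\emph{Forward direction.} Let $U$ be a trapping region for $\cH$ and set $W:=\Wab(U)$.

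First I show that $W$ is forward invariant under $\Phi$. By Proposition~\ref{prop:HybridForwardInvariantSet}, $\susp(U)$ is forward invariant. Forward invariance of $U$ gives $r(U\cap G)\subseteq U$, so $U\cap G\subseteq r^{-1}(U)$. Hence for $g\in U\cap G$ the whole fiber $\pi(\setof{g}\times[0,1])$ lies in $W$, and therefore $\susp(U)\subseteq W$. This covers $\incl(U)$ and the pieces $\pi((U\cap G)\times[0,a])$.

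The remaining points are $\pi(g,s)$ with $g\in r^{-1}(U)$ and $s\in[b,1]$, possibly with $g\notin U$. Under $\Phi$, such a point moves along $\pi(\setof{g}\times[s,1])\subseteq W$ and reaches $\incl(r(g))\in\incl(U)\subseteq\susp(U)$ at time $1-s$. It stays in $\susp(U)\subseteq W$ afterwards, by the semigroup property. So $W$ is forward invariant.

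Next, for the $\omega$-limit, Lemma~\ref{lem:Wab}(ii) gives $\omega(W,\Phi)=\omega(\incl(U),\Phi)=:\tK$. Put $K:=\incl^{-1}(\tK)$. Proposition~\ref{prop:omega-preimage-identity} gives $K=\omega(U,\cH)\subseteq\Int(U)$. Lemma~\ref{lem:omega-preimage} gives that $K$ is forward invariant under $\cH$ and $\tK=\susp(K)$. Lemma~\ref{lem:Wab}(iii), applied to $A=K$, then yields $\omega(W,\Phi)=\susp(K)\subseteq\Int_{\susp X}(W)$. Thus $W$ is a forward invariant attracting neighborhood, i.e.\ a trapping region for $\Phi$.

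\emph{Converse direction.} Let $\tU$ be a trapping region for $\Phi$ and set $U:=\incl^{-1}(\tU)$. Proposition~\ref{prop:SemiflowForwardInvariantSet} gives forward invariance of $U$ under $\cH$.

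For the attracting condition, combine Proposition~\ref{prop:omega-preimage-identity} with monotonicity of $\omega(\cdot,\Phi)$ and $\incl(U)\subseteq\tU$:
\[
  \omega(U,\cH)=\incl^{-1}\left(\omega(\incl(U),\Phi)\right)\subseteq\incl^{-1}\left(\omega(\tU,\Phi)\right)\subseteq\incl^{-1}\left(\Int_{\susp X}\tU\right).
\]
Since $\incl$ is continuous, the last set is open in $X$, and it is contained in $U$. Hence it lies in $\Int_X(U)$, which completes the converse.

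The only step that is not a direct citation is the forward invariance of $\Wab(U)$, specifically the fiber pieces over $r^{-1}(U)\setminus U$. I expect this to be the main point needing care: it relies on the explicit fiber formula for $\Phi$ and on the fact that after time $1-s$ the trajectory enters $\incl(U)$, where the invariance of $\susp(U)$ takes over.
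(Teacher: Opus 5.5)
Your proposal is correct and follows the paper's proof essentially step for step. You prove forward invariance of $\Wab(U)$ through $\Wab(U)=\susp(U)\cup\pi(r^{-1}(U)\times[b,1])$ and Proposition~\ref{prop:HybridForwardInvariantSet}, the attracting condition through Lemma~\ref{lem:Wab}(ii)--(iii), Proposition~\ref{prop:omega-preimage-identity} and Lemma~\ref{lem:omega-preimage}, and the converse through Proposition~\ref{prop:SemiflowForwardInvariantSet} and the same monotonicity chain; your explicit handling of the fiber points over $r^{-1}(U)\setminus U$ and of $\incl^{-1}(\Int_{\susp X}\tU)\subseteq\Int_X(U)$ only spells out what the paper leaves to the definition of $\Phi$ and to continuity of $\incl$.
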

\begin{proof}
  Suppose that $U$ is a trapping region for $\cH$. Forward invariance implies $r(U\cap G)\subseteq U$, so $\Wab(U)=\susp(U)\cup\pi(r^{-1}(U)\times[b,1])$. Then Proposition~\ref{prop:HybridForwardInvariantSet} and the definition of $\Phi$ imply that $\Wab(U)$ is forward invariant.

  Since $U$ is a trapping region, $A:=\omega(U,\cH)$ satisfies $A\subseteq\Int_X(U)$ and is forward invariant by Proposition~\ref{prop:hybrid-omega-calculus}. Lemma~\ref{lem:Wab}(ii), Proposition~\ref{prop:omega-preimage-identity}, and Lemma~\ref{lem:omega-preimage} therefore yield
  \[
    \omega(\Wab(U),\Phi)=\susp(A)\subseteq\Int_{\susp X}(\Wab(U)),
  \]
  where the inclusion follows from Lemma~\ref{lem:Wab}(iii). Thus $\Wab(U)$ is a trapping region for $\Phi$.

  Conversely, suppose that $\tU$ is a trapping region for $\Phi$ and define $U:=\incl^{-1}(\tU)$. By Proposition~\ref{prop:SemiflowForwardInvariantSet}, $U$ is forward invariant, while Proposition~\ref{prop:omega-preimage-identity} implies
  \[
      \omega(U,\cH)=\incl^{-1}\left(\omega(\incl(U),\Phi)\right)
      \subseteq\incl^{-1}\left(\omega(\tU,\Phi)\right)
      \subseteq\incl^{-1}(\Int_{\susp X}(\tU))
      \subseteq\Int_X(U).
  \]
  Thus $U$ is a trapping region for $\cH$.
\end{proof}

Note that the set $\Wab(U)$ cannot simply be replaced by $\susp(U)$.
\begin{example}
  \label{ex:suspension-trapping-region}
  Let $X=\{x_0\}\sqcup[0,1]$, let $G=\{1\}$ and $r(1)=x_0$, fix $x_0$ under the local semiflow, and let $\varphi(t,x)=x+t$ for $x\in[0,1)$ and $0\leq t<1-x$. The set $U:=\{x_0\}$ is a hybrid trapping region because $x_0$ is isolated and fixed. Since $U\cap G=\emptyset$,
  \[
    \susp(U)=\{\incl(x_0)\},\quad \pi(1,1)=\incl(x_0).
  \]
  Every neighborhood of $\incl(x_0)$ in $\susp X$ contains points $\pi(1,s)$ with $s<1$, so $\susp(U)$ has empty interior and is not a trapping region for $\Phi$. Note that $\Wab(U)=\pi(\{1\}\times[b,1])$.
  \begin{center}
    \begin{tikzpicture}[x=1cm,y=1cm,>=stealth]
      \node[anchor=east] at (-0.45,0) {$X$};
      \draw[thick] (0,0) -- (4,0);
      \draw[->,thick] (1.4,0) -- (2.5,0);
      \node[above] at (2,0.08) {$\varphi$};
      \fill (0,0) circle (1.6pt);
      \fill (4,0) circle (1.6pt);
      \node[below] at (0,-0.05) {$0$};
      \node[below] at (4,-0.05) {$1$};
      \fill[blue!65!black] (9,0) circle (2.5pt);
      \node[above] at (9,0.08) {$x_0$};
      \node[below] at (9,-0.05) {$U=\{x_0\}$};
      \draw[->,dashed] (4.1,0.14) to[bend left=22] node[above] {$r$} (8.85,0.14);

      \node[anchor=east] at (-0.45,-2) {$\susp X$};
      \draw[thick] (0,-2) -- (4,-2);
      \draw[thick,gray] (4,-2) -- (9,-2);
      \draw[blue!65!black,line width=3pt] (7.1,-2) -- (9,-2);
      \draw[->,thick] (1.4,-2) -- (2.5,-2);
      \draw[->,thick,gray] (5,-2) -- (6,-2);
      \draw[->,thick] (7.6,-2) -- (8.4,-2);
      \fill (0,-2) circle (1.6pt);
      \fill (4,-2) circle (1.6pt);
      \fill[blue!65!black] (9,-2) circle (2.5pt);
      \draw (7.1,-2.08) -- (7.1,-1.92);
      \node[above] at (2,-1.6) {$\incl([0,1])$};
      \node[above] at (6,-1.6) {$\pi(\{1\}\times[0,1])$};
      \node[below] at (0,-2.08) {$\incl(0)$};
      \node[below] at (4,-2.08) {$\incl(1)$};
      \node[below] at (7.1,-2.08) {$\pi(1,b)$};
      \node[below] at (9,-2.08) {$\incl(x_0)$};
      \node[above] at (9,-1.15) {$\susp(U)=\{\incl(x_0)\}$};
      \draw[thin] (9,-1.2) -- (9,-1.82);
      \draw[blue!65!black] (7.1,-2.65) -- (7.1,-2.75) -- (9,-2.75) -- (9,-2.65);
      \node[below] at (8.05,-2.78) {$\Wab(U)$};
    \end{tikzpicture}
  \end{center}
\end{example}

\subsection{Attractors}

\begin{theorem}
  \label{thm:main-correspondence}
  The map $\incl^{-1}:\sAtt(\susp X,\Phi)\to\sAtt(\cH)$ is a bounded lattice isomorphism with inverse $A\mapsto\susp(A)$.
\end{theorem}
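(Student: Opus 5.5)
We will show four things in turn. First, $\incl^{-1}$ maps $\sAtt(\susp X,\Phi)$ into $\sAtt(\cH)$. Second, $A\mapsto\susp(A)$ maps $\sAtt(\cH)$ into $\sAtt(\susp X,\Phi)$. Third, the two maps are mutually inverse. Fourth, $\incl^{-1}$ preserves joins, meets and bounds.

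\textbf{Attractors of $\Phi$ go to attractors of $\cH$.} Let $\tA=\omega(\tU,\Phi)$ with $\tA\subseteq\Int\tU$. Set $A:=\incl^{-1}(\tA)$. By Lemma~\ref{lem:omega-preimage}, $\tA=\susp(A)$ and $\omega(A,\cH)=A$. To exhibit an attracting neighborhood of $A$, we take $U:=\incl^{-1}(\tU)$. By Proposition~\ref{prop:omega-preimage-identity} and monotonicity,
\[
\omega(U,\cH)=\incl^{-1}\left(\omega(\incl(U),\Phi)\right)\subseteq\incl^{-1}(\tA)=A.
\]
Conversely, $A\subseteq U$ gives $A=\omega(A,\cH)\subseteq\omega(U,\cH)$. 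Hence $\omega(U,\cH)=A\subseteq\incl^{-1}(\Int\tU)\subseteq\Int_X U$, so $A\in\sAtt(\cH)$ and $\susp(\incl^{-1}(\tA))=\tA$.

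\textbf{Attractors of $\cH$ go to attractors of $\Phi$.} Let $A=\omega(U,\cH)\subseteq\Int U$. By Proposition~\ref{prop:hybrid-omega-calculus}, $A$ is compact and forward invariant. We use the neighborhood $\Wab(U)$. Lemma~\ref{lem:Wab}(ii) and Proposition~\ref{prop:omega-preimage-identity} give
\[
\omega(\Wab(U),\Phi)=\omega(\incl(U),\Phi)=:\tK .
\]
Applying Lemma~\ref{lem:omega-preimage} with $\tU=\incl(U)$ gives $\tK=\susp(\incl^{-1}\tK)=\susp(A)$. Lemma~\ref{lem:Wab}(iii) then gives $\susp(A)\subseteq\Int\Wab(U)$. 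Thus $\susp(A)\in\sAtt(\susp X,\Phi)$, and by Proposition~\ref{prop:HybridForwardInvariantSet}, $\incl^{-1}(\susp(A))=A$. Together with the last identity of the previous step, the two maps are mutually inverse bijections.

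\textbf{Lattice structure.} Both maps preserve inclusion: $\incl^{-1}$ does trivially, and so does $\susp$. An order isomorphism between lattices automatically preserves joins and meets, so it is a lattice isomorphism. For the bounds, $\incl^{-1}(\emptyset)=\emptyset$, and Proposition~\ref{prop:omega-preimage-identity} gives $\incl^{-1}(\omega(\susp X,\Phi))\supseteq\omega(X,\cH)$. Since $\omega(\susp X,\Phi)$ is the top element and $\incl^{-1}$ is order preserving and surjective onto $\sAtt(\cH)$, its image must be the top element $\omega(X,\cH)$. As a direct check on joins, $\incl^{-1}$ preserves unions. For meets we may also verify directly that
\[
\incl^{-1}\left(\omega(\tA\cap\tB,\Phi)\right)=\omega\left(\incl^{-1}(\tA)\cap\incl^{-1}(\tB),\cH\right).
\]
To do this, apply Lemma~\ref{lem:omega-preimage} to $\tA\cap\tB$ and note that $\tA\cap\tB=\susp(A\cap B)$, where $A=\incl^{-1}(\tA)$ and $B=\incl^{-1}(\tB)$.

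\textbf{Main obstacle.} The delicate step is the second one: showing that $\susp(A)$ lies in the interior of some neighborhood in $\susp X$. The set $\susp(U)$ fails in general, as Example~\ref{ex:suspension-trapping-region} shows. This is exactly why we use $\Wab(U)$ together with the forward invariance of $A$ in Lemma~\ref{lem:Wab}(iii). Note that $U$ need not be forward invariant; only $A$ must be.
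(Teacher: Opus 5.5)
Your proof is correct and follows essentially the same route as the paper: in one direction you pull back $U=\incl^{-1}(\tU)$, in the other you use the neighborhood $\Wab(U)$ with Lemma~\ref{lem:Wab}(ii),(iii) and Lemma~\ref{lem:omega-preimage}, and you conclude that mutually inverse monotone maps form an order isomorphism. Your extra direct checks of bounds and meets are redundant once the order isomorphism is established. As sketched, the meet check would also need Proposition~\ref{prop:omega-preimage-identity}, together with the forward invariance of $A$ and $B$ to justify $\tA\cap\tB=\susp(A\cap B)$, rather than Lemma~\ref{lem:omega-preimage} alone.
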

\begin{proof}
  Let $\tA=\omega(\tU,\Phi)$ for some attracting neighborhood $\tU\subseteq\susp X$ and define $A:=\incl^{-1}(\tA)$ and $U:=\incl^{-1}(\tU)$. Since $\incl(U)\subseteq\tU$, Proposition~\ref{prop:omega-preimage-identity} yields $\omega(U,\cH)\subseteq A$. Conversely, $A\subseteq\incl^{-1}(\Int_{\susp X}\tU)\subseteq\Int_X(U)$ and Lemma~\ref{lem:omega-preimage} yields $\omega(A,\cH)=A$, so monotonicity implies that $A\subseteq\omega(U,\cH)$. Therefore $U$ is an attracting neighborhood with $\omega(U,\cH)=A$.

  Conversely, let $A=\omega(U,\cH)$ for some attracting neighborhood $U\subseteq X$ and define $\tU:=\Wab(U)$ and $\tA:=\susp(A)$. Lemma~\ref{lem:Wab}(ii), Proposition~\ref{prop:omega-preimage-identity}, and Lemma~\ref{lem:omega-preimage} imply that
  \[
    \omega(\tU,\Phi)=\tA.
  \]
  Since $A$ is forward invariant and $A\subseteq\Int_X(U)$, Lemma~\ref{lem:Wab}(iii) yields $\tA\subseteq\Int_{\susp X}(\tU)$. Thus $\tU$ is an attracting neighborhood with attractor $\tA$. The maps $\incl^{-1}$ and $\susp$ are inverse by Proposition~\ref{prop:suspension-identities}(i) and Lemma~\ref{lem:omega-preimage}. Since both maps are monotone, they form an order isomorphism and therefore preserve joins, meets, and bounds.
\end{proof}

\subsection{Regularized preimages}

For $\tE\subseteq\susp X$, define its \emph{regularized preimage} by 
\[
  \rho(\tE):=\cl_X\left(\Int_X(\incl^{-1}(\tE))\right),
\]
and denote the regularization map in the suspension space by $\rhosusp:\cP(\susp X)\to\scR(\susp X)$, that is,
\[
  \rhosusp(\tE):=\cl_{\susp X}\Int_{\susp X}(\tE).
\]

\begin{lemma}
  \label{lem:regularized-preimage-joins}
  The map $\rho:\scR(\susp X)\to\scR(X)$ preserves joins and bounds.
\end{lemma}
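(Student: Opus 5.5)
The plan is to verify the bounds directly and then prove the join identity $\rho(\tE_1\cup\tE_2)=\rho(\tE_1)\cup\rho(\tE_2)$ for $\tE_1,\tE_2\in\scR(\susp X)$ by two inclusions. For the bounds, $\incl^{-1}(\emptyset)=\emptyset$ and $\incl^{-1}(\susp X)=X$ give $\rho(\emptyset)=\emptyset$ and $\rho(\susp X)=\cl_X\Int_X X=X$. We should also confirm that $\rho$ lands in $\scR(X)$. This holds because $\cl\Int(S)$ is regular closed for any $S$: from $\Int\cl\Int S\supseteq\Int S$ and $\Int\cl\Int S\subseteq\cl\Int S$ one gets $\cl\Int(\cl\Int S)=\cl\Int S$.

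The inclusion $\supseteq$ is monotonicity. Since $\incl^{-1}$, $\Int_X$ and $\cl_X$ are all monotone, each $\rho(\tE_i)\subseteq\rho(\tE_1\cup\tE_2)$.

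The inclusion $\subseteq$ is the real content, because the interior of a union can exceed the union of interiors. We have $\incl^{-1}(\tE_1\cup\tE_2)=E_1\cup E_2$ with $E_i:=\incl^{-1}(\tE_i)$. These $E_i$ are closed in $X$, since $\incl$ is continuous and the $\tE_i$ are closed, though they need not be regular closed. The key step is the general-topology fact that for closed sets $E_1,E_2$ in any space,
\[
\Int(E_1\cup E_2)\subseteq\cl\left(\Int E_1\cup\Int E_2\right).
\]
To prove it, take an open $O\subseteq E_1\cup E_2$ and a nonempty open $W\subseteq O$. If $W\not\subseteq E_1$, then $W\setminus E_1$ is a nonempty open set contained in $E_2$, so it meets $\Int E_2$. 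Otherwise $W\subseteq\Int E_1$. In either case $W$ meets $\Int E_1\cup\Int E_2$, and this gives the density claim. Taking closures then yields $\rho(\tE_1\cup\tE_2)\subseteq\cl(\Int E_1\cup\Int E_2)=\rho(\tE_1)\cup\rho(\tE_2)$.

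The only subtle point is this last step. It relies solely on the closedness of the preimages, which is where continuity of $\incl$ enters, and not on regularity in $\susp X$. Nothing dynamical is needed.
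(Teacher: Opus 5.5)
Your proof is correct and follows the same route as the paper: use continuity of $\incl$ to get closed preimages $E_i=\incl^{-1}(\tE_i)$, apply the identity $\cl_X\Int_X(E_1\cup E_2)=\cl_X\Int_X E_1\cup\cl_X\Int_X E_2$ for closed sets, and check the bounds directly. The only difference is that the paper states this closed-set identity without proof, whereas you supply a correct density argument for it and also verify that $\rho$ takes values in $\scR(X)$.
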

\begin{proof}
Let $\tE,\tF\in\scR(\susp X)$ and set $E:=\incl^{-1}(\tE)$ and $F:=\incl^{-1}(\tF)$. Since $\incl$ is continuous, $E$ and $F$ are closed in $X$. So, $\rho(\tE\vee\tF)=\cl_X\Int_X(E\cup F)=\cl_X\Int_X(E)\cup\cl_X\Int_X(F) =\rho(\tE)\vee\rho(\tF).$ Also, $\rho(\emptyset)=\emptyset$ and $\rho(\susp X)=X$. Therefore $\rho$ preserves joins and bounds.
\end{proof}

\begin{lemma}
  \label{lem:regularized-preimage-meets}
  For closed sets $\tC,\tC'\subseteq\susp X$,
  \[
    \rho(\tC)\wedge\rho(\tC')=\rho(\tC\cap\tC').
  \]
  In particular, $\rho(\tU\wedge\tV)\subseteq\rho(\tU)\wedge\rho(\tV)$ for all $\tU,\tV\in\scR(\susp X)$.
\end{lemma}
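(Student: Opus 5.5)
The identity reduces to a general fact about regular closed sets, using that preimages under $\incl$ commute with intersections. Write $C:=\incl^{-1}(\tC)$ and $C':=\incl^{-1}(\tC')$. Both are closed in $X$ because $\incl$ is continuous, and $\incl^{-1}(\tC\cap\tC')=C\cap C'$. The claim therefore becomes
\[
  \cl_X\left(\Int_X(\cl_X\Int_X C)\cap\Int_X(\cl_X\Int_X C')\right)=\cl_X\Int_X(C\cap C')
\]
for closed $C,C'\subseteq X$. Here the left side is $\rho(\tC)\wedge\rho(\tC')$ by the definition of $\wedge$ in $\scR(X)$.

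To prove this, I will first use that $\Int_X(C\cap C')=\Int_X C\cap\Int_X C'$. Next I will use the standard fact that for closed $C$, $\Int_X C\subseteq\Int_X\cl_X\Int_X C\subseteq\Int_X C$. The first inclusion holds because $\Int_X C$ is open and contained in $\cl_X\Int_X C$. The second holds because $\cl_X\Int_X C\subseteq C$, since $C$ is closed. Hence $\Int_X(\rho(\tC))=\Int_X C$, and likewise for $C'$. Substituting, the left side becomes
\[
  \cl_X(\Int_X C\cap\Int_X C')=\cl_X\Int_X(C\cap C')=\rho(\tC\cap\tC').
\]
The only point needing care is that closedness of $C$ is essential: without it, $\cl\Int C\not\subseteq C$ in general. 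This is exactly why the lemma is stated for closed sets.

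For the ``in particular'' clause, take $\tU,\tV\in\scR(\susp X)$; these are closed. By definition $\tU\wedge\tV=\cl_{\susp X}(\Int\tU\cap\Int\tV)\subseteq\tU\cap\tV$. Both $\incl^{-1}$ and $\cl_X\Int_X$ are monotone, so $\rho$ is monotone. This gives $\rho(\tU\wedge\tV)\subseteq\rho(\tU\cap\tV)$, and the identity just proved rewrites the right side as $\rho(\tU)\wedge\rho(\tV)$. I expect no real obstacle. The inclusion is generally strict because $\incl^{-1}$ need not map interiors to interiors, but that does not matter here.
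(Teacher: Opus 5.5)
Your proof is correct and follows the paper's argument. The paper compresses the first part into ``continuity of $\incl$ and the meet operation for regular closed sets'', and your identity $\Int_X\rho(\tC)=\Int_X C$ for closed $C$ is the detail it leaves implicit; the ``in particular'' step is identical. One small correction to your aside: closedness is needed for that intermediate identity but not for the lemma itself, since $\cl O\wedge\cl O'=\cl(O\cap O')$ for any open $O,O'$, so $\rho(\tC)\wedge\rho(\tC')=\rho(\tC\cap\tC')$ holds for arbitrary $\tC,\tC'$.
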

\begin{proof}
The first statement follows from the continuity of $\incl$ and the meet operation for regular closed sets. For $\tU,\tV\in\scR(\susp X)$, since $\tU\wedge\tV\subseteq\tU\cap\tV$, the definition of $\rho$ and the first statement give $\rho(\tU\wedge\tV)\subseteq\rho(\tU\cap\tV) =\rho(\tU)\wedge\rho(\tV).$
\end{proof}

\begin{lemma}
  \label{lem:regularized-preimage-boundary}
  The sets $\tE\in\scR(\susp X)$ satisfying
  \begin{equation}
    \Int_X\incl^{-1}\left(\tE\setminus\Int_{\susp X}(\tE)\right)=\emptyset
    \label{eq:boundary-preimage}
  \end{equation}
  form a bounded sublattice on which $\rho$ is a bounded lattice homomorphism. Condition~\eqref{eq:boundary-preimage} is equivalent to
  \[
    \rho(\tE)=\cl_X\left(\incl^{-1}(\Int_{\susp X}(\tE))\right).
  \]
  Moreover, every $\tE\in\scR(\susp X)$ satisfies
  \[
    \Int_X\incl^{-1}\left(\tE\setminus\Int_{\susp X}(\tE)\right)\subseteq\Int_X(G\cup r(G)).
  \]
\end{lemma}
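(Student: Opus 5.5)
We will use the shorthand $\partial\tE:=\tE\setminus\Int_{\susp X}(\tE)$ and $E:=\incl^{-1}(\tE)$; since $\tE$ is closed and $\incl$ is continuous, $E$ is closed in $X$. The plan has four steps: prove the equivalent formulation, derive closure under the lattice operations, show that $\rho$ is a homomorphism, and finish with the localization near $G\cup r(G)$.

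\emph{Equivalence.} Continuity of $\incl$ makes $\incl^{-1}(\Int_{\susp X}\tE)$ an open subset of $E$, so it lies in $\Int_X(E)$. This always gives
\[
\cl_X\incl^{-1}(\Int_{\susp X}\tE)\subseteq\rho(\tE).
\]
Write $O:=\incl^{-1}(\Int_{\susp X}\tE)$, so that $\Int_X(E)=O\cup\bigl(\Int_X(E)\cap\incl^{-1}(\partial\tE)\bigr)$.
\begin{enumerate}[(a)]
  \item Assume \eqref{eq:boundary-preimage}. The set $\Int_X(E)\setminus\cl_X O$ is open and contained in $\incl^{-1}(\partial\tE)$, so it is empty. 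Hence $\Int_X(E)\subseteq\cl_X O$, and the displayed inclusion becomes equality.
  \item Conversely, assume equality. Any open $W\subseteq\incl^{-1}(\partial\tE)$ lies in $E$, hence in $\Int_X E\subseteq\rho(\tE)=\cl_X O$. But $W$ is open and disjoint from $O$, so it is disjoint from $\cl_X O$. Thus $W=\emptyset$.
\end{enumerate}

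\emph{Closure under operations.} The bounds hold because $\emptyset$ and $\susp X$ have empty boundary. For joins, $\partial(\tE\cup\tF)\subseteq\partial\tE\cup\partial\tF$. The preimage is a union of two closed sets, $\incl^{-1}(\partial\tE)$ and $\incl^{-1}(\partial\tF)$, each with empty interior, so by the Baire-type fact for closed sets the union has empty interior. The boundaries are closed because $\tE$ is closed. For meets, $\tE\wedge\tF=\cl(\Int\tE\cap\Int\tF)$, and its boundary lies in $\partial\tE\cup\partial\tF$: a point of $\tE\wedge\tF$ lying in both $\Int\tE$ and $\Int\tF$ lies in the open set $\Int\tE\cap\Int\tF\subseteq\tE\wedge\tF$, hence in its interior. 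The same argument then applies.

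\emph{Homomorphism.} Joins and bounds are handled by Lemma~\ref{lem:regularized-preimage-joins}, and Lemma~\ref{lem:regularized-preimage-meets} gives $\rho(\tE\wedge\tF)\subseteq\rho(\tE)\wedge\rho(\tF)$. For the reverse inclusion, the characterization gives $\rho(\tE)=\cl_X O_E$ with $O_E$ open, and similarly $\rho(\tF)=\cl_X O_F$. Then
\[
\rho(\tE)\wedge\rho(\tF)=\cl(\Int\cl O_E\cap\Int\cl O_F)=\cl(O_E\cap O_F),
\]
using the regular-open identity $\cl(\Int\cl A\cap\Int\cl B)=\cl(A\cap B)$ for open $A,B$. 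Next, $O_E\cap O_F=\incl^{-1}(\Int\tE\cap\Int\tF)$. This set is open and contained in $\incl^{-1}(\tE\wedge\tF)$, so it lies in $\Int_X\incl^{-1}(\tE\wedge\tF)$. Taking closures gives the reverse inclusion.

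\emph{Final inclusion.} This is the main point. Let $x\in X\setminus(G\cup r(G))$; we must show $\incl$ is a local homeomorphism onto an open set near $x$. The set $V:=X\setminus(G\cup r(G))$ is open, and $r(G)$ is compact. The preimage $\pi^{-1}(\incl(V))$ equals $\iota(V)$, which is open in $X'$ and avoids both $G\times\{0\}$ and the identified points $(g,1)$. Hence $\incl(V)$ is open in $\susp X$, and $\incl|_V$ is a homeomorphism onto it, since $\pi$ is injective on the saturated open set $\iota(V)$.

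Now take an open $W\subseteq\incl^{-1}(\partial\tE)$, and suppose some point of $W$ lies outside $G\cup r(G)$. Then $W\cap V\neq\emptyset$ is open, and $\incl(W\cap V)$ is open in $\susp X$ and contained in $\tE$. It therefore lies in $\Int_{\susp X}\tE$, contradicting that it also lies in $\partial\tE$. Hence $W\subseteq G\cup r(G)$, so $W\subseteq\Int_X(G\cup r(G))$, which is the claimed inclusion.

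The expected obstacle is justifying the openness of $\incl(V)$ in the quotient topology. This uses that $G\cup r(G)$ is closed, which follows from the compactness of $G$ and the continuity of $r$.
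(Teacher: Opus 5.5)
Your proof is correct and follows the paper's structure: the equivalence via the open set $\incl^{-1}(\Int_{\susp X}\tE)\subseteq\Int_X\incl^{-1}(\tE)$, closure under the operations via boundary inclusions, joins and bounds from Lemma~\ref{lem:regularized-preimage-joins}, and the localization argument that $\incl$ is an open embedding off the closed set $G\cup r(G)$. The only deviation is in the meet step. You bound the boundary of $\tE\wedge\tF$ directly and use the identity $\cl(\Int\cl A\cap\Int\cl B)=\cl(A\cap B)$ for open $A,B$, whereas the paper applies the equivalence to the closed set $\tE\cap\tF$ together with $\rho(\tC)\wedge\rho(\tC')=\rho(\tC\cap\tC')$, and then reads off the condition for $\tE\wedge\tF$ from the resulting chain of equalities.
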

\begin{proof}
For a closed set $\tC\subseteq\susp X$, set $C:=\incl^{-1}(\tC), C_0:=\incl^{-1}(\Int_{\susp X}(\tC)).$ Since $C_0$ is open in $X$ and $C_0\subseteq C$, we have $C_0\subseteq\Int_X(C)$, and $C\setminus C_0=\incl^{-1}\left(\tC\setminus\Int_{\susp X}(\tC)\right).$ Thus $\Int_X(C\setminus C_0)=\emptyset \Longleftrightarrow \Int_X(C)\subseteq\cl_X(C_0).$
Thus we have $\cl_X\Int_X(C)=\cl_X(C_0)$, and by the definition of $\rho$, condition~\eqref{eq:boundary-preimage} is therefore equivalent to $\rho(\tC)=\cl_X\left(\incl^{-1}(\Int_{\susp X}(\tC))\right).$

Let $\tE,\tF$ satisfy~\eqref{eq:boundary-preimage}. Since
\[
(\tE\cup\tF)\setminus\Int_{\susp X}(\tE\cup\tF) \subseteq (\tE\setminus\Int_{\susp X}\tE)\cup (\tF\setminus\Int_{\susp X}\tF),
\]
and similarly
\[
(\tE\cap\tF)\setminus\Int_{\susp X}(\tE\cap\tF) \subseteq (\tE\setminus\Int_{\susp X}\tE)\cup (\tF\setminus\Int_{\susp X}\tF),
\]
the preimages of the sets on the left have empty interior, since the two preimages on the right are closed with empty interior. Hence both $\tE\cup\tF$ and $\tE\cap\tF$ satisfy~\eqref{eq:boundary-preimage}. In particular, Lemma~\ref{lem:regularized-preimage-joins} implies that $\tE\vee\tF=\tE\cup\tF$ satisfies the condition. Now, using  Lemma~\ref{lem:regularized-preimage-meets},
\[
\begin{aligned}
\rho(\tE)\wedge\rho(\tF)
&=\rho(\tE\cap\tF)
=\cl_X\incl^{-1}\!\left(\Int_{\susp X}(\tE\cap\tF)\right)\\
&\subseteq\cl_X\incl^{-1}\!\left(\Int_{\susp X}(\tE\wedge\tF)\right)
\subseteq\rho(\tE\wedge\tF)
\subseteq\rho(\tE)\wedge\rho(\tF),
\end{aligned}
\]
where $\Int_{\susp X}(\tE\cap\tF)\subseteq\Int_{\susp X}(\tE\wedge\tF)$. Thus $\rho(\tE\wedge\tF)=\rho(\tE)\wedge\rho(\tF)$, and $\tE\wedge\tF$ also satisfies~\eqref{eq:boundary-preimage}. Since $\emptyset$ and $\susp X$ clearly satisfy the condition, they form a bounded sublattice. Lemma~\ref{lem:regularized-preimage-joins} and the equality above show that $\rho$ restricts to a bounded lattice homomorphism.

Finally, let $B:=\incl^{-1}(\tE\setminus\Int_{\susp X}(\tE))$. Away from $G\cup r(G)$, the quotient construction makes $\incl$ locally an open embedding. Thus, if $x\in\Int_X(B)\setminus(G\cup r(G))$, some open neighborhood $V$ of $x$ satisfies $\incl(V)\subseteq\tE\setminus\Int_{\susp X}(\tE)$ with $\incl(V)$ open in $\susp X$, a contradiction. Hence $\Int_X(B)\subseteq G\cup r(G)$, and since $\Int_X(B)$ is open, $\Int_X\incl^{-1}\left(\tE\setminus\Int_{\susp X}(\tE)\right) \subseteq\Int_X(G\cup r(G)).$
\end{proof}

Thus every element of $\scR(\susp X)$ satisfies~\eqref{eq:boundary-preimage} if $\Int_X(G\cup r(G))=\emptyset$.

When $\Int_X(G\cup r(G))\neq\emptyset$, $\rho$ may fail to preserve meets.
\begin{example}
  \label{ex:regularized-preimage-meet}
  Let $X=G=\setof{g}$ and $r(g)=g$, so that $\susp X \cong S^1$. Define
  \[
    \tU:=\pi\left(\setof{g}\times[0,1/3]\right),
    \quad
    \tV:=\pi\left(\setof{g}\times[2/3,1]\right).
  \]
  Note that $\tU$ and $\tV$ are regular closed and satisfy $\tU\cap\tV=\setof{\incl(g)}$. Note that
  \[
    \tU\wedge\tV
    =\rhosusp(\tU\cap\tV)
    =\rhosusp\left(\setof{\incl(g)}\right)
    =\emptyset.
  \]
  Together with $\rho(\tU)=\rho(\tV)=X$, this yields
  \[
    \rho(\tU\wedge\tV)=\emptyset\neq X=\rho(\tU)\wedge\rho(\tV).
  \]
\end{example}
\begin{lemma}
  \label{lem:regularized-preimage-lattice}
  The map $\rho:\scR(\susp X)\to\scR(X)$ is a lattice epimorphism if and only if $\Int_X(G\cup r(G))=\emptyset$.
\end{lemma}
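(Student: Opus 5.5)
The proof splits into two parts. First, surjectivity of $\rho$ onto $\scR(X)$ holds unconditionally. Second, meets are preserved exactly when $\Int_X(G\cup r(G))=\emptyset$. Joins and bounds are always preserved by Lemma~\ref{lem:regularized-preimage-joins}.

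\emph{Surjectivity.} Given $F\in\scR(X)$, I would use the sets $\Wab$ from Lemma~\ref{lem:Wab}.
\begin{enumerate}[(1)]
  \item $X'$ is compact and $\susp X$ is Hausdorff by Theorem~\ref{thm:hybrid-suspension-semiflow}, so $\pi$ is a closed map. The three pieces $\incl(F)$, $\pi((F\cap G)\times[0,a])$ and $\pi(r^{-1}(F)\times[b,1])$ are then closed, and hence $\Wab(F)$ is closed.
  \item Set $\tE:=\rhosusp(\Wab(F))\in\scR(\susp X)$. Since $\tE\subseteq\Wab(F)$, Lemma~\ref{lem:Wab}(i) gives $\incl^{-1}(\tE)\subseteq F$.
  \item Lemma~\ref{lem:Wab}(i) also gives $\incl(\Int F)\subseteq\Int_{\susp X}\Wab(F)\subseteq\tE$, so $\Int F\subseteq\Int_X\incl^{-1}(\tE)\subseteq F$.
  \item Taking closures and using $F=\cl\Int F$ yields $\rho(\tE)=F$.
\end{enumerate}

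\emph{Meets when $\Int_X(G\cup r(G))=\emptyset$.} By the last statement of Lemma~\ref{lem:regularized-preimage-boundary}, every $\tE\in\scR(\susp X)$ satisfies \eqref{eq:boundary-preimage}. So the sublattice of that lemma is all of $\scR(\susp X)$, and $\rho$ is a homomorphism on all of it. Together with surjectivity this gives an epimorphism.

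\emph{Failure of meets when $O:=\Int_X(G\cup r(G))\neq\emptyset$.} I would generalize Example~\ref{ex:regularized-preimage-meet}. By Lemma~\ref{lem:regularized-preimage-boundary}, it suffices to find $\tU,\tV\in\scR(\susp X)$ with $\tU\cap\tV$ having empty interior, hence $\tU\wedge\tV=\emptyset$, while $\rho(\tU)\wedge\rho(\tV)\neq\emptyset$. That is, $\Int_X\incl^{-1}(\tU)\cap\Int_X\incl^{-1}(\tV)$ should contain a nonempty open set. There are two cases.

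\emph{Case $O\setminus G\neq\emptyset$.} This set is open in $X$ and contained in $r(G)$. Choose a nonempty open $O'$ with $\cl O'\subseteq O\setminus G$, and set $K:=r^{-1}(\cl O')$.
\begin{itemize}
  \item Take $\tV:=\pi(K\times[2/3,1])$, so $\tV$ approaches $\incl(\cl O')$ from the fiber side.
  \item Take $\tU$ to be the closure of a small open collar of $\incl(O')$ on the base side, namely $\cl_{\susp X}\Int_{\susp X}$ of $\incl(\cl O')$ thickened only by base points. Points of $O'$ away from $G$ have base neighborhoods, but the fiber tops also accumulate there.
  \item Both sets should then contain $\incl(\cl O')$ in their preimage with nonempty interior, while $\tU\cap\tV\subseteq\incl(\cl O')$ has empty interior because every neighborhood meets the fibers below level $1$.
\end{itemize}

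\emph{Case $\Int_X G\neq\emptyset$.} Choose an open $W'$ with $\cl W'\subseteq\Int G$.
\begin{itemize}
  \item Take $\tU:=\pi(\cl W'\times[0,1/3])$. It is regular closed because $\pi(W'\times(0,1/3))$ is open, and $\incl^{-1}(\tU)=\cl W'$.
  \item For $\tV$, I would take a set meeting $\tU$ only along $\incl(\cl W')$ and containing base-side neighborhoods of $\incl(W')$ in the directions where they exist, namely fiber tops $\pi(g,s)$ with $s$ near $1$ and $r(g)\in W'$. If $r^{-1}(W')=\emptyset$, the interior of $G$ near $W'$ consists purely of fiber bottoms. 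In that subcase I would instead split each fiber as in the circle example, using $[0,1/3]$ versus a set built from $[1/3,\,\cdot\,]$ and a collar.
\end{itemize}

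\emph{Main obstacle.} Constructing $\tV$ in the second case so that $\incl(W')$ lies in the interior of its preimage is the hard step. Near such points the only ways into $\susp X$ are fiber bottoms and fiber tops of points $g$ with $r(g)$ near $x$. I expect to handle it by a subcase split on whether $W'\cap r(G)$ has interior. When it does, use tops via $r^{-1}$ as in the first case. When it does not, shrink $W'$ to avoid $r(G)$ and conclude that $\incl(W')$ is interior to $\tU$ itself; meets then genuinely cannot fail there, and one must instead locate a different open set where they do.
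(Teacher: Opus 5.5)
Your surjectivity argument via $\rhosusp(\Wab(F))$ is correct. It reuses the section of Proposition~\ref{prop:ANbhd-epi} instead of the paper's metric set $W$, and your ``if'' direction is the paper's argument. The gap is in the ``only if'' direction.

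\emph{First case.} Here $O'\subseteq r(G)$ is open, so each $\incl(x)$ with $x\in O'$ equals $\pi(g,1)$ for some $g$ and is a limit of the fiber points $\pi(g,s)$, $s<1$. Consequently, the interior in $\susp X$ of any subset of $\incl(X)$ consists only of points $\incl(y)$ with $y\notin G\cup r(G)$. These points do not accumulate on $\incl(O')$, because $O'\subseteq r(G)$ is open. Your base-only collar $\tU$ therefore satisfies $\incl^{-1}(\tU)\cap O'=\emptyset$, so $\rho(\tU)$ misses $O'$. The same fact that makes $\tU\cap\tV$ thin also empties $\tU$ there.

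\emph{Second case.} The bottom-versus-top construction, as in Example~\ref{ex:ANbhd-preimage-meet}, works only where $\Int_X G\cap\Int_X r(G)\neq\emptyset$. Your assertion that meets ``genuinely cannot fail'' near $\incl(W')$ for $W'\subseteq\Int_X G\setminus r(G)$ is false. For instance, with $X=G=[0,1]$ and $r\equiv0$ there is no two-sided configuration anywhere, so your strategy cannot produce a witness, yet the lemma says $\rho$ fails to preserve meets.

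The missing idea is that a one-sided attachment already suffices, via an oscillating (comb) set. For a nonempty open $O\subseteq\Int_X G$, let $J:=\bigcup_{n\geq1}(2^{-2n},2^{-2n+1})$ and $\tE:=\cl_{\susp X}\pi(O\times J)$. Both $J$ and $(0,1)\setminus\cl J$ accumulate at $0$, so every $\incl(x)$ with $x\in O$ lies in $\tE\cap\tE^*$, while $\tE\wedge\tE^*=\emptyset$. Since $O$ is open, this gives $O\subseteq\rho(\tE)\wedge\rho(\tE^*)$ but $\rho(\tE\wedge\tE^*)=\emptyset$, which is exactly the witness your reduction asks for. If instead $O\subseteq\Int_X r(G)$, use $\cl_{\susp X}\pi(r^{-1}(O)\times(1-J))$; one of the two cases occurs because $G$ and $r(G)$ are closed. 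The paper uses the same set but argues through complement preservation: a bounded lattice epimorphism of Boolean algebras preserves ${}^*$, which forces \eqref{eq:boundary-preimage} for every $\tE\in\scR(\susp X)$, and the oscillating set violates it.
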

\begin{proof}
We first note that $\rho$ is surjective. Consider a compatible metric $d$ on $\susp X$. Let $U\in\scR(X)$ and set $O:=\Int_X(U)$. For $U\neq\emptyset,X$, define
\[
  W:=\setdef{y\in\susp X}
  {d(y,U)<d(y,X\setminus O)}.
\]
Then $W$ is open, $W\cap X=O$, and $\cl_{\susp X}(W)\cap X=U$: indeed, $O\subseteq W$ and $\cl_X(O)=U$, while if $x\in X\setminus U$, then $d(x,U)>0=d(x,X\setminus O)$, so a neighborhood of $x$ is disjoint from $W$. Hence $\tU:=\cl_{\susp X}(W)\in\scR(\susp X)$ satisfies $\incl^{-1}(\tU)=U$, and therefore $\rho(\tU)=U$. The cases $U=\emptyset,X$ are immediate. Suppose now that $\Int_X(G\cup r(G))=\emptyset$. By Lemma~\ref{lem:regularized-preimage-boundary}, every element of $\scR(\susp X)$ satisfies~\eqref{eq:boundary-preimage}, so $\rho$ preserves meets. Together with Lemma~\ref{lem:regularized-preimage-joins} and surjectivity, $\rho$ is a lattice epimorphism.

Conversely, suppose that $\rho$ is a lattice epimorphism. Since $\scR(\susp X)$ and $\scR(X)$ are Boolean algebras, so joins, meets, and bounds are perserved, and then complements also do. Thus, for $\tE\in\scR(\susp X)$, $\rho(\tE)=\rho(\tE^*)^*.$ Set $C:=\incl^{-1}(\tE^*)$. Since $C$ is closed and $X\setminus C=\incl^{-1}(\Int_{\susp X}(\tE))$, we obtain
\[
  \rho(\tE) =\bigl(\cl_X\Int_X(C)\bigr)^* =\cl_X(X\setminus C) =\cl_X\incl^{-1}(\Int_{\susp X}(\tE)).
\]
Lemma~\ref{lem:regularized-preimage-boundary} therefore implies that every $\tE\in\scR(\susp X)$ satisfies~\eqref{eq:boundary-preimage}. Suppose, to the contrary, that $\Int_X(G\cup r(G))\neq\emptyset$. Since $G$ and $r(G)$ are closed, either $\Int_X(G)\neq\emptyset$ or $\Int_X(r(G))\neq\emptyset$. Assume first that $\Int_X(G)\neq\emptyset$ and choose a nonempty open set $O\subseteq\Int_X(G)$. Let
\[
  J:=\bigcup_{n\geq1}(2^{-2n},2^{-2n+1}), \text{ such that } \tE:=\cl_{\susp X}\pi(O\times J).
\]
Since $\pi$ restricts to a homeomorphism from $G\times(0,1)$ onto an open subset of $\susp X$, the set $\pi(O\times J)$ is open and hence $\tE$ is regular closed. Both $J$ and $(0,1)\setminus\cl J$ accumulate at $0$, so for every $x\in O$, $\incl(x)\in\tE\setminus\Int_{\susp X}(\tE).$ Thus $O\subseteq \incl^{-1}\left(\tE\setminus\Int_{\susp X}(\tE)\right),$ contradicting~\eqref{eq:boundary-preimage}. 
A similar argument works at $s=1$ if $\Int_X(r(G))\neq\emptyset$, by taking a nonempty open set $O\subseteq\Int_X(r(G))$ and $\tE:=\cl_{\susp X}\pi\left(r^{-1}(O)\times(1-J)\right).$ Therefore $\Int_X(G\cup r(G))=\emptyset$.
\end{proof}

\begin{lemma}
  \label{lem:regularized-preimage-omega}
  If $\tU\in\sANbhd(\susp X,\Phi)$, then $\rho(\tU)\in\sANbhdR(X,\cH)$. Moreover, $\omega(\rho(\tU),\cH)=\incl^{-1}\left(\omega(\tU,\Phi)\right)$.
\end{lemma}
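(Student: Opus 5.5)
The plan is to compare $\rho(\tU)$ with two sets on the suspension whose $\omega$-limits we control: the open set $\incl^{-1}(\Int_{\susp X}\tU)$ from below, and the closure $\cl_{\susp X}\tU$ from above. Throughout, set $\tA:=\omega(\tU,\Phi)$ and $A:=\incl^{-1}(\tA)$. By Lemma~\ref{lem:omega-preimage}, $A$ is forward invariant and $\omega(A,\cH)=A$. Both claims then reduce to the two inclusions
\[
A\subseteq\Int_X\rho(\tU)
\qquad\text{and}\qquad
\omega(\rho(\tU),\cH)\subseteq A.
\]

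\textbf{Lower bound.} Since $\tU$ is an attracting neighborhood, $\tA\subseteq\Int_{\susp X}\tU$. Set $O:=\incl^{-1}(\Int_{\susp X}\tU)$. Continuity of $\incl$ makes $O$ open in $X$, and clearly $O\subseteq\incl^{-1}(\tU)$. Hence $O\subseteq\Int_X\incl^{-1}(\tU)$. An open set lies in the interior of its closure, so
\[
A\subseteq O\subseteq\Int_X\cl_X\Int_X\incl^{-1}(\tU)=\Int_X\rho(\tU).
\]
In particular $A\subseteq\rho(\tU)$. Monotonicity from Proposition~\ref{prop:hybrid-omega-calculus} together with $\omega(A,\cH)=A$ then gives $A\subseteq\omega(\rho(\tU),\cH)$. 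Also $\rho(\tU)$ is regular closed by construction.

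\textbf{Upper bound.} Since $\rho(\tU)\subseteq\cl_X\incl^{-1}(\tU)$ and $\incl$ is continuous, we have $\incl(\rho(\tU))\subseteq\cl_{\susp X}\tU$. Proposition~\ref{prop:omega-preimage-identity} and monotonicity of $\omega(\cdot,\Phi)$ then yield
\[
\omega(\rho(\tU),\cH)=\incl^{-1}\bigl(\omega(\incl(\rho(\tU)),\Phi)\bigr)\subseteq\incl^{-1}\bigl(\omega(\cl_{\susp X}\tU,\Phi)\bigr).
\]
So it remains to show $\omega(\cl_{\susp X}\tU,\Phi)\subseteq\omega(\tU,\Phi)$.

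This is where I expect the only real care to be needed, because $\tU$ is not assumed closed. I would use continuity of $\Phi(t,\cdot)$ for each fixed $t\ge0$, which gives $\Phi(t,\cl\tU)\subseteq\cl\,\Phi(t,\tU)$. Taking the union over $t\geq T$,
\[
\Phi([T,\infty),\cl\tU)\subseteq\cl\,\Phi([T,\infty),\tU)\qquad\text{for every }T\ge0.
\]
Taking closures and intersecting over $T$ gives $\omega(\cl\tU,\Phi)\subseteq\omega(\tU,\Phi)=\tA$. Combined with the display above, $\omega(\rho(\tU),\cH)\subseteq A$.

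Together the two bounds give $\omega(\rho(\tU),\cH)=A=\incl^{-1}(\omega(\tU,\Phi))$. Since also $A\subseteq\Int_X\rho(\tU)$, this shows $\rho(\tU)\in\sANbhdR(X,\cH)$.
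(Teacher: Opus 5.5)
Your proof is correct and follows essentially the same route as the paper. Both arguments bound $\omega(\rho(\tU),\cH)$ above via Proposition~\ref{prop:omega-preimage-identity} and $\incl(\rho(\tU))\subseteq\cl_{\susp X}\tU$, use $\incl^{-1}(\Int_{\susp X}\tU)\subseteq\Int_X\rho(\tU)$, and conclude with monotonicity and $\omega(A,\cH)=A$. The differences are minor: you justify $\omega(\cl_{\susp X}\tU,\Phi)\subseteq\omega(\tU,\Phi)$, which the paper uses without comment, and you get $\omega(A,\cH)=A$ directly from Lemma~\ref{lem:omega-preimage} rather than from Theorem~\ref{thm:main-correspondence}.
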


\begin{proof}
Let $\tA:=\omega(\tU,\Phi), A:=\incl^{-1}(\tA), U:=\incl^{-1}(\tU).$  Since $\tU$ is attracting, $\omega(\tU,\Phi) \subseteq \Int_{\susp X}\tU.$ By Proposition~\ref{prop:omega-preimage-identity},
\[
\begin{aligned}
  \omega(\rho(\tU),\cH) &=\incl^{-1}\left(\omega(\incl(\rho(\tU)),\Phi)\right) \subseteq \incl^{-1}\left(\omega(\cl_{\susp X}\tU,\Phi)\right) =\incl^{-1}\left(\omega(\tU,\Phi)\right)\\
  &\subseteq\incl^{-1}(\Int_{\susp X}\tU)
  \subseteq\Int_X(U)
  \subseteq\Int_X(\rho(\tU)).
\end{aligned}
\]
Thus $\rho(\tU)\in\sANbhdR(X,\cH)$. Now, by Theorem~\ref{thm:main-correspondence}, $A\in\sAtt(\cH)$, so $\omega(A,\cH)=A$. Hence monotonicity gives $A=\incl^{-1}\left(\omega(\tU,\Phi)\right)\subseteq\omega(\rho(\tU),\cH),$ and therefore $ \omega(\rho(\tU),\cH) =\incl^{-1}\left(\omega(\tU,\Phi)\right).$
\end{proof}

\subsection{Attracting neighborhoods}

Recall that a \emph{section} of a map $f:E\to B$ is a map $s:B\to E$ satisfying $f\circ s=\id_B$.

\begin{proposition}
  \label{prop:ANbhd-epi}
  The map $\rho:\sANbhdR(\susp X,\Phi)\to\sANbhdR(X,\cH)$ is surjective and preserves joins and bounds. For every $0<b<a<1$, the map
  \[
    \sANbhdR(X,\cH)\to\sANbhdR(\susp X,\Phi),
    \quad
    U\longmapsto\rhosusp(\Wab(U)),
  \]
  is a section of $\rho$.
\end{proposition}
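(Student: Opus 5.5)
The argument splits into three steps: the map is well defined, it preserves joins and bounds, and $U\mapsto\rhosusp(\Wab(U))$ is a section. Surjectivity then follows from the section property.

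\emph{Well-definedness.} For $\tU\in\sANbhdR(\susp X,\Phi)$, Lemma~\ref{lem:regularized-preimage-omega} already gives $\rho(\tU)\in\sANbhdR(X,\cH)$, so $\rho$ maps into the right lattice.

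\emph{Joins and bounds.} Joins in both $\sANbhdR$ lattices are unions, as they are in $\scR$, so Lemma~\ref{lem:regularized-preimage-joins} gives preservation of joins. For the bounds, $\rho(\emptyset)=\emptyset$ and $\rho(\susp X)=X$, and $\emptyset$, $\susp X$, $X$ are the bounds of the respective lattices.

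\emph{The section.} Fix $U\in\sANbhdR(X,\cH)$ and set $A:=\omega(U,\cH)$ and $\tW:=\rhosusp(\Wab(U))$. I would check two things.

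First, $\tW$ is a regular closed attracting neighborhood for $\Phi$. It is regular closed by construction. Lemma~\ref{lem:Wab}(ii), together with Proposition~\ref{prop:omega-preimage-identity} and Lemma~\ref{lem:omega-preimage}, gives $\omega(\Wab(U),\Phi)=\susp(A)$. Since $A$ is forward invariant (Proposition~\ref{prop:hybrid-omega-calculus}) and $A\subseteq\Int_X(U)$, Lemma~\ref{lem:Wab}(iii) gives $\susp(A)\subseteq\Int_{\susp X}\Wab(U)\subseteq\Int_{\susp X}\tW$. It remains to show $\omega(\tW,\Phi)\subseteq\susp(A)$. Here $\tW\subseteq\cl_{\susp X}\Wab(U)$, so I would bound the $\omega$-limit of the closure. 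For a semiflow on a compact space, $\omega(\cl E,\Phi)$ can be larger than $\omega(E,\Phi)$ in general. The cleanest route is to note that the three pieces of $\Wab(U)$ have closures of the same shape, because $U$ is closed, $U\cap G$ is closed, and $r^{-1}(U)$ is closed by continuity of $r$. Hence $\Wab(U)$ is itself closed when $U$ is closed: $\pi$ is a closed quotient map, and $\incl(U)\cup\pi((U\cap G)\times[0,a])\cup\pi(r^{-1}(U)\times[b,1])$ is the image of a closed saturated-compatible set in the compact space $X'$. So $\tW\subseteq\Wab(U)$, and monotonicity finishes this step. I expect this closedness of $\Wab(U)$ to be the main point to verify. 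If the saturation argument is delicate, a fallback is to write $\Wab(U)=\pi(K)$ with $K$ compact in $X'$, which is immediate since images of compact sets are compact and hence closed in the Hausdorff space $\susp X$.

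Second, $\rho(\tW)=U$. By Lemma~\ref{lem:Wab}(i), $\incl(\Int U)\subseteq\Int_{\susp X}\Wab(U)\subseteq\tW$, and $\incl^{-1}(\tW)\subseteq\incl^{-1}(\Wab(U))=U$. Hence $\Int_X U\subseteq\incl^{-1}(\tW)\subseteq U$. Taking interiors gives $\Int_X\incl^{-1}(\tW)=\Int_X U$, and taking closures gives $\rho(\tW)=\cl\Int U=U$, using that $U$ is regular closed.

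This shows $\rho\circ(\rhosusp\circ\Wab)=\id$ on $\sANbhdR(X,\cH)$, which yields surjectivity.
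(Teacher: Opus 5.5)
Your proposal is correct and follows the paper's proof essentially step for step. Closedness of $\Wab(U)$ gives $\rhosusp(\Wab(U))\subseteq\Wab(U)$, Lemma~\ref{lem:Wab}(ii),(iii) then give the attracting property, and Lemma~\ref{lem:Wab}(i) yields $\Int_X U\subseteq\incl^{-1}(\rhosusp(\Wab(U)))\subseteq U$, which suffices for $\rho(\rhosusp(\Wab(U)))=U$ (the paper proves equality with $U$, but you do not need it).

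One aside is inaccurate, though harmless. For a continuous semiflow, $\Phi(t,\cl E)\subseteq\cl\Phi(t,E)$, so $\omega(\cl E,\Phi)=\omega(E,\Phi)$ always holds, and the paper uses this in Lemma~\ref{lem:regularized-preimage-omega}. Closedness of $\Wab(U)$ is therefore genuinely needed only for the inclusion $\incl^{-1}(\rhosusp(\Wab(U)))\subseteq U$.
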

\begin{proof}
By Lemma~\ref{lem:regularized-preimage-omega}, $\rho:\sANbhdR(\susp X,\Phi)\to\sANbhdR(X,\cH)$ is well-defined. Lemma~\ref{lem:regularized-preimage-joins} implies that it preserves joins and bounds. Fix $0<b<a<1$ and let $U\in\sANbhdR(X,\cH)$. Set $A:=\omega(U,\cH), \tU:=\Wab(U), \tV:=\rhosusp(\tU).$ By Lemma~\ref{lem:Wab}(ii), Proposition~\ref{prop:omega-preimage-identity}, and Lemma~\ref{lem:omega-preimage}, $\omega(\tU,\Phi)=\susp(A).$ Since $A\subseteq\Int_X(U)$ is forward invariant, Lemma~\ref{lem:Wab}(iii) gives $\susp(A)\subseteq\Int_{\susp X}(\tU).$ Moreover, $U$ is closed, hence $\tU$ is closed, so $\tV=\cl_{\susp X}\Int_{\susp X}(\tU)\subseteq\tU.$ Thus
\[
  \omega(\tV,\Phi)
  \subseteq\omega(\tU,\Phi)
  =\susp(A)
  \subseteq\Int_{\susp X}(\tU)
  \subseteq\Int_{\susp X}(\tV).
\]
Hence $\tV\in\sANbhdR(\susp X,\Phi)$.

By Lemma~\ref{lem:Wab}(i), $\incl^{-1}(\tU)=U, \incl(\Int_X(U))\subseteq\Int_{\susp X}(\tU).$ Since $U=\cl_X\Int_X(U)$, $\incl(U) \subseteq\cl_{\susp X}\incl(\Int_X(U)) \subseteq\cl_{\susp X}\Int_{\susp X}(\tU)=\tV.$ Together with $\tV\subseteq\tU$, this gives $U\subseteq\incl^{-1}(\tV) \subseteq\incl^{-1}(\tU)=U.$ Therefore $\incl^{-1}(\tV)=U$, and hence
\[
  \rho(\tV)
  =\cl_X\Int_X\left(\incl^{-1}(\tV)\right)
  =\cl_X\Int_X(U)
  =U.
\]
Thus $\rho\circ(\rhosusp\circ\Wab)=\id_{\sANbhdR(X,\cH)},$ $U\mapsto\rhosusp(\Wab(U))$ is a section of $\rho$, and in particular $\rho$ is surjective.
\end{proof}

\begin{remark}
  For arbitrary attracting neighborhoods, $\tU\mapsto\incl^{-1}(\tU)$ preserves unions and intersections and has section $U\mapsto\Wab(U)$. We use $\rho$ to remain in the regular closed lattice. Note that $\rho$ may fail to be injective, and its section $U\mapsto\rhosusp(\Wab(U))$ may fail to preserve meets.
\end{remark}

Even for regular closed attracting neighborhoods, $\rho$ may fail to preserve meets, as the following example shows.
\begin{example}
  \label{ex:ANbhd-preimage-meet}
  Let $X=\setof{0,1,2}$, $G=\setof{0,1}$, $r(0)=1$, and $r(1)=2$, and let $\varphi(t,2)=2$ for every $t\geq0$. Note that $\cH=(X,\varphi,G,r)$ satisfies the trapping guard condition. Note that $\susp X \cong [0,2]$ so that $\incl(k)=k$ for $k\in X$ and $\Phi(t,s)=\min\setof{s+t,2}$. Observe that the sets
  \[
    \tU:=[0,1]\cup[3/2,2],
    \quad
    \tV:=[1,2]
  \]
  are regular closed attracting neighborhoods, since $\omega(\tU,\Phi)=\omega(\tV,\Phi)=\setof{2}\subseteq\Int_{\susp X}(\tU\cap\tV)$, and their intersection is $\setof{1}\cup[3/2,2]$, so
  \[
    \tU\wedge\tV=\rhosusp\left(\setof{1}\cup[3/2,2]\right)=[3/2,2].
  \]
  Finally, note that $\rho(\tU)=X$ and $\rho(\tV)=\setof{1,2}$, while
  \[
    \rho(\tU\wedge\tV)=\setof{2}\neq\setof{1,2}=\rho(\tU)\wedge\rho(\tV).
  \]
\end{example}

\begin{theorem}
  \label{thm:ANbhd-meets}
  The following statements are equivalent.
  \begin{enumerate}[(i)]
    \item The map $\rho:\sANbhdR(\susp X,\Phi)\surjection\sANbhdR(X,\cH)$ is a bounded lattice epimorphism.
    \item Every $\tU\in\sANbhdR(\susp X,\Phi)$ satisfies~\eqref{eq:boundary-preimage}.
    \item For every $A\in\sAtt(\cH)$,
    \[
      \Int_X(G\cup r(G))\cap\setdef{x\in X}{\omega(x,\cH)\subseteq A}\subseteq A.
    \]
  \end{enumerate}
\end{theorem}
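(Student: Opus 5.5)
We prove the cycle (ii)$\Rightarrow$(i)$\Rightarrow$(iii)$\Rightarrow$(ii).

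\emph{(ii)$\Rightarrow$(i).} By Lemma~\ref{lem:regularized-preimage-boundary}, the elements of $\scR(\susp X)$ satisfying~\eqref{eq:boundary-preimage} form a bounded sublattice on which $\rho$ is a bounded lattice homomorphism. Under (ii), $\sANbhdR(\susp X,\Phi)$ lies in this sublattice. Since it is itself a bounded sublattice of $\scR(\susp X)$, $\rho$ preserves its meets and joins. Surjectivity comes from Proposition~\ref{prop:ANbhd-epi}.

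\emph{(iii)$\Rightarrow$(ii).} Fix $\tU\in\sANbhdR(\susp X,\Phi)$. Put $A:=\incl^{-1}(\omega(\tU,\Phi))$, which lies in $\sAtt(\cH)$ by Theorem~\ref{thm:main-correspondence}, and $B:=\incl^{-1}(\tU\setminus\Int_{\susp X}\tU)$.
\begin{enumerate}[(a)]
  \item Lemma~\ref{lem:regularized-preimage-boundary} gives $\Int_X B\subseteq\Int_X(G\cup r(G))$.
  \item For $x\in B$ we have $\incl(x)\in\tU$. Proposition~\ref{prop:omega-preimage-identity} and monotonicity then give $\omega(x,\cH)\subseteq A$.
  \item By (iii), $\Int_X B\subseteq A\subseteq\incl^{-1}(\Int_{\susp X}\tU)$.
  \item The last set is disjoint from $B$, so $\Int_X B=\emptyset$.
\end{enumerate}
This is the short direction.

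\emph{(i)$\Rightarrow$(iii), by contrapositive.} Suppose $A\in\sAtt(\cH)$ and $x\in\Int_X(G\cup r(G))$ satisfy $\omega(x,\cH)\subseteq A$ and $x\notin A$. Let $\tA:=\susp(A)$ be the corresponding attractor of $\Phi$, and let $\cB(\tA)$ be its basin, which is open.

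First, I would show that $\setdef{y\in X}{\omega(y,\cH)\subseteq A}=\incl^{-1}(\cB(\tA))$. This uses Proposition~\ref{prop:omega-preimage-identity}, together with the fact that $\omega(\incl(y),\Phi)$ is invariant and hence of the form $\susp(K)$ by Lemma~\ref{lem:omega-preimage}. Consequently this set is open.

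Since $G$ and $r(G)$ are closed, $\Int_X G\cup\Int_X r(G)$ is dense in $\Int_X(G\cup r(G))$. So after moving $x$ slightly I may assume $x\in\Int_X G$; the case $\Int_X r(G)$ is symmetric, using $1-J$ at $s=1$ as in Lemma~\ref{lem:regularized-preimage-lattice}. Choose a nonempty open $O\subseteq\Int_X G\cap\incl^{-1}(\cB(\tA))$ whose closure is disjoint from $A$.

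Next, take the compact set $\tK:=\pi(\cl O\times[0,1])$, which lies in $\cB(\tA)$ because $\cB(\tA)$ is forward invariant. Choose a regular closed neighborhood $\tN$ of $\tA$ with:
\begin{enumerate}[(a)]
  \item $\tN\subseteq\cB(\tA)$;
  \item $\omega(\tN,\Phi)=\tA\subseteq\Int\tN$;
  \item $\tN\cap\pi(\cl O\times[0,\varepsilon])=\emptyset$ for some small $\varepsilon>0$.
\end{enumerate}
Property (c) is possible because $\incl(\cl O)$ is disjoint from $\tA$ and $\tA$ is compact. With $J$ as in Lemma~\ref{lem:regularized-preimage-lattice}, rescaled into $(0,\varepsilon)$, and $J':=(0,\varepsilon)\setminus\cl J$, set
\[
  \tU:=\tN\cup\cl\,\pi(O\times J),\quad \tV:=\tN\cup\cl\,\pi(O\times J').
\]
Both sets are regular closed and contained in a compact subset of $\cB(\tA)$. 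Uniform attraction of compact subsets of a basin then gives $\omega(\tU,\Phi),\omega(\tV,\Phi)\subseteq\tA\subseteq\Int\tN$, so both lie in $\sANbhdR(\susp X,\Phi)$.

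We have $O\subseteq\incl^{-1}(\tU)\cap\incl^{-1}(\tV)$, and $O$ is open, so $O\subseteq\Int_X\rho(\tU)\cap\Int_X\rho(\tV)$ and hence $O\subseteq\rho(\tU)\wedge\rho(\tV)$. On the other hand, the interiors of $\cl\pi(O\times J)$ and $\cl\pi(O\times J')$ are disjoint. Hence $\tU\wedge\tV\subseteq\tN$ near $\incl(O)$, which forces $\rho(\tU\wedge\tV)\cap O=\emptyset$. So $\rho$ fails to preserve meets, contradicting (i).

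The main obstacle is this last construction. It requires three things:
\begin{enumerate}[(a)]
  \item identifying the hybrid basin with $\incl^{-1}(\cB(\tA))$ in order to get openness;
  \item choosing $\tN$ so that it stays away from the fibers over $O$ near $s=0$;
  \item verifying that the added pieces $\cl\pi(O\times J)$ do not create new $\omega$-limit points outside $\Int\tN$.
\end{enumerate}
For (c) I would rely on the standard fact that an attractor of a continuous semiflow uniformly attracts compact subsets of its basin.
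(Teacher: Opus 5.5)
Your proposal is correct and follows the paper's proof essentially step for step. It uses the same cycle of implications and the same argument for (iii)$\Rightarrow$(ii). For the contrapositive of (i)$\Rightarrow$(iii) it uses the same construction: a regular closed attracting neighborhood $\tN$ of $\tA=\susp(A)$ that avoids $\pi(\cl_X O\times[0,\varepsilon])$, augmented by two combs $\cl\pi(O\times J)$ with disjoint interiors accumulating on $\incl(O)$. Your openness of the basin and uniform attraction of compact subsets replaces the paper's choice of $T$ and $P$ with $\Phi(T,\incl(\cl_X P))\subseteq\Int\tN_0$.

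One detail to tighten: choose $O$ with $\cl_X O\subseteq\Int_X G\cap\incl^{-1}(\cB(\tA))\setminus A$, not just $O\subseteq\Int_X G\cap\incl^{-1}(\cB(\tA))$. The set $\cl\pi(O\times J)$ contains $\incl(\cl_X O)$, and your uniform-attraction step needs these points to lie in the basin.
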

\begin{proof}
By Proposition~\ref{prop:ANbhd-epi}, the map $\rho$ is already surjective and preserves joins and bounds. Thus (i) is equivalent to preservation of meets.

First, $(ii)\Rightarrow(i)$ is immediate by Lemma~\ref{lem:regularized-preimage-boundary}.

Next, we prove $(iii)\Rightarrow(ii)$. Let $\tU\in\sANbhdR(\susp X,\Phi)$ and set $\tA:=\omega(\tU,\Phi), A:=\incl^{-1}(\tA).$ By Theorem~\ref{thm:main-correspondence}, $A\in\sAtt(\cH)$. Let $B:= \Int_X\incl^{-1} (\tU\setminus\Int_{\susp X}(\tU)).$ By Lemma~\ref{lem:regularized-preimage-boundary}, $B\subseteq\Int_X(G\cup r(G)).$
If $x\in B$, then $\incl(x)\in\tU$, and therefore
Proposition~\ref{prop:omega-preimage-identity} $\omega(x,\cH) \subseteq \incl^{-1}(\omega(\tU,\Phi))=A.$ Condition~(iii) therefore implies $x\in A$. On the other hand, since $\tU$ is attracting, $A =\incl^{-1}(\tA) \subseteq \incl^{-1}(\Int_{\susp X}\tU),$ which contradicts $x\in\incl^{-1}(\tU\setminus\Int_{\susp X}\tU)$. Hence $B=\emptyset$, so $\tU$ satisfies~\eqref{eq:boundary-preimage}.

Finally, suppose the contrary, so (iii) fails. Then there exist $A\in\sAtt(\cH)$ and $x\in\Int_X(G\cup r(G))\setminus A$ such that $\omega(x,\cH)\subseteq A$. Set $\tA:=\susp(A)$. By Theorem~\ref{thm:main-correspondence}, $\tA\in\sAtt(\susp X,\Phi)$, so choose $\tN_0\in\sANbhdR(\susp X,\Phi)$ with $\omega(\tN_0,\Phi)=\tA$. Since $\omega(\incl(x),\Phi)=\susp(\omega(x,\cH))\subseteq\tA\subseteq\Int_{\susp X}\tN_0$, there exist $T\geq0$ and a nonempty open neighborhood $P$ of $x$ such that $\cl_XP\subseteq\Int_X(G\cup r(G))\setminus A, \text{ and } \Phi(T,\incl(\cl_XP))\subseteq\Int_{\susp X}\tN_0.$ It follows that $\omega(\incl(\cl_XP),\Phi)\subseteq\tA$.

Since $P\subseteq G\cup r(G)$ and $G,r(G)$ are closed, the Baire category theorem gives a nonempty open set $O$ with $\cl_XO\subseteq P$ such that either $\cl_XO\subseteq G$ or $\cl_XO\subseteq r(G)$. In either case, $\omega(\incl(\cl_XO),\Phi)\subseteq\tA$. Assume first that $\cl_XO\subseteq G$. Since $\cl_XO\cap A=\emptyset$, choose $\epsilon>0$ such that $K:=\pi(\cl_XO\times[0,\epsilon])$ is disjoint from $\tA$. Choose $\tN\in\sANbhdR(\susp X,\Phi)$ such that $\omega(\tN,\Phi)=\tA$ and $\tN\cap K=\emptyset$. Define
\[
  J_0:=\bigcup_{n\geq1}(\epsilon2^{-2n},\epsilon2^{-2n+1}),
  \qquad
  J_1:=\bigcup_{n\geq1}(\epsilon2^{-2n-1},\epsilon2^{-2n}),
\]
and let $\tE_i:=\cl_{\susp X}\pi(O\times J_i)$ for $i=0,1$. Then $\tE_0,\tE_1$ are regular closed, $\incl(O)\subseteq\tE_i\setminus\Int_{\susp X}\tE_i, \text{ and } \Int_{\susp X}\tE_0\cap\Int_{\susp X}\tE_1=\emptyset.$ Moreover, $\tE_i\subseteq\Phi([0,\epsilon],\incl(\cl_XO))$, and hence $\omega(\tE_i,\Phi)\subseteq\tA$.
Set $\tU_i:=\tN\cup\tE_i$, $i=0,1$. Since $\tN\cap K=\emptyset$ and $\tE_i\subseteq K$, we have $\tU_0\wedge\tU_1=\tN$. Also,
\[
  \omega(\tU_i,\Phi)
  =\omega(\tN,\Phi)\cup\omega(\tE_i,\Phi)
  =\tA
  \subseteq\Int_{\susp X}\tN
  \subseteq\Int_{\susp X}\tU_i,
\]
so $\tU_i\in\sANbhdR(\susp X,\Phi)$.

Since $\incl(O)\subseteq\tU_i$ and $O$ is open, we have $O\subseteq\rho(\tU_i)$, and therefore $O\subseteq\rho(\tU_0)\wedge\rho(\tU_1).$ On the other hand, $\tN\cap\incl(O)=\emptyset$, so $O\cap\rho(\tN)=\emptyset$. Thus $\rho(\tU_0\wedge\tU_1) =\rho(\tN) \neq \rho(\tU_0)\wedge\rho(\tU_1),$ contradicting (i). If instead $\cl_XO\subseteq r(G)$, the same argument is applied at the $s=1$, using $\tE_i:=\cl_{\susp X}\pi(r^{-1}(O)\times(1-J_i))$. Hence (iii) holds.
\end{proof}

\begin{corollary}
  \label{cor:ANbhd-epi-nowhere-dense}
  If $\Int_X(G\cup r(G))=\emptyset$, the map $\rho:\sANbhdR(\susp X,\Phi)\surjection\sANbhdR(X,\cH)$ is a bounded lattice epimorphism.
\end{corollary}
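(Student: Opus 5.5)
This follows by combining Theorem~\ref{thm:ANbhd-meets} with Lemma~\ref{lem:regularized-preimage-boundary}. The plan is to verify condition~(ii) of Theorem~\ref{thm:ANbhd-meets}, namely that every $\tU\in\sANbhdR(\susp X,\Phi)$ satisfies~\eqref{eq:boundary-preimage}, and then invoke the implication (ii)$\Rightarrow$(i).

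For the verification, let $\tU\in\sANbhdR(\susp X,\Phi)$. In particular $\tU\in\scR(\susp X)$, so the final statement of Lemma~\ref{lem:regularized-preimage-boundary} gives
\[
  \Int_X\incl^{-1}\left(\tU\setminus\Int_{\susp X}(\tU)\right)\subseteq\Int_X(G\cup r(G))=\emptyset.
\]
Thus~\eqref{eq:boundary-preimage} holds for every such $\tU$.

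By Proposition~\ref{prop:ANbhd-epi}, $\rho$ is already surjective onto $\sANbhdR(X,\cH)$ and preserves joins and bounds. Lemma~\ref{lem:regularized-preimage-boundary} further says that $\rho$ preserves meets on the sublattice of sets satisfying~\eqref{eq:boundary-preimage}. By the previous paragraph, this sublattice contains all of $\sANbhdR(\susp X,\Phi)$, so $\rho$ is a bounded lattice epimorphism. Equivalently, one can cite (ii)$\Rightarrow$(i) of Theorem~\ref{thm:ANbhd-meets} directly.

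One can also check condition~(iii) of that theorem, which holds trivially because its left-hand side is empty. There is no real obstacle here; the only care needed is to note that regular closedness of $\tU$ is what allows the last part of Lemma~\ref{lem:regularized-preimage-boundary} to be applied.
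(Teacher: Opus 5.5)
Your proof is correct and is essentially the paper's argument. The paper cites Proposition~\ref{prop:ANbhd-epi} for surjectivity, joins, and bounds, and Lemma~\ref{lem:regularized-preimage-lattice} for meets. The ``if'' direction of that lemma is proved exactly by your observation: when $\Int_X(G\cup r(G))=\emptyset$, the last part of Lemma~\ref{lem:regularized-preimage-boundary} forces~\eqref{eq:boundary-preimage} for every regular closed set. You simply inline that step, or route it through Theorem~\ref{thm:ANbhd-meets}(ii)$\Rightarrow$(i), which changes nothing of substance.
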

\begin{proof}
  The map $\rho$ is surjective and preserves joins and bounds by Proposition~\ref{prop:ANbhd-epi}, and it preserves meets by Lemma~\ref{lem:regularized-preimage-lattice}.
\end{proof}

\begin{corollary}
  \label{cor:ANbhd-epi-necessary}
  If $\rho:\sANbhdR(\susp X,\Phi)\to\sANbhdR(X,\cH)$ preserves meets, then $\Int_X(G)\subseteq r(G)$ and $\Int_X(G\cup r(G))\subseteq\omega(X,\cH)$.
\end{corollary}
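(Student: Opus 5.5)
The plan is to use Theorem~\ref{thm:ANbhd-meets}: if $\rho$ preserves meets, then condition~(iii) of that theorem holds for every $A\in\sAtt(\cH)$. Both inclusions then follow by choosing two particular attractors.

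For the second inclusion, take $A=\omega(X,\cH)$, the greatest element of $\sAtt(\cH)$ by Proposition~\ref{prop:Att-lattice}. Every $x\in X$ satisfies $\omega(x,\cH)\subseteq\omega(X,\cH)$ by monotonicity (Proposition~\ref{prop:hybrid-omega-calculus}). Hence $\setdef{x\in X}{\omega(x,\cH)\subseteq A}=X$, and condition~(iii) gives directly
\[
  \Int_X(G\cup r(G))\subseteq\omega(X,\cH).
\]

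For the first inclusion, take $A=\emptyset$, the least element of $\sAtt(\cH)$. Condition~(iii) then says that no point of $\Int_X(G\cup r(G))$ has empty $\omega$-limit. Since $\susp X$ is compact and $\Phi$ is a continuous semiflow, $\omega(\incl(x),\Phi)\neq\emptyset$ for every $x$. However, Proposition~\ref{prop:omega-preimage-identity} only gives $\omega(x,\cH)=\incl^{-1}(\omega(\incl(x),\Phi))$, and this preimage can be empty, for example when the limit set lies inside an attached interval $\pi(G\times(0,1))$.

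Now let $g\in\Int_X(G)$ and suppose $g\notin r(G)$. I will show that $\omega(g,\cH)=\emptyset$, which contradicts condition~(iii) for $A=\emptyset$.
\begin{enumerate}[(1)]
  \item Since $\stoptime(g)=0$, the trajectory resets immediately to $r(g)$. The hybrid trajectory stays in $G$ as long as it continues to hit $G$.
  \item If $\omega(g,\cH)\neq\emptyset$, then by Lemma~\ref{lem:omega-preimage} the set $\susp(\omega(g,\cH))$ is a nonempty invariant subset of $\susp X$.
  \item Combining this with Lemma~\ref{lem:invariance-implies-fiber}, any base point of an invariant set lying in $G$ has its whole fiber in the set. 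I expect that the $\omega$-limit of $g$ must then be reached through $r$. This should force points of $\Int_X(G)$ in the limit to lie in $r(G)$.
\end{enumerate}

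I expect step (3) to be the main obstacle, and I am not sure the argument above works as stated. It is not clear that $g\in\Int_X(G)\setminus r(G)$ has empty hybrid $\omega$-limit; the limit is nonempty whenever the orbit eventually reaches a recurrent part of $X$. So a better way to get $\Int_X(G)\subseteq r(G)$ may be to apply condition~(iii) with a different attractor. If $g\in\Int_X(G)\setminus r(G)$, then $g$ is not in the image of any reset, so no full orbit of $\Phi$ passes through $\incl(g)$ except by starting there. Hence $g\notin\omega(X,\cH)$, because invariant sets containing $\incl(g)$ would need a backward orbit, and $\incl(g)$ can only be reached from $\pi(g,s)$ with $s<1$ inside its own fiber, that is, from $G\times\{1\}$ only via $r$. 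This contradicts the second inclusion just proved, and so gives $\Int_X(G)\subseteq r(G)$.

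The remaining point to verify is that any point of the invariant set $\omega(X,\Phi)$ has a backward orbit in it. The only preimages of $\incl(g)$ are then the points $\pi(g',1)$ with $r(g')=g$, together with base-flow points, and the latter are excluded for $g\in\Int_X(G)$ since orbits of the base flow cannot enter $\Int_X(G)$.
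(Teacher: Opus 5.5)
Your final argument is correct and is essentially the paper's proof. Condition~(iii) of Theorem~\ref{thm:ANbhd-meets} with $A=\omega(X,\cH)$ gives the second inclusion, and the first follows from $\Int_X(G)\cap\omega(X,\cH)\subseteq r(G)$. The paper reads that inclusion off the definition of hybrid trajectories, while you justify it by taking a $\Phi$-preimage of $\incl(g)$ in the invariant set $\omega(\incl(X),\Phi)$ and noting that base-flow endpoints lie in $\cl_X(X\setminus G)$, which misses $\Int_X(G)$; this is sound once your phrase about $\pi(g,s)$ in its own fiber is corrected to $\pi(g',s)$ with $r(g')=g$.

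Drop the $A=\emptyset$ detour entirely. By Lemma~\ref{lem:omega-preimage}, $\susp(\omega(x,\cH))=\omega(\incl(x),\Phi)$, which is nonempty by compactness of $\susp X$, so hybrid $\omega$-limits are never empty and that instance of (iii) is vacuous. Your side remark that the preimage can be empty when the limit set lies in $\pi(G\times(0,1))$ is therefore false.
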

\begin{proof}
By Proposition~\ref{prop:ANbhd-epi} and the hypothesis, $\rho$ is a bounded lattice epimorphism, so Theorem~\ref{thm:ANbhd-meets}(iii) applies. Taking $A=\omega(X,\cH)$ and using monotonicity gives $\Int_X(G\cup r(G))\subseteq\omega(X,\cH).$ Moreover, by the definition of hybrid trajectories, $\Int_X(G)\cap\omega(X,\cH)\subseteq r(G),$ and hence $\Int_X(G)\subseteq\omega(X,\cH)\cap\Int_X(G)\subseteq r(G).$
\end{proof}

The converse of Corollary~\ref{cor:ANbhd-epi-nowhere-dense} fails.
\begin{example}
  \label{ex:ANbhd-epi-without-nowhere-dense}
  Let $X=[0,2]$, $G=[1,2]$, $r=\id_G$, and $\varphi(t,x)=x+t$ for $x\in[0,1)$ and $0\leq t<1-x$. Then $\stoptime(x)=\max\setof{1-x,0}$, and TGC is satisfied. Since $\omega(x,\cH)=\setof{x}$ for every $x\in G$, condition (iii) of Theorem~\ref{thm:ANbhd-meets} holds. Thus $\rho:\sANbhdR(\susp X,\Phi)\surjection\sANbhdR(X,\cH)$ is a lattice epimorphism, although $\Int_X(G\cup r(G))=(1,2]$.
\end{example}

The section in Proposition~\ref{prop:ANbhd-epi} also yields a sublattice on which $\rho$ preserves meets without requiring $\Int_X(G\cup r(G))=\emptyset$.
\begin{proposition}
  \label{prop:constructed-ANbhd-epi}
  Fix $0<b<a<1$, and let $\tcD_{a,b}$ be the sublattice of $\sANbhdR(\susp X,\Phi)$ generated by
  \[
    \setdef{\rhosusp(\Wab(U))}{U\in\sANbhdR(X,\cH)}.
  \]
  Then the restriction
  \[
    \rho|_{\tcD_{a,b}}:\tcD_{a,b}\surjection\sANbhdR(X,\cH)
  \]
  is a bounded lattice epimorphism.
\end{proposition}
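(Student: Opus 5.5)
The plan is to combine Lemma~\ref{lem:regularized-preimage-boundary} with the section constructed in Proposition~\ref{prop:ANbhd-epi}. Let $\tcB\subseteq\scR(\susp X)$ be the family of sets satisfying~\eqref{eq:boundary-preimage}. By Lemma~\ref{lem:regularized-preimage-boundary}, $\tcB$ is a bounded sublattice of $\scR(\susp X)$ on which $\rho$ is a bounded lattice homomorphism. By Lemma~\ref{lem:regularized-preimage-omega} and Proposition~\ref{prop:ANbhd-epi}, $\sANbhdR(\susp X,\Phi)$ is a bounded sublattice of $\scR(\susp X)$. So $\tcB\cap\sANbhdR(\susp X,\Phi)$ is a bounded sublattice. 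If every generator $\rhosusp(\Wab(U))$ lies in $\tcB$, this sublattice contains $\tcD_{a,b}$, and hence $\rho|_{\tcD_{a,b}}$ preserves joins, meets and bounds. Surjectivity then comes for free: the section $U\mapsto\rhosusp(\Wab(U))$ of Proposition~\ref{prop:ANbhd-epi} takes values in the generating set, so every $U\in\sANbhdR(X,\cH)$ equals $\rho(\rhosusp(\Wab(U)))$ with $\rhosusp(\Wab(U))\in\tcD_{a,b}$.

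The main step is therefore to verify~\eqref{eq:boundary-preimage} for $\tV:=\rhosusp(\Wab(U))$, where $U\in\sANbhdR(X,\cH)$. The proof of Proposition~\ref{prop:ANbhd-epi} gives $\incl^{-1}(\tV)=U$. For the interior, Lemma~\ref{lem:Wab}(i) gives
\[
  \incl(\Int_X U)\subseteq\Int_{\susp X}(\Wab(U)).
\]
The set $\Int_{\susp X}(\Wab(U))$ is open and contained in $\cl_{\susp X}\Int_{\susp X}(\Wab(U))=\tV$, so it lies in $\Int_{\susp X}(\tV)$. Hence $\Int_X(U)\subseteq\incl^{-1}(\Int_{\susp X}\tV)$, and therefore
\[
  \incl^{-1}\bigl(\tV\setminus\Int_{\susp X}\tV\bigr)\subseteq U\setminus\Int_X(U).
\]
The right-hand side is the boundary of the closed set $U$, which has empty interior. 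So~\eqref{eq:boundary-preimage} holds for $\tV$.

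Finally, I would check the bounds. We have $\Wab(\emptyset)=\emptyset$. Also $\Wab(X)=\susp X$, because $r^{-1}(X)=G$ and $[0,a]\cup[b,1]=[0,1]$ since $b<a$. Thus the bounds of $\tcD_{a,b}$ are $\emptyset$ and $\susp X$, and $\rho$ sends them to $\emptyset$ and $X$. I expect no serious obstacle. The only delicate point is the inclusion $\Int(\Wab(U))\subseteq\Int(\rhosusp(\Wab(U)))$, which holds because an open subset of a set $E$ is contained in $\cl\Int E$.
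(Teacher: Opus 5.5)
Your proof is correct, but it proves meet preservation by a different route from the paper.

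The paper does not use Lemma~\ref{lem:regularized-preimage-boundary} here. For generators $\tU_j=\rhosusp(\Wab(U_j))$, it uses $\incl(U_j)\subseteq\tU_j\subseteq\Wab(U_j)$ and $\incl(\Int_X U_j)\subseteq\Int_{\susp X}\tU_j$. These sandwich $\incl^{-1}\bigl(\bigwedge_j\tU_j\bigr)$ between $\Int_X\bigcap_j U_j$ and $\bigcap_j U_j$, which gives $\rho\bigl(\bigwedge_j\tU_j\bigr)=\bigwedge_j U_j$ directly. It then extends this to all of $\tcD_{a,b}$ by writing each element as a finite join of finite meets of generators, using distributivity together with Lemma~\ref{lem:regularized-preimage-joins}.

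You use the same two facts about the generators, but only to check that each generator satisfies~\eqref{eq:boundary-preimage}. Lemma~\ref{lem:regularized-preimage-boundary} then handles closure under $\vee$ and $\wedge$ and the homomorphism property. Your version avoids the normal-form argument and yields slightly more:
\begin{itemize}
\item every $\tE\in\tcD_{a,b}$ satisfies $\rho(\tE)=\cl_X\incl^{-1}(\Int_{\susp X}\tE)$;
\item $\rho$ is already an epimorphism on the sublattice of all $\tE\in\sANbhdR(\susp X,\Phi)$ satisfying~\eqref{eq:boundary-preimage}. This sublattice does not depend on $a,b$ and contains every $\tcD_{a,b}$ at once.
\end{itemize}
The paper's version is more self-contained and records an explicit formula for $\rho$ on meets of generators.

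One small citation slip: the fact that $\sANbhdR(\susp X,\Phi)$ is a bounded sublattice of $\scR(\susp X)$ is the semiflow result recalled in Section~\ref{sec:preliminaries}. It does not follow from Lemma~\ref{lem:regularized-preimage-omega} and Proposition~\ref{prop:ANbhd-epi}, which only place the generators in that lattice.
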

\begin{proof}
  First, note that $\rhosusp(\Wab(\emptyset))=\emptyset$ and $\rhosusp(\Wab(X))=\susp X$, so $\tcD_{a,b}$ contains both bounds. Lemma~\ref{lem:regularized-preimage-joins} and the section in Proposition~\ref{prop:ANbhd-epi} imply that $\rho|_{\tcD_{a,b}}$ is surjective and preserves joins and bounds, so it only remains to prove meets. Let $U_1,\ldots,U_m\in\sANbhdR(X,\cH)$ and $\tU_j:=\rhosusp(\Wab(U_j))$. Since $\Wab(U_j)$ is closed, Lemma~\ref{lem:Wab}(i) and regular closedness of $U_j$ yield
  \[
    \incl(U_j)\subseteq\tU_j\subseteq \Wab(U_j)
    \quad \text{and} \quad
    \incl(\Int_X U_j)\subseteq\Int_{\susp X}\tU_j.
  \]
  In particular, $\incl^{-1}(\tU_j)=U_j$, and therefore
  \[
    \Int_X\left(\bigcap_{j=1}^m U_j\right)
    \subseteq\incl^{-1}\left(\rhosusp\left(\bigcap_{j=1}^m\tU_j\right)\right)
    \subseteq\bigcap_{j=1}^m U_j.
  \]
  Taking interiors and then closures in $X$ yields
  \[
    \rho\left(\bigwedge_{j=1}^m\tU_j\right)=\bigwedge_{j=1}^m U_j.
  \]
  Note that every element of $\tcD_{a,b}$ is a finite join of finite meets of these generators, so if $\tV=\bigvee_p\tV_p$ and $\tW=\bigvee_q\tW_q$, where each $\tV_p$ and $\tW_q$ is a finite meet of generators, then distributivity, Lemma~\ref{lem:regularized-preimage-joins}, and the identity above imply that
  \[
    \rho(\tV\wedge\tW)
    =\bigvee_{p,q}\rho(\tV_p\wedge\tW_q)
    =\bigvee_{p,q}\left(\rho(\tV_p)\wedge\rho(\tW_q)\right)
    =\rho(\tV)\wedge\rho(\tW).
  \]
  Thus $\rho|_{\tcD_{a,b}}$ preserves meets and is a bounded lattice epimorphism.
\end{proof}

\begin{corollary}
  \label{thm:omega-square}
  The following diagram commutes.
  \[
    \begin{tikzcd}[column sep=4.5em, row sep=3.0em]
      \sANbhdR(\susp X,\Phi) \arrow[r,"{\omega(\cdot,\Phi)}"] \arrow[d,"\rho"']
      & \sAtt(\susp X,\Phi) \arrow[d,"\incl^{-1}"]
      \\
      \sANbhdR(X,\cH) \arrow[r,"{\omega(\cdot,\cH)}"']
      & \sAtt(\cH)
    \end{tikzcd}
  \]
\end{corollary}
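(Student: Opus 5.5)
To prove that the square commutes, I will fix $\tU\in\sANbhdR(\susp X,\Phi)$ and follow it along both paths, aiming to show
\[
  \omega(\rho(\tU),\cH)=\incl^{-1}\bigl(\omega(\tU,\Phi)\bigr).
\]
This is exactly the identity of Lemma~\ref{lem:regularized-preimage-omega}. That lemma was stated for every $\tU\in\sANbhd(\susp X,\Phi)$, and in particular it covers the regular closed ones. So the corollary reduces to checking that all four arrows are well defined and then citing that lemma.

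I will verify the well-definedness of each arrow in order. The top arrow $\omega(\cdot,\Phi)$ lands in $\sAtt(\susp X,\Phi)$ by the definition of attractors, since $\tU$ is an attracting neighborhood. The right arrow $\incl^{-1}$ maps $\sAtt(\susp X,\Phi)$ into $\sAtt(\cH)$ by Theorem~\ref{thm:main-correspondence}. The left arrow $\rho$ maps $\sANbhdR(\susp X,\Phi)$ into $\sANbhdR(X,\cH)$ by Lemma~\ref{lem:regularized-preimage-omega}, as already used in Proposition~\ref{prop:ANbhd-epi}. The bottom arrow is the epimorphism of Proposition~\ref{prop:Att-lattice}.

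With the arrows in place, I compose them. Going down then right gives $\omega(\rho(\tU),\cH)$. Going right then down gives $\incl^{-1}(\omega(\tU,\Phi))$. The two agree by Lemma~\ref{lem:regularized-preimage-omega}.

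The lemma's proof has two inclusions, and I would note why each holds. The inclusion "$\subseteq$" comes from Proposition~\ref{prop:omega-preimage-identity} together with $\incl(\rho(\tU))\subseteq\tU$, which holds because $\tU$ is closed. The inclusion "$\supseteq$" comes from the idempotence $\omega(A,\cH)=A$ for $A=\incl^{-1}(\omega(\tU,\Phi))$, combined with $A\subseteq\Int_X\incl^{-1}(\tU)\subseteq\rho(\tU)$.

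I expect no real obstacle here. The only point needing care is $A\subseteq\rho(\tU)$, that is, that the attractor lies in the interior of the preimage and so survives the regularization. This follows because $\tU$ is attracting, so $\omega(\tU,\Phi)\subseteq\Int_{\susp X}\tU$, and $\incl$ is continuous, so $\incl^{-1}(\Int_{\susp X}\tU)$ is an open subset of $\incl^{-1}(\tU)$. Hence the diagram commutes.
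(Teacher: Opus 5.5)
Your proposal is correct and matches the paper. The corollary has no separate proof there because it restates Lemma~\ref{lem:regularized-preimage-omega} on $\sANbhdR(\susp X,\Phi)$, with the arrows well defined by Theorem~\ref{thm:main-correspondence} and Proposition~\ref{prop:Att-lattice}, and your two inclusions reproduce that lemma's proof. The one small imprecision is calling $\omega(A,\cH)=A$ for $A=\incl^{-1}(\omega(\tU,\Phi))$ ``idempotence'': this actually comes from Lemma~\ref{lem:omega-preimage}, or from idempotence once Theorem~\ref{thm:main-correspondence} shows that $A$ is a hybrid attractor.
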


\section{Finite Realization and the Conley Index}
\label{sec:computability}

Fix a metric $d_{\susp X}$ inducing the quotient topology on $\susp X$ and a time $\tau>0$, and let $f_\tau:=\Phi(\tau,\cdot):\susp X\to\susp X$ be the $\tau$-map of $\Phi$. All diameters, distances, and neighborhoods in $\susp X$ refer to this metric. To apply the realization theorem of~\cite{Kalies:Mischaikow:Vandervorst:2016} to outer approximations of $f_\tau$, we construct a contracting cofiltration of suspension grids from grids on $X$ and dyadic subdivisions of $[0,1]$.

\subsection{Suspension grids}
\label{sec:grids}

Recall that every compact metric space admits a contracting cofiltration of grids~\cite{Kalies:Mischaikow:Vandervorst:2016}. We choose a contracting cofiltration $(\cI_n)_{n\geq0}$ of grids on $[0,1]$ as follows:
\[
  a_n:=2^{-n-2},\quad
  \Ink:=[ka_n,(k+1)a_n],\quad
  \cI_n:=\setdef{\Ink}{0\leq k<2^{n+2}}.
\]

For a grid $\cX$ on $X$ and $n\geq0$, the images $\incl(|\mu|)$ and $\pi\left((|\mu|\cap G)\times\Ink\right)$, with $\mu\in\cX$ and $\Ink\in\cI_n$, cover $\susp X$ but do not necessarily form a grid, as Example~\ref{ex:regularized-preimage-meet} shows. We therefore enlarge the base images near the identified endpoints and regularize the resulting generators before forming a grid.

\begin{definition}
  \label{def:suspension-grid}
  Let $\cX$ be a grid on $X$ and let $n\geq0$. For $\mu\in\cX$ and $1\leq k\leq 2^{n+2}-2$, define
  \begin{align*}
    \tE_n(\mu)&:=\incl(|\mu|)\cup\pi\left((|\mu|\cap G)\times[0,a_n]\right)\cup\pi\left(r^{-1}(|\mu|)\times[1-a_n,1]\right),\\
    \tK_n(\mu)&:=\rhosusp(\tE_n(\mu)),\\
    \tQ_n(\mu,k)&:=\rhosusp\left(\pi\left((|\mu|\cap G)\times\Ink\right)\right).
  \end{align*}
  The \emph{suspension grid} associated with $\cX$ and $\cI_n$, denoted $\Xi_n(\cX)$, is the grid on $\susp X$ generated by the sets $\tK_n(\mu)$ and $\tQ_n(\mu,k)$ with $\mu\in\cX$ and $1\leq k\leq 2^{n+2}-2$.
\end{definition}
Figure~\ref{fig:suspension-grid} illustrates this construction for $n=0$.

\begin{figure}
  \centering
  \begingroup
  \definecolor{gridA}{HTML}{3178A5}
  \definecolor{gridB}{HTML}{C76B35}
  \definecolor{gridC}{HTML}{21866D}
  \definecolor{gridD}{HTML}{B9638D}
  \definecolor{gridE}{HTML}{D0A02C}
  \definecolor{gridF}{HTML}{7865AA}
  \tikzset{grid midarrow/.style={postaction={decorate},decoration={markings,mark=at position 0.52 with {\arrow{>}}}}}
  \newcommand{\GridBase}[3]{
    \fill[#1] (0.15,0.1) -- (1.95,0.1) -- (2.55,1.3) -- (0.75,1.3) -- cycle;
    \fill[#2] (1.95,0.1) -- (3.75,0.1) -- (4.35,1.3) -- (2.55,1.3) -- cycle;
    \fill[#3] (3.75,0.1) -- (5.55,0.1) -- (6.15,1.3) -- (4.35,1.3) -- cycle;
    \draw[gray!65,line width=0.65pt] (0.15,0.1) -- (5.55,0.1) -- (6.15,1.3) -- (0.75,1.3) -- cycle;
    \draw[gray!65,line width=0.65pt] (1.95,0.1) -- (2.55,1.3) (3.75,0.1) -- (4.35,1.3);
  }
  \newcommand{\GridArc}[3]{
    \draw[#1] plot[domain=#2:#3,samples=81,variable=\t] ({2.25+2.7*(3*\t*\t-2*\t*\t*\t)},{0.7+10*\t*(1-\t)});
  }
  \newcommand{\GridEndpoints}[1][fill=black,draw=black]{
    \filldraw[#1] (2.25,0.7) circle (1.8pt);
    \filldraw[#1] (4.95,0.7) circle (1.8pt);
    \node[anchor=east,font=\scriptsize] at (2.05,0.58) {$\incl(g)$};
    \node[below,font=\scriptsize] at (4.95,0.55) {$\incl(r(g))$};
  }
  \newcommand{\GridCuts}{
    \foreach \t in {0.25,0.5,0.75}{\fill ({2.25+2.7*(3*\t*\t-2*\t*\t*\t)},{0.7+10*\t*(1-\t)}) circle (1.35pt);}
  }

  \begin{tikzpicture}[x=1cm,y=1cm,font=\small]
    \node[anchor=east] at (-0.25,0.14) {$\cI_0$};
    \foreach \j/\col in {0/gridD,1/gridE,2/gridF,3/gridC}{
      \fill[\col!17] (1.7*\j,0) rectangle (1.7*\j+1.7,0.28);
      \draw[\col,line width=1.4pt] (1.7*\j,0.14) -- (1.7*\j+1.7,0.14);
      \node[above,font=\scriptsize] at (1.7*\j+0.85,0.30) {$I_{0,\j}$};
    }
    \foreach \x/\label in {0/0,1.7/{\frac14},3.4/{\frac12},5.1/{\frac34},6.8/1}{
      \draw (\x,-0.04) -- (\x,0.32);
      \node[below,font=\scriptsize] at (\x,-0.08) {$\label$};
    }
  \end{tikzpicture}

  \medskip
  \begin{subfigure}[t]{0.48\textwidth}
    \centering
    \begin{tikzpicture}[x=1cm,y=1cm,font=\small,line cap=round]
      \path[use as bounding box] (-0.15,-0.55) rectangle (6.45,3.9);
      \GridBase{gridA!17}{gridB!17}{gridC!17}
      \draw[gridB,thick,dashed,grid midarrow] (2.25,0.7) .. controls (2.4,3.2) and (4.7,3.2) .. (4.95,0.7);
      \node[gridB,above] at (3.6,2.65) {$r$};
      \fill (2.25,0.7) circle (1.8pt);
      \fill (4.95,0.7) circle (1.8pt);
      \node[below] at (2.25,0.53) {$g$};
      \node[below] at (4.95,0.53) {$r(g)$};
      \node[gridA!80!black] at (1.05,-0.22) {$|\mu_1|$};
      \node[gridB!80!black] at (2.85,-0.22) {$|\mu_2|$};
      \node[gridC!70!black] at (4.65,-0.22) {$|\mu_3|$};
      \node[anchor=east] at (6.15,1.65) {$X$};
    \end{tikzpicture}
    \caption{Grid $\cX$}
  \end{subfigure}\hfill
  \begin{subfigure}[t]{0.48\textwidth}
    \centering
    \begin{tikzpicture}[x=1cm,y=1cm,font=\small,line cap=round]
      \path[use as bounding box] (-0.15,-0.55) rectangle (6.45,3.9);
      \GridBase{gridA!17}{gridB!17}{gridC!17}
      \GridArc{gray!55,dashed,line width=0.85pt}{0}{1}
      \GridArc{gridA,line width=3pt}{0}{0.25}
      \GridArc{gridB,line width=3pt,line cap=butt,dash pattern=on 3pt off 3pt}{0}{0.25}
      \GridArc{gridC,line width=2.5pt}{0.75}{1}
      \GridEndpoints
      \fill (2.671875,2.575) circle (1.4pt);
      \fill (4.528125,2.575) circle (1.4pt);
      \node[anchor=east,font=\scriptsize] at (2.51,2.70) {$s=a_0$};
      \node[anchor=west,font=\scriptsize] at (4.69,2.70) {$s=1-a_0$};
      \node[above] at (3.6,3.34) {$\pi(\{g\}\times[0,1])$};
      \node[gridA!80!black] at (1.05,-0.22) {$\tK_0(\mu_1)$};
      \node[gridB!80!black] at (2.85,-0.22) {$\tK_0(\mu_2)$};
      \node[gridC!70!black] at (4.65,-0.22) {$\tK_0(\mu_3)$};
    \end{tikzpicture}
    \caption{Generators $\tE_0(\mu_i)=\tK_0(\mu_i)$}
  \end{subfigure}

  \medskip
  \begin{subfigure}[t]{0.48\textwidth}
    \centering
    \begin{tikzpicture}[x=1cm,y=1cm,font=\small,line cap=round]
      \path[use as bounding box] (-0.15,-0.55) rectangle (6.45,3.9);
      \GridBase{gray!4}{gray!4}{gray!4}
      \GridArc{gray!55,dashed,line width=0.85pt}{0}{1}
      \GridArc{gridE,line width=2.6pt}{0.25}{0.5}
      \GridArc{gridF,line width=2.6pt}{0.5}{0.75}
      \GridCuts
      \GridEndpoints[fill=white,draw=gray!65]
      \node[gridE!75!black,anchor=east] at (3.19,3.58) {$\pi(\{g\}\times I_{0,1})$};
      \node[gridF,anchor=west] at (4.02,3.58) {$\pi(\{g\}\times I_{0,2})$};
      \node[anchor=east,font=\scriptsize] at (2.52,2.61) {$s=\frac14$};
      \node[above,font=\scriptsize] at (3.6,3.26) {$s=\frac12$};
      \node[anchor=west,font=\scriptsize] at (4.70,2.61) {$s=\frac34$};
      \node[anchor=east] at (6.15,1.65) {$\incl(X)$};
    \end{tikzpicture}
    \caption{Generators $\tQ_0(\mu_i,k)$}
  \end{subfigure}\hfill
  \begin{subfigure}[t]{0.48\textwidth}
    \centering
    \begin{tikzpicture}[x=1cm,y=1cm,font=\small,line cap=round]
      \path[use as bounding box] (-0.15,-0.55) rectangle (6.45,3.9);
      \GridBase{gridA!17}{gridB!17}{gridC!17}
      \GridArc{gridD,line width=2.6pt}{0}{0.25}
      \GridArc{gridE,line width=2.6pt}{0.25}{0.5}
      \GridArc{gridF,line width=2.6pt}{0.5}{0.75}
      \GridArc{gridC,line width=2.6pt}{0.75}{1}
      \GridCuts
      \GridEndpoints
      \node[gridD!80!black,anchor=east] at (2.10,1.94) {$\pi(\{g\}\times I_{0,0})$};
      \node[gridE!75!black,anchor=east] at (3.19,3.58) {$\pi(\{g\}\times I_{0,1})$};
      \node[gridF,anchor=west] at (4.02,3.58) {$\pi(\{g\}\times I_{0,2})$};
      \node[gridA!80!black] at (1.05,-0.22) {$\incl(|\mu_1|)$};
      \node[gridB!80!black] at (2.85,-0.22) {$\incl(|\mu_2|)$};
      \node[gridC!70!black] at (4.65,-0.22) {$\tK_0(\mu_3)$};
    \end{tikzpicture}
    \caption{Suspension grid $\Xi_0(\cX)$}
  \end{subfigure}
  \endgroup
  \caption{The suspension grid $\Xi_0(\cX)$ associated with $\cX=\{\mu_1,\mu_2,\mu_3\}$ and $\cI_0$.}
  \label{fig:suspension-grid}
\end{figure}

Now let $(\cX_n)$ be a contracting cofiltration of grids on $X$, and define the associated suspension grids recursively by
\[
  \Xi_0:=\Xi_0(\cX_0),
  \quad
  \Xi_n:=\Xi_{n-1}\wedge\Xi_n(\cX_n)\quad\forall n\geq1.
\]
Equivalently, $\Xi_n$ is generated by the sets $\tK_j(\mu)$ and $\tQ_j(\mu,k)$ with $0\leq j\leq n$, $\mu\in\cX_j$, and $1\leq k\leq 2^{j+2}-2$. Note that, by construction, $\Xi_{n+1}$ refines $\Xi_n$. However, refining the base and interval grids alone does not guarantee a refinement, since $r^{-1}(|\mu|)$ for $\mu\in\cX_n$ is not necessarily a union of the sets $|\mu'|\cap G$ with $\mu'\in\cX_{n+1}$.

We write $|\xi|_{\susp}$ for the element indexed by $\xi\in\Xi_n$ and $|\cU|_{\susp}:=\bigcup_{\xi\in\cU}|\xi|_{\susp}$ for $\cU\subseteq\Xi_n$, and let $\tcB_n:=\scR_{\Xi_n}(\susp X)$ be the Boolean algebra of unions of elements of $\Xi_n$.

For $1\leq k\leq2^{n+2}-2$, we have $\Ink\subseteq(0,1)$ and hence $\tQ_n(\mu,k)\cap\incl(X)=\emptyset$. Since $\pi$ restricts to a homeomorphism from $G\times(0,1)$ onto an open subset of $\susp X$,
\[
  \tQ_n(\mu,k)=\pi\left(\cl_G\Int_G(|\mu|\cap G)\times\Ink\right),
\]
which is empty when $|\mu|\cap G$ has empty interior in $G$. Note that the sets $|\mu|\cap G$ do not necessarily have disjoint interiors in $G$.

\begin{remark}
  \label{rem:guard-grids}
  The sets $|\mu|\cap G$ in the product generators may be replaced by the elements of any contracting cofiltration $(\cG_n)$ of grids on $G$, and the results below remain valid. If $\cG_n$ consists of the nonzero atoms of the Boolean algebra in $\scR(G)$ generated by $S_{j,\mu}:=\cl_G\Int_G(|\mu|\cap G)$ for $\mu\in\cX_j$ and $j\leq n$, this construction generates the same algebras $\tcB_n$ and hence the same suspension grids. We use $\cX_n$ as the indexing family for computational convenience.

  Indeed, every $S_{i,\mu}$ with $i\leq n$ is a union of the sets $S_{n,\mu'}$, which cover $G$. The corresponding Boolean algebras therefore increase, and their atoms form a contracting cofiltration since each atom lies in some $S_{n,\mu'}$ and has diameter at most $\diam(\cX_n)$. For each closed interval $J\subseteq(0,1)$ of positive length, the map $S\mapsto\pi(S\times J)$ preserves Boolean operations relative to $\pi(G\times J)$. Hence the images of the generators and the images of the corresponding atoms generate the same Boolean algebra, so the algebras $\tcB_n$ agree.
\end{remark}

\begin{proposition}
  \label{prop:suspension-grid}
  The sequence $(\Xi_n)_{n\geq0}$ is a contracting cofiltration of grids on $\susp X$. For each $\mu\in\cX_n$, there is a unique $\tmu\in\Xi_n$ satisfying
  \[
    \incl(\Int_X|\mu|)\subseteq\Int_{\susp X}|\tmu|_{\susp},
    \quad
    \incl^{-1}(|\tmu|_{\susp})=|\mu|.
  \]
  In particular, $d_n:\cX_n\injection\Xi_n$, $d_n(\mu)=\tmu$, is injective.
\end{proposition}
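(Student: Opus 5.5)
We would prove the three claims in order: that $(\Xi_n)$ is a cofiltration, that its diameters tend to zero, and then the existence and uniqueness of $\tmu$ together with the injectivity of $d_n$.

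\emph{Cofiltration.} Each $\Xi_n$ is a grid because it is the family of nonzero atoms of the Boolean algebra generated by finitely many regular closed sets~\cite{Kalies:Mischaikow:Vandervorst:2016}. Since $\Xi_n=\Xi_{n-1}\wedge\Xi_n(\cX_n)$ is a common refinement, $\Xi_n\leq\Xi_{n-1}$.

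\emph{Contraction.} We first show that the level-$n$ generators $\tK_n(\mu)$, $\mu\in\cX_n$, and $\tQ_n(\mu,k)$, $1\leq k\leq2^{n+2}-2$, cover $\susp X$.
\begin{itemize}
\item The sets $\tE_n(\mu)$ cover $\incl(X)$ together with the collars $\pi(G\times[0,a_n])$ and $\pi(G\times[1-a_n,1])$.
\item The middle part $\pi(G\times[a_n,1-a_n])$ is covered by the sets $\pi(\cl_G\Int_G(|\mu|\cap G)\times\Ink)$. Indeed, $G=\bigcup_\mu(|\mu|\cap G)$ is a finite union of closed sets, so the union of their $G$-interiors is dense in $G$, and hence $\bigcup_\mu\cl_G\Int_G(|\mu|\cap G)=G$.
\item Passing from $\tE_n(\mu)$ to $\tK_n(\mu)$ only removes nowhere-dense pieces. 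So the union of the level-$n$ generators is closed with dense interior, hence equals $\susp X$.
\end{itemize}
Consequently every atom $|\xi|_{\susp}$ of $\Xi_n$, being nonempty and regular closed with sign pattern $+$ on at least one level-$n$ generator, is contained in some $\tK_n(\mu)$ or $\tQ_n(\mu,k)$.

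It remains to bound the diameters of these generators. We use uniform continuity of $\pi$ on the compact space $X'$, together with a modulus $\eta$ with $\eta(\delta)\to0$ as $\delta\to0$.
\begin{itemize}
\item $\diam\tQ_n(\mu,k)\leq\diam\pi((|\mu|\cap G)\times\Ink)\leq\eta(\diam(\cX_n)+a_n)$.
\item For $\tK_n(\mu)\subseteq\tE_n(\mu)$: each point $\pi(g,s)$ with $s\leq a_n$ and $g\in|\mu|$ lies within $\eta(a_n)$ of $\incl(g)\in\incl(|\mu|)$. Each point $\pi(g,s)$ with $s\geq1-a_n$ and $r(g)\in|\mu|$ lies within $\eta(a_n)$ of $\pi(g,1)=\incl(r(g))\in\incl(|\mu|)$. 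Hence
\[
\diam\tK_n(\mu)\leq\eta(\diam\cX_n)+2\eta(a_n).
\]
\end{itemize}
Both bounds tend to $0$, so $\diam(\Xi_n)\to0$.

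\emph{Existence of $\tmu$.} Fix $\mu\in\cX_n$ and, for each $j\leq n$, let $\mu_j\in\cX_j$ be its unique ancestor. The argument of Lemma~\ref{lem:Wab}(i), applied with $a_j$ in place of $a$ and $1-a_j$ in place of $b$, gives $\incl(\Int_X|\mu_j|)\subseteq\Int_{\susp X}\tE_j(\mu_j)$. Since $\tE_j(\mu_j)$ is closed, this interior lies in $\tK_j(\mu_j)$, so $\incl(\Int|\mu|)\subseteq\Int\tK_j(\mu_j)$.

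Next, $\incl^{-1}(\tE_j(\mu'))=|\mu'|$, because $\pi(g,1)=\incl(r(g))$ and $r(g)\in|\mu'|$. For $\mu'\neq\mu_j$ the grid property gives $|\mu'|\cap\Int|\mu_j|=\emptyset$. Hence $\incl(\Int|\mu|)$ lies in the open set $\susp X\setminus\tK_j(\mu')$, and therefore in $\Int\tK_j(\mu')^*$. The sets $\tQ_j(\mu',k)$ miss $\incl(X)$ entirely, so $\incl(\Int|\mu|)$ likewise lies in $\Int\tQ_j(\mu',k)^*$.

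Thus $\incl(\Int|\mu|)$ is contained in the intersection of the interiors of the factors of the sign pattern $\sigma$ that is $+$ exactly on the generators $\tK_j(\mu_j)$, $j\leq n$. This intersection is contained in the interior of the meet $|\sigma|$. Since $\Int|\mu|\neq\emptyset$, the atom $|\sigma|$ is nonempty, and we set $\tmu:=\sigma$.

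\emph{Uniqueness and injectivity.} Uniqueness holds because distinct grid elements have disjoint interiors and $\incl(\Int|\mu|)\neq\emptyset$. For the preimage identity, first note $\incl^{-1}(|\tmu|_{\susp})\supseteq\cl_X\Int|\mu|=|\mu|$, since the preimage is closed. Conversely,
\[
\incl^{-1}(|\tmu|_{\susp})\subseteq\incl^{-1}(\tK_n(\mu))\subseteq\incl^{-1}(\tE_n(\mu))=|\mu|.
\]
Hence $\incl^{-1}(|\tmu|_{\susp})=|\mu|$, and injectivity of $d_n$ follows because distinct $\mu$ have distinct $|\mu|$.

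We expect the main obstacle to be the quotient-topology interior claims near the identified endpoints. These are the inclusion $\incl(\Int|\mu_j|)\subseteq\Int\tE_j(\mu_j)$ at points of $G\cup r(G)$ and the covering/density argument for the regularized generators; both should follow by repeating the proof of Lemma~\ref{lem:Wab}(i).
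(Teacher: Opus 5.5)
Your proposal is correct and follows essentially the same route as the paper: refinement by construction, $\diam(\Xi_n)\to0$ via uniform continuity together with the fact that every atom lies in a level-$n$ generator, and the sign-pattern argument placing $\incl(\Int_X|\mu|)$ in the interior of a unique atom contained in $\tK_n(\mu)$, followed by the same preimage and injectivity checks. You also make explicit two points the paper leaves implicit: first, that the level-$n$ generators cover $\susp X$, so every nonempty atom has sign $+$ on one of them; second, that the interior inclusion of Lemma~\ref{lem:Wab}(i) uses only $a>0$ and $b<1$, so it applies to $\tE_j$ even though there $1-a_j>a_j$.
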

\begin{proof}
For $j\geq0$ and $\nu\in\cX_j$, the definition of $\tE_j(\nu)$ and Lemma~\ref{lem:Wab}(i) give
\[
  \incl^{-1}(\tK_j(\nu))=|\nu|,
  \qquad
  \incl(\Int_X|\nu|)\subseteq\Int_{\susp X}\tK_j(\nu),
  \qquad
  \tQ_j(\nu,k)\cap\incl(X)=\emptyset.
\]
By construction, $\Xi_{n+1}$ refines $\Xi_n$. Moreover, uniform continuity of $\incl$ and $\pi$, together with $\diam(\cX_n)\to0$ and $a_n\to0$, implies $\max_{\mu}\diam\tK_n(\mu)\to0,$ and $\max_{\mu,k}\diam\tQ_n(\mu,k)\to0.$ Since every element of $\Xi_n$ is contained in a level-$n$ generator, $\diam(\Xi_n)\to0$. Thus $(\Xi_n)$ is a contracting cofiltration.

Now, fix $\mu\in\cX_n$, and for $j\leq n$ let $\mu_j\in\cX_j$ be the unique element containing $\mu$. Then $\incl(\Int_X|\mu|)\subseteq\Int_{\susp X}\tK_j(\mu_j)$ and is disjoint from every $\tK_j(\nu)$ with $\nu\neq\mu_j$ and every $\tQ_j(\nu,k)$. Thus, for each generator $E$ of $\Xi_n$, $\incl(\Int_X|\mu|)$ is contained in either $\Int_{\susp X}E$ or $\Int_{\susp X}E^*$, and hence in the interior of a unique atom $\tmu\in\Xi_n$. Therefore
\[
  \incl(\Int_X|\mu|)\subseteq\Int_{\susp X}|\tmu|_{\susp}.
\]
Since $|\tmu|_{\susp}\subseteq\tK_n(\mu)$, we have $\incl^{-1}(|\tmu|_{\susp})\subseteq|\mu|$. Conversely, since $|\tmu|_{\susp}$ is closed and contains $\incl(\Int_X|\mu|)$, continuity of $\incl$ and $|\mu|=\cl_X\Int_X|\mu|$ give $\incl(|\mu|)\subseteq\cl_{\susp X}\incl(\Int_X|\mu|)\subseteq|\tmu|_{\susp}$. Hence $\incl^{-1}(|\tmu|_{\susp})=|\mu|$.

Finally, $d_n(\mu)=d_n(\nu)$ implies $|\mu|=\incl^{-1}(|d_n(\mu)|_{\susp})=\incl^{-1}(|d_n(\nu)|_{\susp})=|\nu|$, hence $\mu=\nu$. Thus $d_n$ is injective.
\end{proof}

\begin{proposition}
  \label{prop:finite-grid-preimage}
  The restriction
  \[
    \rho_n:=\rho|_{\tcB_n}:\tcB_n\surjection\scR_{\cX_n}(X)
  \]
  is a bounded lattice epimorphism that preserves complements and satisfies
  \begin{equation}
    \rho_n(|\cU|_{\susp})=|d_n^{-1}(\cU)|
    \quad\forall\,\cU\subseteq\Xi_n.
    \label{eq:finite-grid-preimage}
  \end{equation}
  For $m\geq n$, let $p_{mn}:\cX_m\to\cX_n$ and $q_{mn}:\Xi_m\to\Xi_n$ assign to each grid element the element containing it in $\cX_n$ and $\Xi_n$, respectively. Then
  \begin{equation}
    q_{mn}\circ d_m=d_n\circ p_{mn},
    \quad
    d_m^{-1}\circ q_{mn}^{-1}=p_{mn}^{-1}\circ d_n^{-1},
    \quad
    \rho_m|_{\tcB_n}=\rho_n.
    \label{eq:finite-grid-refinement}
  \end{equation}
\end{proposition}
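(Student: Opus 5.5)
The plan is to prove the formula \eqref{eq:finite-grid-preimage} first, at the level of single grid elements, and then read off everything else. The first claim is that for every $\xi\in\Xi_n$,
\[
  \rho(|\xi|_{\susp})=
  \begin{cases}
    |\mu|, & \xi=d_n(\mu),\\
    \emptyset, & \xi\notin d_n(\cX_n).
  \end{cases}
\]
\emph{Case $\xi=d_n(\mu)$.} Proposition~\ref{prop:suspension-grid} gives $\incl^{-1}(|\xi|_{\susp})=|\mu|$, so $\rho(|\xi|_{\susp})=\cl_X\Int_X|\mu|=|\mu|$.

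\emph{Case $\xi\notin d_n(\cX_n)$.} Every $x\in\Int_X|\mu|$ has $\incl(x)$ in the interior of $|d_n(\mu)|_{\susp}$. Distinct grid elements satisfy $|\xi|\cap\Int|\xi'|=\emptyset$, so $\incl^{-1}(|\xi|_{\susp})$ is disjoint from $\bigcup_\mu\Int_X|\mu|$. That union is open and dense, so $\incl^{-1}(|\xi|_{\susp})$ is closed with empty interior, and $\rho$ vanishes on it.

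Now take a union $|\cU|_{\susp}$ with $\cU\subseteq\Xi_n$. Lemma~\ref{lem:regularized-preimage-joins} lets us pass $\rho$ through finite unions. The claim then gives $\rho(|\cU|_{\susp})=\bigcup_{\mu\in d_n^{-1}(\cU)}|\mu|=|d_n^{-1}(\cU)|$, which is \eqref{eq:finite-grid-preimage}.

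\textbf{Lattice epimorphism.} Using the evaluation isomorphisms $\cP(\Xi_n)\cong\tcB_n$ and $\cP(\cX_n)\cong\scR_{\cX_n}(X)$, the map $\rho_n$ corresponds to $\cU\mapsto d_n^{-1}(\cU)$. This preimage map is a Boolean algebra homomorphism $\cP(\Xi_n)\to\cP(\cX_n)$: it preserves unions, intersections, complements, $\emptyset$ and the whole set. Transporting through the isomorphisms, $\rho_n$ is a bounded lattice homomorphism that preserves complements. Surjectivity holds because $d_n$ is injective: for $\cV\subseteq\cX_n$ we have $d_n^{-1}(d_n(\cV))=\cV$.

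\textbf{Refinement identities.} The step I expect to be the main point is $q_{mn}\circ d_m=d_n\circ p_{mn}$, which I would check directly. Fix $\mu\in\cX_m$ and put $\nu=p_{mn}(\mu)$, so $|\mu|\subseteq|\nu|$. Then
\[
  \incl(\Int_X|\mu|)\subseteq\Int|d_m(\mu)|_{\susp}\subseteq|q_{mn}d_m(\mu)|_{\susp}.
\]
Also $\incl(\Int_X|\mu|)\subseteq\incl(\Int_X|\nu|)\subseteq\Int|d_n(\nu)|_{\susp}$. So the two $\Xi_n$-elements $q_{mn}d_m(\mu)$ and $d_n(\nu)$ both contain the nonempty set $\incl(\Int_X|\mu|)$, and this set lies in the interior of $d_n(\nu)$. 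Distinct grid elements meet only away from each other's interiors, so the two elements coincide.

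The second identity follows by taking preimages of the first: $d_m^{-1}\circ q_{mn}^{-1}=(q_{mn}d_m)^{-1}=(d_np_{mn})^{-1}=p_{mn}^{-1}\circ d_n^{-1}$.

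For the third, write an element of $\tcB_n$ as $|\cU|_{\susp}$ with $\cU\subseteq\Xi_n$. In $\Xi_m$ the same set is $|q_{mn}^{-1}(\cU)|_{\susp}$. Applying \eqref{eq:finite-grid-preimage} at level $m$ and then the second identity gives $|d_m^{-1}q_{mn}^{-1}(\cU)|=|p_{mn}^{-1}(d_n^{-1}(\cU))|=|d_n^{-1}(\cU)|=\rho_n(|\cU|_{\susp})$. Hence $\rho_m|_{\tcB_n}=\rho_n$.
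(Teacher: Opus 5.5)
Your proposal is correct and follows the paper's proof essentially step for step: the same element-wise formula for $\rho(|\xi|_{\susp})$, the same passage to unions via join preservation, and the same transport of $d_n^{-1}$ through the evaluation isomorphisms, with surjectivity from injectivity of $d_n$. The differences are minor. You obtain $q_{mn}\circ d_m=d_n\circ p_{mn}$ directly from the interior property of $d_n$ and the grid property, whereas the paper uses monotonicity of $\rho$ and the element-wise formula. Your derivation of $\rho_m|_{\tcB_n}=\rho_n$ is valid but unnecessary, since both maps are literally $\rho$ restricted to $\tcB_n\subseteq\tcB_m$.
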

\begin{proof}
We first claim that, for $\tau\in\Xi_n$,
\[
  \rho(|\tau|_{\susp})=
  \begin{cases}
    |\mu|, & \tau=d_n(\mu),\\
    \emptyset, & \tau\notin d_n(\cX_n).
  \end{cases}
\]
Indeed, if $\tau=d_n(\mu)$, Proposition~\ref{prop:suspension-grid} gives $\incl^{-1}(|\tau|_{\susp})=|\mu|$, and hence $\rho(|\tau|_{\susp})=|\mu|$. Otherwise, if $\Int_X\incl^{-1}(|\tau|_{\susp})\neq\emptyset$, then, since $\bigcup_{\mu\in\cX_n}\Int_X|\mu|$ is dense in $X$, there exist $\mu\in\cX_n$ and $x\in\Int_X|\mu|$ with $\incl(x)\in|\tau|_{\susp}$. But $\incl(x)\in\Int_{\susp X}|d_n(\mu)|_{\susp}$, so the grid property forces $\tau=d_n(\mu)$, a contradiction. Thus, using preservation of finite joins by $\rho$,
\[
  \rho_n(|\cU|_{\susp})
  =\bigvee_{\tau\in\cU}\rho(|\tau|_{\susp})
  =\bigcup_{\mu\in d_n^{-1}(\cU)}|\mu|
  =|d_n^{-1}(\cU)|.
\]
Since the evaluation maps identify $\cP(\Xi_n)$ and $\cP(\cX_n)$ with $\tcB_n$ and $\scR_{\cX_n}(X)$, respectively, and $d_n^{-1}$ preserves unions, intersections, complements, and bounds, $\rho_n$ is a bounded lattice homomorphism preserving complements. It is surjective because $d_n$ is injective.

Now let $m\geq n$ and $\mu\in\cX_m$. Since $|d_m(\mu)|_{\susp}\subseteq|q_{mn}(d_m(\mu))|_{\susp}$, monotonicity of $\rho$ and the preceding formula give
\[
  |\mu|\subseteq\rho\bigl(|q_{mn}(d_m(\mu))|_{\susp}\bigr),
\]
so $q_{mn}(d_m(\mu))=d_n(\nu)$ for the unique $\nu\in\cX_n$ containing $\mu$, namely $\nu=p_{mn}(\mu)$. Hence $q_{mn}\circ d_m=d_n\circ p_{mn}$. Taking inverse images yields $d_m^{-1}\circ q_{mn}^{-1}=p_{mn}^{-1}\circ d_n^{-1}$. Finally, $\Xi_m$ refines $\Xi_n$, so $\tcB_n\subseteq\tcB_m$. Since both $\rho_m=\rho|_{\tcB_m}$ and $\rho_n=\rho|_{\tcB_n}$, therefore $\rho_m|_{\tcB_n}=\rho_n$.
\end{proof}
Thus the maps $d_n$ are compatible with refinement or, equivalently, for every $m\geq n$, the diagram
\[
\begin{tikzcd}
\cX_m \arrow[r, "d_m", hook] \arrow[d, "p_{mn}"']
  & \Xi_m \arrow[d, "q_{mn}"] \\
\cX_n \arrow[r, "d_n"', hook]
  & \Xi_n
\end{tikzcd}
\]
commutes.

\subsection{Outer approximations}

\begin{definition}
  \label{def:suspension-multivalued-map}
  Choose $\varepsilon_n>0$ with $\varepsilon_{n+1}\leq\varepsilon_n$ and $\varepsilon_n\to0$. For each $n$, define $\cF_n:\Xi_n\mvmap\Xi_n$ by
  \[
    \cF_n(\xi):=\setdef{\xi'\in\Xi_n}{|\xi'|_{\susp}\cap B_{\varepsilon_n}(f_\tau(|\xi|_{\susp}))\neq\emptyset},
    \quad\forall\,\xi\in\Xi_n,
  \]
  and extend it to subsets by $\cF_n(\cU):=\bigcup_{\xi\in\cU}\cF_n(\xi)$ for $\cU\subseteq\Xi_n$.
\end{definition}

\begin{proposition}
  \label{prop:suspension-multivalued-map}
  The maps $\cF_n$ satisfy
  \[
    f_\tau(|\xi|_{\susp})\subseteq\Int_{\susp X}|\cF_n(\xi)|_{\susp},
    \quad\forall\,n\geq0,\quad\forall\,\xi\in\Xi_n,
  \]
  and form a convergent cofiltration of $\varepsilon_n$-minimal outer approximations of $f_\tau$.
\end{proposition}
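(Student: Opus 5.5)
We need to verify three things: $\cF_n$ is an outer approximation, the sequence is a cofiltration in the sense of \eqref{eq:outer-map-refinement}, and it is convergent with $r_n=\varepsilon_n$. We take them in that order, using only compactness of $|\xi|_{\susp}$, the grid property of $\Xi_n$, and the refinement $\Xi_{n+1}\leq\Xi_n$ from Proposition~\ref{prop:suspension-grid}.

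\emph{Outer approximation.} Fix $\xi\in\Xi_n$ and set $C:=f_\tau(|\xi|_{\susp})$. This set is compact, since $f_\tau$ is continuous by Theorem~\ref{thm:hybrid-suspension-semiflow}. The open set $B_{\varepsilon_n}(C)$ is covered by the grid elements that meet it, which are exactly the members of $\cF_n(\xi)$. Hence
\[
  B_{\varepsilon_n}(C)\subseteq|\cF_n(\xi)|_{\susp},
\]
and since $B_{\varepsilon_n}(C)$ is open, $C\subseteq B_{\varepsilon_n}(C)\subseteq\Int_{\susp X}|\cF_n(\xi)|_{\susp}$.

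\emph{Convergence.} By definition, $\cF_n$ is the $\varepsilon_n$-minimal map $\cF_n^{\varepsilon_n}$. Trivially $\cF_n^{\min}(\xi)\subseteq\cF_n(\xi)$, because $C\subseteq B_{\varepsilon_n}(C)$. Since $\diam(\Xi_n)\to0$ by Proposition~\ref{prop:suspension-grid} and $\varepsilon_n\to0$ by choice, the convergence condition holds with $r_n:=\varepsilon_n$.

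\emph{Cofiltration.} This is the only step with real content. Let $\cU'\subseteq\Xi_{n+1}$ and $\cU\subseteq\Xi_n$ satisfy $|\cU'|_{\susp}=|\cU|_{\susp}$. We must show $|\cF_{n+1}(\cU')|_{\susp}\subseteq|\cF_n(\cU)|_{\susp}$.
\begin{enumerate}[(1)]
  \item Take $\xi'\in\cF_{n+1}(\cU')$. Then there are $\eta'\in\cU'$ and a point $y\in|\xi'|_{\susp}$ with $d_{\susp X}\bigl(y,f_\tau(|\eta'|_{\susp})\bigr)<\varepsilon_{n+1}\leq\varepsilon_n$.
  \item Since $\eta'$ is a grid element of $\Xi_{n+1}$, we have $|\eta'|_{\susp}=\cl\Int|\eta'|_{\susp}$, and $\Int|\eta'|_{\susp}$ is disjoint from the boundaries of the $\Xi_n$-elements. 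Because $|\eta'|_{\susp}\subseteq|\cU|_{\susp}$, it follows that $|\eta'|_{\susp}\subseteq|\eta|_{\susp}$ for some $\eta\in\cU$; this is the refinement property.
  \item Let $\zeta:=q_{n+1,n}(\xi')$, so $|\xi'|_{\susp}\subseteq|\zeta|_{\susp}$. Then $y\in|\zeta|_{\susp}$ and $d_{\susp X}\bigl(y,f_\tau(|\eta|_{\susp})\bigr)<\varepsilon_n$, so $\zeta\in\cF_n(\eta)\subseteq\cF_n(\cU)$.
  \item Hence $|\xi'|_{\susp}\subseteq|\zeta|_{\susp}\subseteq|\cF_n(\cU)|_{\susp}$, and taking the union over $\xi'$ gives the inclusion.
\end{enumerate}

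The main point to handle carefully is step (2): an element of $\Xi_{n+1}$ lying in $|\cU|_{\susp}$ must lie in a single element of $\cU$. This follows from the definition of refinement (uniqueness of the containing element) and regular closedness. The monotonicity $\varepsilon_{n+1}\leq\varepsilon_n$ is exactly what makes the radius comparison in step (1) go through.
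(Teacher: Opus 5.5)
Your proof is correct and follows essentially the same route as the paper: grid elements that miss $B_{\varepsilon_n}(f_\tau(|\xi|_{\susp}))$ cannot contain $f_\tau(x)$, which gives the outer-approximation property; the cofiltration property comes from $|\xi'|_{\susp}\subseteq|q_{n+1,n}(\xi')|_{\susp}$ together with $\varepsilon_{n+1}\leq\varepsilon_n$; and convergence comes from $\cF_n=\cF_n^{\varepsilon_n}$ with $\diam(\Xi_n)\to0$. The only difference is that you check the set-level condition \eqref{eq:outer-map-refinement} directly, including why an element of $\Xi_{n+1}$ inside $|\cU|_{\susp}$ lies in a single element of $\cU$ (nonempty interior plus the grid property), whereas the paper checks $q_{mn}(\xi')\in\cF_n(q_{mn}(\xi))$ elementwise for all $m\geq n$ and leaves the passage to sets implicit.
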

\begin{proof}
Fix $n\geq0$, $\xi\in\Xi_n$, and $x\in|\xi|_{\susp}$. If $\xi'\notin\cF_n(\xi)$, then $|\xi'|_{\susp}\cap B_{\varepsilon_n}(f_\tau(|\xi|_{\susp}))=\emptyset$, and hence $f_\tau(x)\notin|\xi'|_{\susp}$. Consequently,
\[
  f_\tau(x)\in\susp X\setminus\bigcup_{\xi'\in\Xi_n\setminus\cF_n(\xi)}|\xi'|_{\susp}
  \subseteq\Int_{\susp X}|\cF_n(\xi)|_{\susp}.
\]

Now, for $m\geq n$, let $\xi,\xi'\in\Xi_m$ with $\xi'\in\cF_m(\xi)$. Since $|\xi|_{\susp}\subseteq|q_{mn}(\xi)|_{\susp}$, $|\xi'|_{\susp}\subseteq|q_{mn}(\xi')|_{\susp}$, and $\varepsilon_m\leq\varepsilon_n$, we have
\[
  |q_{mn}(\xi')|_{\susp}\cap
  B_{\varepsilon_n}\!\left(f_\tau(|q_{mn}(\xi)|_{\susp})\right)\neq\emptyset.
\]
Thus $q_{mn}(\xi')\in\cF_n(q_{mn}(\xi))$, so $(\cF_n)$ is a cofiltration.

Finally, by definition $\cF_n=\cF_n^{\varepsilon_n}$, and therefore $\cF_n^{\min}(\xi)\subseteq\cF_n(\xi)=\cF_n^{\varepsilon_n}(\xi)$. Since $\diam(\Xi_n)\to0$ by Proposition~\ref{prop:suspension-grid} and $\varepsilon_n\to0$, the cofiltration is convergent.
\end{proof}

The following propositions construct a convergent cofiltration of outer approximations on $(\Xi_n)$ from compatible enclosures of $f_\tau$ on a refining sequence of finite compact covers.

\begin{proposition}
  \label{prop:cover-grid-transfer}
  Let $(\tC_q)_{q\in\cC}$ be a finite cover of $\susp X$ and suppose that $\cR:\cC\mvmap\cC$ satisfies
  \[
    f_\tau(\tC_q)\subseteq\bigcup_{q'\in\cR(q)}\tC_{q'}.
  \]
  On a grid $\Xi$ of $\susp X$, define $\cF^{\cC}:\Xi\mvmap\Xi$ by
  \begin{equation}
    \cF^{\cC}(\xi):=\setdef{\xi'\in\Xi}{\exists q\in\cC,\ q'\in\cR(q),\ |\xi|_{\susp}\cap\tC_q\neq\emptyset,\ |\xi'|_{\susp}\cap\tC_{q'}\neq\emptyset}.
    \label{eq:cover-grid-transfer}
  \end{equation}
  Then $\cF^{\cC}$ is an outer approximation of $f_\tau$.
\end{proposition}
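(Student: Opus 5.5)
We fix $\xi\in\Xi$ and $x\in|\xi|_{\susp}$, and show that $f_\tau(x)\in\Int_{\susp X}|\cF^{\cC}(\xi)|_{\susp}$, mirroring the proof of Proposition~\ref{prop:suspension-multivalued-map}. The only inputs are that $(\tC_q)$ covers $\susp X$, that $\cR$ encloses $f_\tau$, and that the grid elements are closed and cover $\susp X$. We will assume, as the phrase ``compact covers'' preceding the statement indicates, that each $\tC_q$ is compact, hence closed.

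First, since the $\tC_q$ cover $\susp X$, choose $q$ with $x\in\tC_q$. Then $|\xi|_{\susp}\cap\tC_q\neq\emptyset$. By the enclosure hypothesis, $f_\tau(x)\in\tC_{q'}$ for some $q'\in\cR(q)$.

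Next, let $\cV:=\setdef{\xi'\in\Xi}{|\xi'|_{\susp}\cap\tC_{q'}\neq\emptyset}$. By \eqref{eq:cover-grid-transfer}, $\cV\subseteq\cF^{\cC}(\xi)$. The point is that $\tC_{q'}$ lies in the interior of $|\cV|_{\susp}$. Indeed, the complementary grid elements $\xi''\notin\cV$ are disjoint from $\tC_{q'}$. Their union $C''$ is closed, since it is a finite union of closed sets, and it is disjoint from $\tC_{q'}$. Because the grid covers $\susp X$, we have $\susp X\setminus C''\subseteq|\cV|_{\susp}$, and this set is open. Hence
\[
  f_\tau(x)\in\tC_{q'}\subseteq\susp X\setminus C''\subseteq\Int_{\susp X}|\cV|_{\susp}\subseteq\Int_{\susp X}|\cF^{\cC}(\xi)|_{\susp}.
\]
Since $x$ was arbitrary, $f_\tau(|\xi|_{\susp})\subseteq\Int_{\susp X}|\cF^{\cC}(\xi)|_{\susp}$, which is the outer approximation property of Definition~\ref{def:outer-approximation}.

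The main subtlety is this interior step. It uses only that the complement of $|\cV|_{\susp}$ within the grid is a finite union of closed sets missing $f_\tau(x)$. Closedness of $\tC_{q'}$ is in fact not needed, because the argument is pointwise: the complementary elements miss $\tC_{q'}$, and therefore they miss $f_\tau(x)$. So the proof goes through with only the hypotheses as stated.
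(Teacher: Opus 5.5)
Your proof is correct and takes essentially the same approach as the paper: choose $q$ with $x\in\tC_q$ and $q'\in\cR(q)$ with $f_\tau(x)\in\tC_{q'}$, then note that the grid elements outside $\cF^{\cC}(\xi)$ form a closed set that misses $f_\tau(x)$. Passing through the intermediate family $\cV$, and observing that compactness of the $\tC_q$ is not needed, are only cosmetic differences.
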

\begin{proof}
  Given $x\in|\xi|_{\susp}$, choose $q\in\cC$ and $q'\in\cR(q)$ such that $x\in\tC_q$ and $f_\tau(x)\in\tC_{q'}$. Note that if $f_\tau(x)\in|\xi'|_{\susp}$, then $|\xi'|_{\susp}\cap\tC_{q'}\neq\emptyset$, so $\xi'\in\cF^{\cC}(\xi)$. Consequently,
  \[
    f_\tau(x)\in\susp X\setminus\bigcup_{\xi'\in\Xi\setminus\cF^{\cC}(\xi)}|\xi'|_{\susp}\subseteq\Int_{\susp X}|\cF^{\cC}(\xi)|_{\susp}.
  \]
\end{proof}

\begin{proposition}
  \label{prop:cover-transfer-convergence}
  Let $(\tC_{n,q})_{q\in\cC_n}$ be finite covers of $\susp X$ by nonempty compact sets such that
  \[
    c_n:=\max_q\diam(\tC_{n,q})\to0.
  \]
  Suppose that there are $\cR_n:\cC_n\mvmap\cC_n$ and $\delta_n\geq0$ with $\delta_n\to0$ satisfying
  \[
      f_\tau(\tC_{n,q})\subseteq\bigcup_{q'\in\cR_n(q)}\tC_{n,q'}\subseteq\overline B_{\delta_n}(f_\tau(\tC_{n,q})),\quad\forall\,q\in\cC_n.
  \]
  Suppose also that, for $m\geq n$, there are maps $b_{mn}:\cC_m\to\cC_n$ satisfying
  \[
    \tC_{m,q}\subseteq\tC_{n,b_{mn}(q)},
    \quad
    b_{mn}(\cR_m(q))\subseteq\cR_n(b_{mn}(q)),
    \quad \forall q\in\cC_m.
  \]
  Then $\cF_n^{\cC}:\Xi_n\mvmap\Xi_n$ as in~\eqref{eq:cover-grid-transfer} are a convergent cofiltration of outer approximations of $f_\tau$.
\end{proposition}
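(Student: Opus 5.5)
Three things must be checked for the maps $\cF_n^{\cC}$: each is an outer approximation of $f_\tau$, the sequence is a cofiltration in the sense of~\eqref{eq:outer-map-refinement}, and it is convergent, meaning $\cF_n^{\min}\subseteq\cF_n^{\cC}\subseteq\cF_n^{r_n}$ for some $r_n\to0$. The first is exactly Proposition~\ref{prop:cover-grid-transfer} applied at each level $n$ with the cover $(\tC_{n,q})$ and $\cR_n$. That result also gives $\cF_n^{\min}(\xi)\subseteq\cF_n^{\cC}(\xi)$: if $f_\tau(x)\in|\xi'|_{\susp}$ for some $x\in|\xi|_{\susp}$, choose $q$ with $x\in\tC_{n,q}$ and $q'\in\cR_n(q)$ with $f_\tau(x)\in\tC_{n,q'}$; then $\xi'\in\cF_n^{\cC}(\xi)$. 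In addition, $\diam(\Xi_n)\to0$ holds by Proposition~\ref{prop:suspension-grid}.

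For the upper bound, suppose $\xi'\in\cF_n^{\cC}(\xi)$, witnessed by $q$, $q'\in\cR_n(q)$, $y\in|\xi|_{\susp}\cap\tC_{n,q}$ and $z\in|\xi'|_{\susp}\cap\tC_{n,q'}$.
\begin{itemize}
\item By hypothesis $z\in\overline B_{\delta_n}(f_\tau(\tC_{n,q}))$, so there is $w\in\tC_{n,q}$ with $d(z,f_\tau(w))\leq\delta_n$.
\item Since $d(w,y)\leq c_n$, uniform continuity of $f_\tau$ on the compact space $\susp X$ gives a modulus $\eta$ with $\eta(c)\to0$ as $c\to0$ and $d(f_\tau(w),f_\tau(y))\leq\eta(c_n)$.
\end{itemize}
Hence $d(z,f_\tau(|\xi|_{\susp}))\leq\delta_n+\eta(c_n)$. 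Setting $r_n:=\delta_n+\eta(c_n)+2^{-n}$ makes the inequality strict, so $z\in B_{r_n}(f_\tau(|\xi|_{\susp}))\cap|\xi'|_{\susp}$ and $\xi'\in\cF_n^{r_n}(\xi)$. Since $r_n\to0$, this is the convergence condition.

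The cofiltration property is where I expect the main difficulty. By the same reduction used in the proof of Proposition~\ref{prop:suspension-multivalued-map} (through the maps $q_{mn}$), it suffices to show that for $m\geq n$, $\xi\in\Xi_m$ and $\xi'\in\cF_m^{\cC}(\xi)$ we have $q_{mn}(\xi')\in\cF_n^{\cC}(q_{mn}(\xi))$. Let $p\in\cC_m$ and $p'\in\cR_m(p)$ witness membership, so $|\xi|_{\susp}\cap\tC_{m,p}\neq\emptyset$ and $|\xi'|_{\susp}\cap\tC_{m,p'}\neq\emptyset$. Put $q:=b_{mn}(p)$ and $q':=b_{mn}(p')$. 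The compatibility hypothesis gives $q'\in\cR_n(q)$. Nesting gives $\tC_{m,p}\subseteq\tC_{n,q}$ and $\tC_{m,p'}\subseteq\tC_{n,q'}$, and we also have $|\xi|_{\susp}\subseteq|q_{mn}(\xi)|_{\susp}$ and $|\xi'|_{\susp}\subseteq|q_{mn}(\xi')|_{\susp}$. Therefore $|q_{mn}(\xi)|_{\susp}\cap\tC_{n,q}\neq\emptyset$ and $|q_{mn}(\xi')|_{\susp}\cap\tC_{n,q'}\neq\emptyset$, which places $q_{mn}(\xi')$ in $\cF_n^{\cC}(q_{mn}(\xi))$.

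It remains to pass from single elements to the set-level order~\eqref{eq:outer-map-refinement}. Take $\cU'\subseteq\Xi_m$ and $\cU\subseteq\Xi_n$ with $|\cU'|_{\susp}=|\cU|_{\susp}$; the refinement structure then gives $q_{mn}(\cU')=\cU$. For each $\xi'\in\cF_m^{\cC}(\cU')$ we get $|\xi'|_{\susp}\subseteq|q_{mn}(\xi')|_{\susp}\subseteq|\cF_n^{\cC}(\cU)|_{\susp}$, and taking the union over $\xi'$ yields $|\cF_m^{\cC}(\cU')|_{\susp}\subseteq|\cF_n^{\cC}(\cU)|_{\susp}$, completing the proof.
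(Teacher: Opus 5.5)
Your proof is correct and follows the paper's argument. The outer-approximation property and $\cF_n^{\min}\subseteq\cF_n^{\cC}$ come from Proposition~\ref{prop:cover-grid-transfer}, the cofiltration property comes from $b_{mn}$ and $q_{mn}$, and convergence comes from a triangle inequality using $\delta_n$ and uniform continuity of $f_\tau$.

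The differences are minor. You bound only a single point of $|\xi'|_{\susp}$, which is all that $\cF_n^{r_n}$ requires, so the paper's $\diam(\Xi_n)$ term is unnecessary in your radius. You also spell out the passage from the element-level statement to the set-level order~\eqref{eq:outer-map-refinement}, which the paper leaves implicit.
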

\begin{proof}
  It suffices to show that $(\cF_n^{\cC})$ is a cofiltration and that $\cF_n^{\cC}(\xi)\subseteq\cF_n^{r_n}(\xi)$ for some $r_n>0$ with $r_n\to0$, since the outer approximation follows from Proposition~\ref{prop:cover-grid-transfer} and~\eqref{eq:cover-grid-transfer} implies $\cF_n^{\min}(\xi)\subseteq\cF_n^{\cC}(\xi)$.

  For $m\geq n$, recall that $q_{mn}:\Xi_m\to\Xi_n$ assigns each element of $\Xi_m$ to the element of $\Xi_n$ containing it. Fix $\xi,\xi'\in\Xi_m$ with $\xi'\in\cF_m^{\cC}(\xi)$, and choose $q\in\cC_m$, $q'\in\cR_m(q)$ as in~\eqref{eq:cover-grid-transfer}. Since $\tC_{m,q}\subseteq\tC_{n,b_{mn}(q)}$ and $\tC_{m,q'}\subseteq\tC_{n,b_{mn}(q')}$,
  \[
    \begin{aligned}
      \emptyset\neq|\xi|_{\susp}\cap\tC_{m,q}&\subseteq|q_{mn}(\xi)|_{\susp}\cap\tC_{n,b_{mn}(q)},\\
      \emptyset\neq|\xi'|_{\susp}\cap\tC_{m,q'}&\subseteq|q_{mn}(\xi')|_{\susp}\cap\tC_{n,b_{mn}(q')}.
    \end{aligned}
  \]
  Moreover, $b_{mn}(q')\in b_{mn}(\cR_m(q))\subseteq\cR_n(b_{mn}(q))$, so~\eqref{eq:cover-grid-transfer} implies $q_{mn}(\xi')\in\cF_n^{\cC}(q_{mn}(\xi))$, proving that $(\cF_n^{\cC})$ is a cofiltration. Since $c_n\to0$, uniform continuity implies that
  \[
    \max_{q\in\cC_n}\diam(f_\tau(\tC_{n,q}))\to0.
  \]
  Together with $\diam(\Xi_n)\to0$ and $\delta_n\to0$, this yields
  \[
    r_n:=\diam(\Xi_n)+\delta_n+\max_{q\in\cC_n}\diam(f_\tau(\tC_{n,q}))+2^{-n}\to0.
  \]
  Finally, for $\xi'\in\cF_n^{\cC}(\xi)$, choose $q\in\cC_n$, $q'\in\cR_n(q)$ and
  \[
    x\in|\xi|_{\susp}\cap\tC_{n,q},
    \quad x'\in|\xi'|_{\susp}\cap\tC_{n,q'}.
  \]
  The hypothesis
  \[
    \bigcup_{q'\in\cR_n(q)}\tC_{n,q'}\subseteq\overline B_{\delta_n}(f_\tau(\tC_{n,q}))
  \]
  implies $d_{\susp X}(x',f_\tau(\tC_{n,q}))\leq\delta_n$. Thus, for $y\in|\xi'|_{\susp}$,
  \[
    \begin{aligned}
      d_{\susp X}(y,f_\tau(|\xi|_{\susp}))&\leq d_{\susp X}(y,f_\tau(x))\\
      &\leq d_{\susp X}(y,x')+d_{\susp X}(x',f_\tau(\tC_{n,q}))+\diam(f_\tau(\tC_{n,q}))\\
      &\leq\diam(\Xi_n)+\delta_n+\diam(f_\tau(\tC_{n,q}))<r_n.
    \end{aligned}
  \]
  It follows that
  \[
    |\cF_n^{\cC}(\xi)|_{\susp}\subseteq B_{r_n}(f_\tau(|\xi|_{\susp})),
    \quad\forall\,\xi\in\Xi_n,
  \]
  hence $\cF_n^{\cC}(\xi)\subseteq\cF_n^{r_n}(\xi)$.
\end{proof}

The punchline is the following. The families
\[
  \setdef{\incl(|\mu|)}{\mu\in\cX_n}\cup\setdef{\pi\left((|\mu|\cap G)\times\Ink\right)}{\mu\in\cX_n,\ \Ink\in\cI_n}
\]
are compact covers of $\susp X$ whose diameters tend to zero and whose elements are contained in elements of each preceding cover. If we denote these covers by $(\tC_{n,q})_{q\in\cC_n}$, then Proposition~\ref{prop:cover-transfer-convergence} applies with
\[
  \cR_n(q):=\setdef{q'\in\cC_n}{\tC_{n,q'}\cap f_\tau(\tC_{n,q})\neq\emptyset}, \text{ and } \delta_n=c_n.
\]
 This bypasses the need to compute $f_\tau(|\xi|_{\susp})$ since \eqref{eq:cover-grid-transfer} depends only on $\cR_n$ and the nonempty intersections $|\xi|_{\susp}\cap\tC_{n,q}$.

\subsection{\texorpdfstring{$\tau$}{Tau}-maps}

The following lemma relates the $\omega$-limits of the suspension semiflow and its $\tau$-map $f_\tau$.

\begin{lemma}
  \label{lem:fixed-time-map}
  For every $\tU\subseteq\susp X$,
  \[
    \omega(\tU,\Phi)=\Phi\left([0,\tau],\omega(\tU,f_\tau)\right).
  \]
\end{lemma}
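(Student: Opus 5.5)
We prove the identity by two inclusions. The only inputs are the definitions of $\omega(\cdot,\Phi)$ and $\omega(\cdot,f_\tau)$ as intersections of closures of tails, the compactness of $\susp X$ (Theorem~\ref{thm:hybrid-suspension-semiflow}), and the continuity of $\Phi$ on $[0,\tau]\times\susp X$. We write $\tK:=\omega(\tU,f_\tau)=\bigcap_{m\geq0}\cl\bigl(\bigcup_{k\geq m}f_\tau^k(\tU)\bigr)$. This set is compact. By continuity of $\Phi$ and compactness of $[0,\tau]$, the set $\Phi([0,\tau],\tK)$ is compact as well.

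\emph{Inclusion $\supseteq$.} Let $y\in\tK$ and $s\in[0,\tau]$. Since $\tK$ is a decreasing intersection of nonempty compact tail closures in a compact metrizable space, there are $x_j\in\tU$ and $k_j\to\infty$ with $f_\tau^{k_j}(x_j)=\Phi(k_j\tau,x_j)\to y$. Continuity of $\Phi(s,\cdot)$ then gives
\[
  \Phi(s+k_j\tau,x_j)\to\Phi(s,y).
\]
Given $T\geq0$, we have $s+k_j\tau\geq T$ eventually. Hence $\Phi(s,y)\in\cl\,\Phi([T,\infty),\tU)$ for every $T$, that is, $\Phi(s,y)\in\omega(\tU,\Phi)$.

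\emph{Inclusion $\subseteq$.} Let $z\in\omega(\tU,\Phi)$. Choose $x_j\in\tU$ and $t_j\to\infty$ with $\Phi(t_j,x_j)\to z$. Write $t_j=k_j\tau+s_j$ with $k_j\in\Zp$ and $s_j\in[0,\tau)$, and choose the $t_j$ large enough that $k_j\to\infty$. By compactness of $\susp X\times[0,\tau]$, pass to a subsequence with
\[
  f_\tau^{k_j}(x_j)\to y, \qquad s_j\to s\in[0,\tau].
\]
Then $y\in\tK$, because $f_\tau^{k_j}(x_j)$ eventually lies in each tail $\bigcup_{k\geq m}f_\tau^k(\tU)$. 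Joint continuity of $\Phi$ gives
\[
  \Phi(t_j,x_j)=\Phi\bigl(s_j,f_\tau^{k_j}(x_j)\bigr)\to\Phi(s,y).
\]
Therefore $z=\Phi(s,y)\in\Phi([0,\tau],\tK)$.

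The only delicate point is the use of joint continuity of $\Phi$ in $(s,x)$ in the second inclusion, where both $s_j$ and the base point vary at once. This is exactly what Theorem~\ref{thm:hybrid-suspension-semiflow} supplies under TGC. The remaining ingredient is the sequential characterization of both $\omega$-limits, which holds because $\susp X$ is compact metrizable. If $\tU=\emptyset$, both sides are empty, so the identity holds trivially.
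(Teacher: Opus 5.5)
Your proposal is correct and follows essentially the same route as the paper's proof. For $\supseteq$ you push sequences $f_\tau^{k_j}(x_j)\to y$ forward by time $s$; for $\subseteq$ you write $t_j=k_j\tau+s_j$, pass to a subsequence with $s_j\to s$ and $f_\tau^{k_j}(x_j)\to y$, and conclude by joint continuity of $\Phi$.
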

\begin{proof}
Set $\tA:=\omega(\tU,f_\tau)$. If $z\in\tA$ and $s\in[0,\tau]$, choose $x_j\in\tU$ and $n_j\to\infty$ such that $f_\tau^{n_j}(x_j)\to z$. Since $f_\tau^{n_j}=\Phi(n_j\tau,\cdot)$, continuity gives
\[
  \Phi(s,z)=\lim_{j\to\infty}\Phi(n_j\tau+s,x_j)\in\omega(\tU,\Phi).
\]
Hence $\Phi([0,\tau],\tA)\subseteq\omega(\tU,\Phi)$. Conversely, if $y\in\omega(\tU,\Phi)$, choose $x_j\in\tU$ and $t_j\to\infty$ such that $\Phi(t_j,x_j)\to y$. Write $t_j=n_j\tau+s_j$ with $s_j\in[0,\tau)$, and we can pass to a subsequence such that $s_j\to s$ and $f_\tau^{n_j}(x_j)\to z$. Since $n_j\to\infty$, we have $z\in\tA$, and continuity gives $y=\Phi(s,z)$.
\end{proof}

Consequently, $\Phi$ and $f_\tau$ have the same attracting neighborhoods.

\begin{proposition}
  \label{prop:fixed-time-neighborhoods}
  The attracting neighborhoods coincide,
  \[
    \sANbhd(\susp X,\Phi)=\sANbhd(\susp X,f_\tau),
  \]
  and $\omega(\tU,\Phi)=\omega(\tU,f_\tau)$ for every $\tU$ in this family.
\end{proposition}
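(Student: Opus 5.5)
The plan is to derive both claims from Lemma~\ref{lem:fixed-time-map}, together with the fact recalled in Section~\ref{subsec:lattices} that $\sAtt(\susp X,\Phi)=\sAtt(\susp X,f_\tau)$ as sets, via the identity lattice isomorphism of~\cite{Kalies:Mischaikow:Vandervorst:2016}. That fact applies because $\Phi$ is a continuous semiflow on the compact metrizable space $\susp X$ by Theorem~\ref{thm:hybrid-suspension-semiflow}. The key consequence is that every attractor of $f_\tau$ is invariant under $\Phi$, since $\Phi$-attractors are $\omega$-limits of $\Phi$ and hence $\Phi$-invariant.

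First, the easy inclusion $\sANbhd(\susp X,\Phi)\subseteq\sANbhd(\susp X,f_\tau)$. Take $s=0$ in Lemma~\ref{lem:fixed-time-map}. This gives $\omega(\tU,f_\tau)\subseteq\Phi([0,\tau],\omega(\tU,f_\tau))=\omega(\tU,\Phi)$ for every $\tU\subseteq\susp X$. So if $\omega(\tU,\Phi)\subseteq\Int_{\susp X}\tU$, then also $\omega(\tU,f_\tau)\subseteq\Int_{\susp X}\tU$.

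Next, suppose $\tU$ is an attracting neighborhood of $f_\tau$, and let $\tA:=\omega(\tU,f_\tau)\subseteq\Int_{\susp X}\tU$. Then $\tA\in\sAtt(\susp X,f_\tau)=\sAtt(\susp X,\Phi)$, so $\tA$ is invariant under $\Phi$, and in particular $\Phi([0,\tau],\tA)=\tA$. Lemma~\ref{lem:fixed-time-map} then gives $\omega(\tU,\Phi)=\Phi([0,\tau],\tA)=\tA\subseteq\Int_{\susp X}\tU$. Hence $\tU\in\sANbhd(\susp X,\Phi)$ and $\omega(\tU,\Phi)=\omega(\tU,f_\tau)$. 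Since the two families of attracting neighborhoods coincide, this argument applies to every member of the common family, which yields the equality of $\omega$-limits.

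The main obstacle is the $\Phi$-invariance of $\tA=\omega(\tU,f_\tau)$. From Lemma~\ref{lem:fixed-time-map} alone one only knows that $\tA$ is $f_\tau$-invariant. For an arbitrary $\tU$ the set $\Phi([0,\tau],\tA)$ can be strictly larger, for instance when $\tA$ is a single point of a periodic orbit of period $\tau$. The attracting hypothesis is exactly what excludes this, and I would use it through the cited identification of attractors.

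If a self-contained argument is preferred, I would show directly that $\tA$ is $\Phi$-forward invariant. Choose a closed neighborhood $N$ with $\tA\subseteq\Int N\subseteq N\subseteq\Int\tU$, so that $N$ is again $f_\tau$-attracting with $\omega(N,f_\tau)=\tA$. Then use continuity of $\Phi$ on $[0,\tau]\times\tA$ to compare $\omega(\Phi(s,N),f_\tau)=\Phi(s,\tA)$ with $\tA$, and apply the standard argument that an attractor of a time-$\tau$ map of a semiflow is invariant under the whole semiflow.
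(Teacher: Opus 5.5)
Your proposal is correct. The easy inclusion $\sANbhd(\susp X,\Phi)\subseteq\sANbhd(\susp X,f_\tau)$, obtained from $s=0$ in Lemma~\ref{lem:fixed-time-map}, is exactly the paper's argument.

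The difference lies in how you get the key fact that $\tA=\omega(\tU,f_\tau)$ is $\Phi$-invariant. You import it from the identity $\sAtt(\susp X,\Phi)=\sAtt(\susp X,f_\tau)$, recalled in Section~\ref{subsec:lattices} from \cite{Kalies:Mischaikow:Vandervorst:2016}. The paper instead proves it directly:
\begin{enumerate}[(a)]
  \item it chooses $\delta>0$ with $\Phi([0,\delta],\tA)\subseteq\Int_{\susp X}(\tU)$;
  \item for $0\le s\le h=\tau/m<\delta$, each $\Phi(s,\tA)$ is $f_\tau$-invariant and contained in $\tU$, hence contained in $\omega(\tU,f_\tau)=\tA$;
  \item it uses $\Phi(\tau,\tA)=f_\tau(\tA)=\tA$ to upgrade this to $\Phi(s,\tA)=\tA$.
\end{enumerate}
The paper then \emph{derives} Corollary~\ref{cor:fixed-time-attractors} from this proposition, and that corollary is precisely the identity you cite.

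Your route is shorter and logically sound, since the cited result applies to the continuous semiflow $\Phi$ on the compact metrizable space $\susp X$. It does, however, make that corollary a mere restatement of the citation. The paper's argument is self-contained and re-establishes the fact in the suspension setting.

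Your fallback sketch is essentially the paper's argument, and you can drop the auxiliary neighborhood $N$. Since $\Phi(s,\cdot)$ commutes with $f_\tau$, the set $\Phi(s,\tA)$ is itself $f_\tau$-invariant. For small $s$ it lies in $\tU$, so it lies in $\tA$. By the semiflow property this gives $\Phi([0,\tau],\tA)\subseteq\tA$. Together with $\tA\subseteq\Phi([0,\tau],\tA)$, this is all Lemma~\ref{lem:fixed-time-map} needs. So the full invariance $\Phi(s,\tA)=\tA$ established in the paper is more than this proposition requires.
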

\begin{proof}
Suppose first that $\tU\in\sANbhd(\susp X,f_\tau)$ and set $\tA:=\omega(\tU,f_\tau)$. Then $\tA\subseteq\Int_{\susp X}(\tU)$, and $\tA$ is compact with $f_\tau(\tA)=\tA$. By continuity of $\Phi$, there exists $\delta>0$ such that $\Phi([0,\delta],\tA)\subseteq\Int_{\susp X}(\tU)$. Choose $m$ such that $h:=\tau/m<\delta$, and write $\tA_s:=\Phi(s,\tA)$. For $0\leq s\leq h$, the set $\tA_s$ is $f_\tau$-invariant and contained in $\tU$, hence $\tA_s\subseteq\omega(\tU,f_\tau)=\tA$. Therefore
\[
  \tA=\tA_0\supseteq\tA_h\supseteq\cdots\supseteq\tA_{mh}
  =\tA_\tau=f_\tau(\tA)=\tA,
\]
so $\tA_h=\tA$. If $0\leq s\leq h$, then $\tA_s,\tA_{h-s}\subseteq\tA$, and hence
\[
  \tA=\tA_h=\Phi(s,\tA_{h-s})\subseteq\tA_s\subseteq\tA.
\]
Thus $\Phi(s,\tA)=\tA$ for $s\in[0,h]$. For any $t\geq0$, write $t=kh+s$ with $k\in\mathbb{N}$ and $s\in[0,h)$; then the semiflow property gives $\Phi(t,\tA)=\tA$. Lemma~\ref{lem:fixed-time-map} now yields $\omega(\tU,\Phi)=\tA\subseteq\Int_{\susp X}(\tU)$, so $\tU\in\sANbhd(\susp X,\Phi)$ and $\omega(\tU,\Phi)=\omega(\tU,f_\tau)$.

Conversely, if $\tU\in\sANbhd(\susp X,\Phi)$ and $\tA:=\omega(\tU,f_\tau)$, then Lemma~\ref{lem:fixed-time-map} gives $\tA\subseteq\Phi([0,\tau],\tA)=\omega(\tU,\Phi)\subseteq\Int_{\susp X}(\tU)$. Hence $\tU\in\sANbhd(\susp X,f_\tau)$, and the preceding argument again gives $\omega(\tU,\Phi)=\omega(\tU,f_\tau)$.
\end{proof}

\begin{corollary}
  \label{cor:fixed-time-attractors}
  The identity map is a bounded lattice isomorphism
  \[
    \sAtt(\susp X,\Phi)\longrightarrow\sAtt(\susp X,f_\tau).
  \]
\end{corollary}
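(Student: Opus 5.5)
By Proposition~\ref{prop:fixed-time-neighborhoods}, both $\sAtt(\susp X,\Phi)$ and $\sAtt(\susp X,f_\tau)$ are exactly the sets $\omega(\tU,\cdot)$ for $\tU$ in the common family $\sANbhd(\susp X,\Phi)=\sANbhd(\susp X,f_\tau)$, and on this family $\omega(\tU,\Phi)=\omega(\tU,f_\tau)$. So the two collections of attractors coincide as sets, and the identity is a well-defined bijection. It remains to check that it respects the lattice structure.

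Both lattices are ordered by inclusion, and the identity preserves and reflects this order. An order isomorphism between lattices automatically preserves joins, meets, and bounds, since these are least upper bounds, greatest lower bounds, and extremal elements. This already gives the result.

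For an explicit check of the formulas: joins are $A\cup B$ in both lattices. Meets are $\omega(A\cap B,\Phi)$ and $\omega(A\cap B,f_\tau)$ respectively. The set $A\cap B$ need not be an attracting neighborhood, but the order-theoretic argument sidesteps this. Alternatively, choose $\tU,\tV\in\sANbhd$ with $\omega(\tU)=A$ and $\omega(\tV)=B$. The epimorphism \eqref{eq:classical-omega-epi} gives $A\wedge B=\omega(\tU\cap\tV,\cdot)$ for either system, and these agree because $\tU\cap\tV$ lies in the common family.

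For the bounds, the least element is $\emptyset$ in both. The greatest is $\omega(\susp X,\Phi)=\omega(\susp X,f_\tau)$, since $\susp X$ is trivially an attracting neighborhood and the two $\omega$-limits agree on it by Proposition~\ref{prop:fixed-time-neighborhoods}.

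The only substantive point is that the attractor sets agree. Proposition~\ref{prop:fixed-time-neighborhoods} supplies this, so I expect no real obstacle.
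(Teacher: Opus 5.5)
Your proposal is correct and follows the paper's proof: Proposition~\ref{prop:fixed-time-neighborhoods} makes the two attractor collections equal as sets, and the identity is then an inclusion-order isomorphism, hence a bounded lattice isomorphism. Your explicit checks of meets via $\tU\cap\tV$ and of the bounds are correct but not needed.
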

\begin{proof}
Proposition~\ref{prop:fixed-time-neighborhoods} implies that $\Phi$ and $f_\tau$ have the same attracting neighborhoods and assign the same $\omega$-limit to each of them. Hence $\sAtt(\susp X,\Phi)=\sAtt(\susp X,f_\tau)$. The identity is therefore an order isomorphism, and consequently a bounded lattice isomorphism.
\end{proof}

\subsection{Finite realization}

In this subsection, $(\cF_n)$ may be any convergent cofiltration of outer approximations of $f_\tau$ on the grids $(\Xi_n)$, including the maps in Definition~\ref{def:suspension-multivalued-map} and those obtained from Proposition~\ref{prop:cover-transfer-convergence}.

\begin{corollary}
  \label{cor:realization-suspension}
  If $\tcA\subseteq\sAtt(\susp X,\Phi)$ is a finite sublattice, then for every sufficiently large $n$ there is a lattice monomorphism $\tell_n:\tcA\injection\sInvsetp(\Xi_n,\cF_n)$ such that, for every $\tA\in\tcA$,
  \[
  \tU_{\tA}:=|\tell_n(\tA)|_{\susp} \in \sANbhdR(\susp X,f_\tau) \quad \text{and} \quad \omega(\tU_{\tA},f_\tau)=\tA.
  \]
\end{corollary}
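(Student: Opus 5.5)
The plan is to transfer $\tcA$ to the $\tau$-map and then apply Theorem~\ref{thm:realization}. By Corollary~\ref{cor:fixed-time-attractors}, the identity map is a bounded lattice isomorphism $\sAtt(\susp X,\Phi)\to\sAtt(\susp X,f_\tau)$. Hence $\tcA$ is also a finite sublattice of $\sAtt(\susp X,f_\tau)$, with the same joins, meets, and bounds.

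By Proposition~\ref{prop:suspension-grid}, $\susp X$ is a compact metric space carrying the contracting cofiltration $(\Xi_n)$. The map $f_\tau$ is continuous by Theorem~\ref{thm:hybrid-suspension-semiflow}. The standing assumption of this subsection is that $(\cF_n)$ is a convergent cofiltration of outer approximations of $f_\tau$ on $(\Xi_n)$; for the maps of Definition~\ref{def:suspension-multivalued-map} this is Proposition~\ref{prop:suspension-multivalued-map}. Theorem~\ref{thm:realization} therefore applies. It yields $n_{\tcA}$ such that for every $n\geq n_{\tcA}$ there is a lattice monomorphism $\tell_n:\tcA\to\sInvsetp(\Xi_n,\cF_n)$ with $\omega(|\tell_n(\tA)|_{\susp},f_\tau)=\tA$ for all $\tA\in\tcA$.

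It remains to show that $\tU_{\tA}:=|\tell_n(\tA)|_{\susp}$ lies in $\sANbhdR(\susp X,f_\tau)$. Since $\tell_n(\tA)$ is forward invariant for an outer approximation, \eqref{eq:invset-realization} gives $f_\tau(\tU_{\tA})\subseteq\Int_{\susp X}\tU_{\tA}$. The map \eqref{eq:invset-neighborhood-map} then places $\tU_{\tA}$ in $\sANbhdR(\susp X,f_\tau)$. Regular closedness is also direct: $\tU_{\tA}$ is a finite union of regular closed grid elements, hence lies in $\scR_{\Xi_n}(\susp X)\subseteq\scR(\susp X)$.

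No step is a genuine obstacle. The one point needing care is that Theorem~\ref{thm:realization} is stated for attractors of $f$, whereas $\tcA$ consists of attractors of $\Phi$. Corollary~\ref{cor:fixed-time-attractors} settles this, because it identifies the two lattices as sets with the same operations. Proposition~\ref{prop:fixed-time-neighborhoods} additionally gives $\omega(\tU_{\tA},\Phi)=\omega(\tU_{\tA},f_\tau)=\tA$ and $\tU_{\tA}\in\sANbhdR(\susp X,\Phi)$, which will be needed when passing to the hybrid system.
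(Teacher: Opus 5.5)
Your proposal is correct and takes the same route as the paper. You identify $\tcA$ with a finite sublattice of $\sAtt(\susp X,f_\tau)$ via Corollary~\ref{cor:fixed-time-attractors}, apply Theorem~\ref{thm:realization} to the convergent cofiltration $(\cF_n)$, and use forward invariance of $\tell_n(\tA)$ together with \eqref{eq:invset-neighborhood-map} to place $\tU_{\tA}$ in $\sANbhdR(\susp X,f_\tau)$.
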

\begin{proof}
  For every sufficiently large $n$, the existence of a lattice monomorphism $\tell_n$ satisfying $\omega(|\tell_n(\tA)|_{\susp},f_\tau)=\tA$ for all $\tA\in\tcA$ follows from Theorem~\ref{thm:realization}, since $(\cF_n)$ is a convergent cofiltration of outer approximations of $f_\tau$ and Corollary~\ref{cor:fixed-time-attractors} identifies $\tcA$ with a finite sublattice of $\sAtt(\susp X,f_\tau)$. Since $\tell_n(\tA)$ is forward invariant under $\cF_n$, \eqref{eq:invset-neighborhood-map} implies that $\tU_{\tA}\in\sANbhdR(\susp X,f_\tau)$.
\end{proof}

\begin{theorem}
  \label{thm:FiniteSublattice}
  Suppose that $\cH$ satisfies the trapping guard condition. If $\cA\subseteq\sAtt(\cH)$ is a finite sublattice, then for every sufficiently large $n$ there is a lattice monomorphism $\ell_n:\cA\injection\sInvsetp(\Xi_n,\cF_n)$ such that for every $A\in\cA$,
  \[
    \omega\left(
      \rho\left(|\ell_n(A)|_{\susp}\right),
      \cH
    \right)
    =A.
  \]
\end{theorem}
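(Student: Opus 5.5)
The plan is to transport $\cA$ to the suspension, realize it there by Corollary~\ref{cor:realization-suspension}, and pull back along $\rho$ using Lemma~\ref{lem:regularized-preimage-omega}.

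First, set $\tcA:=\setdef{\susp(A)}{A\in\cA}$. By Theorem~\ref{thm:main-correspondence}, $\susp$ restricted to $\sAtt(\cH)$ is the inverse of the bounded lattice isomorphism $\incl^{-1}$. Hence $\tcA$ is a finite sublattice of $\sAtt(\susp X,\Phi)$ isomorphic to $\cA$, with $\incl^{-1}(\susp(A))=A$ for every $A\in\cA$.

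Second, apply Corollary~\ref{cor:realization-suspension} to $\tcA$. This gives, for all sufficiently large $n$, a lattice monomorphism $\tell_n:\tcA\injection\sInvsetp(\Xi_n,\cF_n)$ such that
\[
\tU_{\tA}:=|\tell_n(\tA)|_{\susp}\in\sANbhdR(\susp X,f_\tau),\qquad \omega(\tU_{\tA},f_\tau)=\tA .
\]
We then define $\ell_n:=\tell_n\circ\susp|_{\cA}$. It is a composition of a lattice isomorphism and a lattice monomorphism, so it is a lattice monomorphism $\cA\injection\sInvsetp(\Xi_n,\cF_n)$.

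Third, we check the $\omega$-limit identity. By Proposition~\ref{prop:fixed-time-neighborhoods}, $\tU_{\tA}$ is also an attracting neighborhood for $\Phi$, and $\omega(\tU_{\tA},\Phi)=\omega(\tU_{\tA},f_\tau)=\tA$. Lemma~\ref{lem:regularized-preimage-omega} therefore applies and yields
\[
\omega\bigl(\rho(|\ell_n(A)|_{\susp}),\cH\bigr)=\incl^{-1}\bigl(\omega(\tU_{\susp(A)},\Phi)\bigr)=\incl^{-1}(\susp(A))=A .
\]

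The only point needing care is the transfer between $f_\tau$ and $\Phi$ in the third step. Corollary~\ref{cor:realization-suspension} controls $\omega$-limits for $f_\tau$, while Lemma~\ref{lem:regularized-preimage-omega} requires an attracting neighborhood for $\Phi$; Proposition~\ref{prop:fixed-time-neighborhoods} closes exactly this gap. Otherwise the argument is bookkeeping.

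Two things are deliberately not claimed here. The evaluations $\rho(|\ell_n(A)|_{\susp})$ need not form a sublattice of $\sANbhdR(X,\cH)$, since $\rho$ may fail to preserve meets. The identification of $\ell_n$ with base families in $\cX_n$ is also left aside; both are deferred to the later results cited for part (iii) of Theorem~\ref{thm:main}.
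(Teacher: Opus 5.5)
Your proof is correct and matches the paper's argument step for step: you transport $\cA$ by $\susp$, realize it with Corollary~\ref{cor:realization-suspension}, pass from $f_\tau$ to $\Phi$ via Proposition~\ref{prop:fixed-time-neighborhoods}, and conclude with the identity of Lemma~\ref{lem:regularized-preimage-omega}, which the paper invokes through Corollary~\ref{thm:omega-square}. One correction to your closing aside: each $|\ell_n(A)|_{\susp}$ lies in $\tcB_n$, where $\rho$ is a bounded lattice homomorphism by Proposition~\ref{prop:finite-grid-preimage}, so the evaluations $\rho(|\ell_n(A)|_{\susp})$ are in fact closed under joins and meets, as Corollary~\ref{cor:base-generated-neighborhoods} uses.
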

\begin{proof}
  The existence of $\ell_n$ for every sufficiently large $n$ follows by composing the lattice isomorphism $A\mapsto\susp(A)$ from Theorem~\ref{thm:main-correspondence} with the monomorphism of Corollary~\ref{cor:realization-suspension} on the image of $\cA$. For $A\in\cA$, the evaluation $\tU_A:=|\ell_n(A)|_{\susp}$ belongs to $\sANbhdR(\susp X,f_\tau)$ and satisfies $\omega(\tU_A,f_\tau)=\susp(A)$. Proposition~\ref{prop:fixed-time-neighborhoods} implies $\tU_A\in\sANbhdR(\susp X,\Phi)$ and $\omega(\tU_A,\Phi)=\susp(A)$, so Corollary~\ref{thm:omega-square} implies
  \[
    \omega(\rho(\tU_A),\cH)=\incl^{-1}\left(\omega(\tU_A,\Phi)\right)=\incl^{-1}(\susp(A))=A.
  \]
\end{proof}

Recall that $d_n:\cX_n\injection\Xi_n$ maps each base grid element $\mu$ to its suspension grid element $\tmu$, with $\incl^{-1}(|\tmu|_{\susp})=|\mu|$.

\begin{corollary}
  \label{cor:base-generated-neighborhoods}
  The map
  \[
    \begin{aligned}
      \sInvsetp(\Xi_n,\cF_n)&\longrightarrow\sANbhdR(X,\cH)\cap\scR_{\cX_n}(X),\\
      \cU&\longmapsto|d_n^{-1}(\cU)|
    \end{aligned}
  \]
  is a bounded lattice homomorphism, and its composition with $\omega(\cdot,\cH)$ is a bounded lattice homomorphism into $\sAtt(\cH)$. Consequently, for $\ell_n$ as in Theorem~\ref{thm:FiniteSublattice}, the composition
  \[
    \begin{aligned}
      \cA&\longrightarrow\sANbhdR(X,\cH)\cap\scR_{\cX_n}(X),\\
      A&\longmapsto|d_n^{-1}(\ell_n(A))|
    \end{aligned}
  \]
  is a lattice monomorphism satisfying
  \[
    \omega(|d_n^{-1}(\ell_n(A))|,\cH)=A
    \quad\forall A\in\cA.
  \]
\end{corollary}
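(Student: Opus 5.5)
The map in Corollary~\ref{cor:base-generated-neighborhoods} factors as
\[
  \sInvsetp(\Xi_n,\cF_n)\xrightarrow{\ |\cdot|_{\susp}\ }\tcB_n\cap\sANbhdR(\susp X,\Phi)\xrightarrow{\ \rho_n\ }\scR_{\cX_n}(X),
\]
and we treat each factor separately. By~\eqref{eq:invset-neighborhood-map}, the evaluation $\cU\mapsto|\cU|_{\susp}$ is a bounded lattice monomorphism into $\sANbhdR(\susp X,f_\tau)$. Its image also lies in $\tcB_n$ by definition. Proposition~\ref{prop:fixed-time-neighborhoods} identifies $\sANbhd(\susp X,f_\tau)$ with $\sANbhd(\susp X,\Phi)$, so each $|\cU|_{\susp}$ lies in $\sANbhdR(\susp X,\Phi)$.

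By Proposition~\ref{prop:finite-grid-preimage}, $\rho_n=\rho|_{\tcB_n}$ is a bounded lattice homomorphism on all of $\tcB_n$, and~\eqref{eq:finite-grid-preimage} gives $\rho_n(|\cU|_{\susp})=|d_n^{-1}(\cU)|$. This is the key point. Meet-preservation, which fails for $\rho$ on $\sANbhdR(\susp X,\Phi)$ in general (Example~\ref{ex:ANbhd-preimage-meet}), holds here because we stay inside the Boolean algebra $\tcB_n$. The meet $|\cU|_{\susp}\wedge|\cV|_{\susp}$ in $\scR(\susp X)$ equals $|\cU\cap\cV|_{\susp}$, and $d_n^{-1}$ commutes with intersections. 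Lemma~\ref{lem:regularized-preimage-omega} then shows that $\rho(|\cU|_{\susp})\in\sANbhdR(X,\cH)$, and the formula shows it lies in $\scR_{\cX_n}(X)$. Composing the two factors gives a bounded lattice homomorphism $\cU\mapsto|d_n^{-1}(\cU)|$. Composing further with the epimorphism $\omega(\cdot,\cH)$ of Proposition~\ref{prop:Att-lattice} gives a bounded lattice homomorphism into $\sAtt(\cH)$.

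For the second part, apply the map to $\ell_n(A)$. Theorem~\ref{thm:FiniteSublattice} gives $\omega(\rho(|\ell_n(A)|_{\susp}),\cH)=A$, and~\eqref{eq:finite-grid-preimage} identifies $\rho(|\ell_n(A)|_{\susp})$ with $|d_n^{-1}(\ell_n(A))|$. Hence
\[
  \omega(|d_n^{-1}(\ell_n(A))|,\cH)=A\quad\forall A\in\cA.
\]
The composition $A\mapsto|d_n^{-1}(\ell_n(A))|$ is a lattice homomorphism because it is a composite of lattice homomorphisms. It is injective because $\omega(\cdot,\cH)$ recovers $A$ from its image. Bounds are inherited as far as $\ell_n$ preserves them.

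The main obstacle is the meet-preservation step. We have to make sure that the lattice structure on $\sInvsetp(\Xi_n,\cF_n)$, with meet given by $\cap$, is carried through evaluation to the meet $\wedge$ in $\scR(\susp X)$, and only then apply $\rho_n$. This relies on $|\cU\cap\cV|_{\susp}=|\cU|_{\susp}\wedge|\cV|_{\susp}$ for grids, together with the fact that $\rho_n$ is a homomorphism on the whole Boolean algebra $\tcB_n$, not merely on its attracting neighborhoods.
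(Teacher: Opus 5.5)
Your proposal is correct and follows the paper's proof essentially step for step. You write the map as $\rho_n$ composed with evaluation and invoke Proposition~\ref{prop:finite-grid-preimage}. You obtain membership in $\sANbhdR(X,\cH)$ from \eqref{eq:invset-neighborhood-map}, Proposition~\ref{prop:fixed-time-neighborhoods} and Lemma~\ref{lem:regularized-preimage-omega}, and injectivity from Theorem~\ref{thm:FiniteSublattice}. Your hedge on bounds is unnecessary: $\ell_n$ is a monomorphism, and under the paper's convention monomorphisms are bounded.
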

\begin{proof}
  The first map is a bounded lattice homomorphism, since $|d_n^{-1}(\cU)|=\rho_n(|\cU|_{\susp})$ by Proposition~\ref{prop:finite-grid-preimage} and both evaluation and $\rho_n$ preserve joins, meets, and bounds. Since \eqref{eq:invset-neighborhood-map} and Proposition~\ref{prop:fixed-time-neighborhoods} imply $|\cU|_{\susp}\in\sANbhdR(\susp X,\Phi)$ whenever $\cU$ is forward invariant, Lemma~\ref{lem:regularized-preimage-omega} implies that $|d_n^{-1}(\cU)|\in\sANbhdR(X,\cH)$. The map $\cU\mapsto\omega(|d_n^{-1}(\cU)|,\cH)$ is therefore a bounded lattice homomorphism into $\sAtt(\cH)$, since $\omega(\cdot,\cH)$ is one by Proposition~\ref{prop:Att-lattice}.

  Since $\ell_n$ preserves joins and meets, so does $A\mapsto|d_n^{-1}(\ell_n(A))|$. This map is injective because Theorem~\ref{thm:FiniteSublattice} implies that
  \[
    \omega(|d_n^{-1}(\ell_n(A))|,\cH)=\omega(\rho(|\ell_n(A)|_{\susp}),\cH)=A,
    \quad\forall A\in\cA.
  \]
\end{proof}

\subsection{The multivalued map on the base grid}

Let $(\cF_n:\Xi_n\mvmap\Xi_n)$ be a convergent cofiltration of outer approximations of $f_\tau$. Recall that $d_n:\cX_n\injection\Xi_n$ maps each base grid element $\mu$ to the suspension grid element $\tmu$ with $\incl^{-1}(|\tmu|_{\susp})=|\mu|$. On the base grid, define $\cF_n^X:\cX_n\mvmap\cX_n$ by $\mu'\in\cF_n^X(\mu)$ if there is a path in $\cF_n$ of the form
\[
  d_n(\mu)=\xi_0\to\cdots\to\xi_k=d_n(\mu'),
  \quad k\geq1,\quad
  \xi_i\notin d_n(\cX_n)\quad\forall\,i\in\setof{1,\ldots,k-1}.
\]

\begin{proposition}
  \label{prop:base-scc-lattice}
  The preimage map
  \[
    d_n^{-1}:\sInvsetp(\Xi_n,\cF_n)\surjection\sInvsetp(\cX_n,\cF_n^X)
  \]
  is a bounded lattice epimorphism, and evaluation is a lattice isomorphism from $\sInvsetp(\cX_n,\cF_n^X)$ onto
  \[
    \setdef{\rho_n(|\cU|_{\susp})}{\cU\in\sInvsetp(\Xi_n,\cF_n)}\subseteq\sANbhdR(X,\cH).
  \]
\end{proposition}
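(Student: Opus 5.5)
The proof splits into three steps: well-definedness of $d_n^{-1}$ into $\sInvsetp(\cX_n,\cF_n^X)$, surjectivity, and the evaluation statement. Lattice operations come for free, since $d_n^{-1}$ preserves unions, intersections, $\emptyset$ and $\Xi_n$, and $\sInvsetp$ of both maps carries $\cup,\cap$.

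For well-definedness, let $\cU\in\sInvsetp(\Xi_n,\cF_n)$, $\mu\in d_n^{-1}(\cU)$ and $\mu'\in\cF_n^X(\mu)$. The defining path $d_n(\mu)=\xi_0\to\cdots\to\xi_k=d_n(\mu')$ starts in $\cU$. Forward invariance of $\cU$ under $\cF_n$ propagates membership along the edges, so $\xi_k\in\cU$ and $\mu'\in d_n^{-1}(\cU)$.

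For surjectivity, given $\cV\in\sInvsetp(\cX_n,\cF_n^X)$, I take $\cU$ to be the set of all $\xi\in\Xi_n$ reachable from $d_n(\cV)$ by paths of length $\geq0$ in $\cF_n$. This set is forward invariant by construction. The key claim is $d_n^{-1}(\cU)=\cV$. Suppose $d_n(\mu')$ is reached from some $d_n(\mu)$ with $\mu\in\cV$, and choose the last index along the path at which the path visits $d_n(\cX_n)$ before its end. Writing that vertex as $d_n(\mu'')$, induction on the number of visits to $d_n(\cX_n)$ gives $\mu''\in\cV$. The remaining segment from $d_n(\mu'')$ to $d_n(\mu')$ has positive length and no intermediate vertex in $d_n(\cX_n)$, so $\mu'\in\cF_n^X(\mu'')\subseteq\cV$. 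Injectivity of $d_n$ from Proposition~\ref{prop:suspension-grid} is what makes these preimages well defined. This path-splitting induction is the only real content, and I expect it to be the main obstacle, although it is elementary.

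For the evaluation statement, Proposition~\ref{prop:finite-grid-preimage} gives $\rho_n(|\cU|_{\susp})=|d_n^{-1}(\cU)|$. By surjectivity, the set $\setdef{\rho_n(|\cU|_{\susp})}{\cU\in\sInvsetp(\Xi_n,\cF_n)}$ is therefore exactly $\setdef{|\cV|}{\cV\in\sInvsetp(\cX_n,\cF_n^X)}$. The evaluation $\cP(\cX_n)\to\scR_{\cX_n}(X)$ is a lattice isomorphism, so its restriction is injective and carries $\cup,\cap$ to $\vee,\wedge$. Hence it is a lattice isomorphism onto this image, and the image is a sublattice. Finally, the image lies in $\sANbhdR(X,\cH)$ by Corollary~\ref{cor:base-generated-neighborhoods}, which already shows $|d_n^{-1}(\cU)|\in\sANbhdR(X,\cH)$ for forward invariant $\cU$ via Lemma~\ref{lem:regularized-preimage-omega} and Proposition~\ref{prop:fixed-time-neighborhoods}.
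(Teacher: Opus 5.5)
Your proposal is correct and follows essentially the same argument as the paper. You use the same reachable set $\cU=\bigcup_{j\geq0}\cF_n^j(d_n(\cV))$ for surjectivity; your induction on the number of visits to $d_n(\cX_n)$ is only a recursive version of the paper's splitting of the path at all indices $j$ with $\xi_j\in d_n(\cX_n)$. The evaluation statement you derive from \eqref{eq:finite-grid-preimage} and Corollary~\ref{cor:base-generated-neighborhoods} exactly as the paper does.
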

\begin{proof}
  Since preimages preserve unions, intersections, and bounds, and each edge of $\cF_n^X$ represents a path in $\cF_n$, the map $d_n^{-1}$ is a bounded lattice homomorphism. To show surjectivity, let $\cV\in\sInvsetp(\cX_n,\cF_n^X)$ and define
  \[
    \cU:=\bigcup_{j\geq0}\cF_n^j(d_n(\cV))
  \]
  so that $\cF_n(\cU)\subseteq\cU$ and $\cV\subseteq d_n^{-1}(\cU)$. Conversely, for $\mu'\in d_n^{-1}(\cU)$, choose a path in $\cF_n$ with
  \[
    d_n(\mu_0)=\xi_0\to\cdots\to\xi_m=d_n(\mu'),
    \quad \mu_0\in\cV.
  \]
  Choose $0=j_0<\cdots<j_k=m$ such that
  \[
    \setof{j_0,\ldots,j_k}
    =\setdef{j\in\setof{0,\ldots,m}}{\xi_j\in d_n(\cX_n)},
  \]
  and define $\mu_i\in\cX_n$ by $d_n(\mu_i)=\xi_{j_i}$ for every $i\in\setof{0,\ldots,k}$. For every $i\in\setof{0,\ldots,k-1}$,
  \[
    \xi_j\notin d_n(\cX_n)
    \quad\forall\,j\in\setof{j_i+1,\ldots,j_{i+1}-1},
  \]
  so $\mu_{i+1}\in\cF_n^X(\mu_i)$ by definition of $\cF_n^X$. Since $\cV$ is forward invariant,
  \[
    \mu'=\mu_k\in(\cF_n^X)^k(\cV)\subseteq\cV.
  \]
  Thus $d_n^{-1}(\cU)=\cV$, proving surjectivity. The evaluation isomorphism follows from~\eqref{eq:finite-grid-preimage} and the grid property, and its image is in $\sANbhdR(X,\cH)$ by Corollary~\ref{cor:base-generated-neighborhoods}.
\end{proof}

For a finite sublattice $\cA\subseteq\sAtt(\cH)$, Theorem~\ref{thm:FiniteSublattice} and Proposition~\ref{prop:base-scc-lattice} therefore imply that, for every sufficiently large $n$, the composition
\[
  d_n^{-1}\circ\ell_n:\cA\longrightarrow\sInvsetp(\cX_n,\cF_n^X)
\]
is a bounded lattice homomorphism. By Corollary~\ref{cor:base-generated-neighborhoods}, its evaluations form a sublattice of $\sANbhdR(X,\cH)$ and satisfy
\[
  \omega\left(|d_n^{-1}(\ell_n(A))|,\cH\right)=A,
  \quad\forall A\in\cA.
\]
This identity proves injectivity and completes the proof of Theorem~\ref{thm:main}(iii).

The path decomposition above shows that $M\mapsto d_n^{-1}(M)$ is an order isomorphism
\[
  \setdef{M\in\MG(\cF_n)}{M\cap d_n(\cX_n)\neq\emptyset}\longrightarrow\MG(\cF_n^X).
\]
We can therefore compute these Morse sets and their reachability order on the base grid, then use the associated suspension pairs to compute their Conley indices.

\subsection{The suspension Conley index}

We refer to~\cite{Mischaikow:Mrozek} for a review of Conley index theory, and to~\cite{Rybakowski} and~\cite{Franks:Richeson} for the Conley index of semiflows and of maps, respectively. For an isolated invariant set $S$ of $\cH$, the suspension Conley index is
\[
  \SCH_*(S):=\CH_*(\susp(S),\Phi),
\]
with field $\F$ coefficients. Conversely, every isolated invariant set $\tM$ of $\Phi$ satisfies $\tM=\susp(S)$ for the isolated invariant set $S:=\incl^{-1}(\tM)$ of $\cH$~\cite{Rivas:Kalies}.

Let $\cF:\Xi\mvmap\Xi$ be an outer approximation of $f_\tau$ on a grid $\Xi$ of $\susp X$. For $\cV\subseteq\cU$ in $\sInvsetp(\Xi,\cF)$, define
\[
  \begin{aligned}
    \tN&:=|\cU|_{\susp},&\quad \tA&:=\omega(\tN,\Phi),\\
    \tL&:=|\cV|_{\susp},& \tB&:=\omega(\tL,\Phi).
  \end{aligned}
\]
By~\eqref{eq:invset-realization}, $\tN$ and $\tL$ are trapping regions for $f_\tau$, hence attracting neighborhoods for $\Phi$ with the same attractors and dual repellers~\cite{Kalies:Mischaikow:Vandervorst:2016}. The regular closed difference
\[
  \tK:=|\cU\setminus\cV|_{\susp}=\cl_{\susp X}(\tN\setminus\tL)
\]
is therefore isolating for both systems~\cite{Kalies:Mischaikow:Vandervorst:2022}, with
\[
  \tM:=\Inv(\tK,f_\tau)=\Inv(\tK,\Phi)=\tA\cap\tB^*,
\]
where $\tB^*:=\setdef{x\in\susp X}{\omega(x,\Phi)\cap\tB=\emptyset}$ is the dual repeller of $\tB$.

For $M\in\MG(\cF)$, the families
\[
  \cU_M:=\bigcup_{j\geq0}\cF^j(M),\quad \cV_M:=\cU_M\setminus M,
\]
are forward invariant, since an edge from $\cV_M$ into $M$ would contradict maximality of $M$. Consequently, $|M|_{\susp}$ isolates $\tM:=\Inv(|M|_{\susp},\Phi)$, and the nonempty $\tM$ form a Morse representation of $\omega(\susp X,\Phi)$ with order $\tM'\leq\tM$ if and only if $M'\leq M$~\cite{Kalies:Mischaikow:Vandervorst:2022}.

For $M_X\in\MG(\cF_n^X)$, let $M\in\MG(\cF_n)$ satisfy $d_n^{-1}(M)=M_X$. Evaluating the families above for $\cF=\cF_n$ defines the suspension pair
\[
  (\tN,\tL):=(|\cU_M|_{\susp},|\cV_M|_{\susp}),
\]
from which the following proposition computes the index.

\begin{proposition}
  \label{prop:grid-conley-index}
  Let $\tN=|\cU|_{\susp}$ and $\tL=|\cV|_{\susp}$ for $\cV\subseteq\cU$ in $\sInvsetp(\Xi,\cF)$. The pair $(\htN,\htL)$ defined by
  \[
    \htN:=\Phi([0,\tau],\tN),\quad \htL:=\Phi([0,\tau],\tL)
  \]
  is an index pair for $\tS:=\Inv(|\cU\setminus\cV|_{\susp},\Phi)$ under $\Phi$, and for all sufficiently large $k$,
  \[
    \im (f_\tau)_*^k\cong\CH_*(\tS,\Phi)\cong\SCH_*(\incl^{-1}(\tS)),\quad (f_\tau)_*|_{\im (f_\tau)_*^k}=\id. 
  \]
\end{proposition}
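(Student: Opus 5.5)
The plan has three parts. First, show that $(\htN,\htL)$ is an index pair for $\tS$ under $\Phi$, directly from the standard definition for semiflows. Second, identify the $f_\tau$-index of the grid pair with the flow index. Third, transfer to the hybrid index using the suspension correspondence.

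\emph{Index pair.} We know $f_\tau(\tN)\subseteq\Int_{\susp X}\tN$ and $f_\tau(\tL)\subseteq\Int_{\susp X}\tL$ by \eqref{eq:invset-realization}. First I show that $\htN$ and $\htL$ are compact and forward invariant. Compactness holds because each is the continuous image of $[0,\tau]\times$compact. For forward invariance, write $t\geq0$ as $t=k\tau+s$. Then $\Phi(t,\Phi(u,x))$ with $u\in[0,\tau]$ equals $\Phi(u',f_\tau^{j}(x))$ for some $u'\in[0,\tau]$ and $j\geq0$, and $f_\tau^j(x)\in\tN$. So $\htL$ is positively invariant relative to $\htN$, and no orbit can exit $\htN$.

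Next I show $\htN\setminus\htL$ is a neighborhood of $\tS$ and $\Inv(\cl(\htN\setminus\htL),\Phi)=\tS$. Here I use $\tS=\tA\cap\tB^*$, which is $\Phi$-invariant and contained in $\Int\tK$, where $\tK=|\cU\setminus\cV|_{\susp}$. Since $\tS\cap\htL=\emptyset$ ($\htL\subseteq\omega$-basin of $\tB$ while $\tS\subseteq\tB^*$) and $\tS\subseteq\Int\tN\subseteq\Int\htN$, and $\htL$ is closed, the neighborhood property holds. For invariance, any full orbit in $\htN\setminus\htL$ lies in $\tA$ because $\htN\subseteq$ the basin, and it avoids $\htL\supseteq\tL$. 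So at integer multiples of $\tau$ it stays in $\tN\setminus\tL\subseteq\tK$, hence lies in $\Inv(\tK,f_\tau)=\tM=\tS$.

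Exit: a point of $\htN$ whose orbit leaves $\htN\setminus\htL$ must enter $\htL$, because $\htN$ is forward invariant. I will state this as the usual Conley exit condition and verify closedness of the exit sets via compactness.

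\emph{$f_\tau$-index.} The pair $(\tN,\tL)$ is an index pair for $\tS$ under the map $f_\tau$ in the sense of Franks--Richeson: $f_\tau(\tN\setminus\tL)$-type conditions follow from forward invariance of both sets under $f_\tau$ and the isolation $\Inv(\tK,f_\tau)=\tS$. The Conley index of the map is the shift-equivalence class of the induced map $(f_\tau)_*$ on $H_*(\tN,\tL;\F)$. Over a field with finite-dimensional homology, the Leray reduction is the stable image $\im(f_\tau)_*^k$ for large $k$ together with the restricted automorphism.

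To compare with the flow, I use the deformation $\Phi(s\tau,\cdot)$, $s\in[0,1]$. Inclusion $(\tN,\tL)\hookrightarrow(\htN,\htL)$ composed with $f_\tau:(\htN,\htL)\to(\tN,\tL)$ yields maps homotopic to the identities on both pairs through $\Phi(s\tau,\cdot)$, since $\htN$ and $\htL$ are forward invariant. Hence $(f_\tau)_*$ on $H_*(\tN,\tL)$ factors through $H_*(\htN,\htL)=\CH_*(\tS,\Phi)$ and is conjugate to an isomorphism there. This should give $\im(f_\tau)_*^k\cong\CH_*(\tS,\Phi)$ with $(f_\tau)_*$ acting as the identity, because $f_\tau$ is homotopic to the identity on $(\htN,\htL)$ via the flow.

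The main obstacle is this step: the homotopy $\Phi(s\tau,\cdot)$ on $(\tN,\tL)$ does not stay inside $(\tN,\tL)$. I would therefore run the argument on $(\htN,\htL)$. There $\id\simeq f_\tau$ as maps of pairs, which follows from forward invariance. I would then show that the inclusion $j:(\tN,\tL)\to(\htN,\htL)$ and $f_\tau:(\htN,\htL)\to(\tN,\tL)$ satisfy $j\circ f_\tau\simeq\id$ and $f_\tau\circ j=f_\tau|_{(\tN,\tL)}$. This yields the shift equivalence and hence the stable-image statement.

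\emph{Hybrid index.} Finally, $\tS$ is an isolated invariant set of $\Phi$, so by \cite{Rivas:Kalies} $\tS=\susp(\incl^{-1}(\tS))$ with $\incl^{-1}(\tS)$ isolated for $\cH$. By definition $\SCH_*(\incl^{-1}(\tS))=\CH_*(\tS,\Phi)$.
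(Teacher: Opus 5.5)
Your index-pair argument and the final identification with $\SCH_*$ match the paper. The homological step, however, has a genuine gap: $f_\tau$ does not in general map $(\htN,\htL)$ into $(\tN,\tL)$.

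For $x\in\tN$ and $u\in[0,\tau]$ we have $f_\tau(\Phi(u,x))=\Phi(u,f_\tau(x))$. Since $\tN$ is only forward invariant under $f_\tau$, not under $\Phi$, nothing keeps these intermediate-time points in $\tN$. Concretely, take the semiflow $\dot x=-x$ on $[-10,10]$ with empty guard and $e^{-\tau}=1/10$. Let $\tL=\emptyset$ and $\tN=D_1\cup D_2\cup D_3$ with $D_1=[6,8]$, $D_2=[0.5,0.9]$, $D_3=[-0.1,0.1]$. This $\tN$ is the evaluation of a forward invariant family under an outer approximation with $D_1\mapsto D_2\mapsto D_3\mapsto D_3$. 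Then $\htN=[-0.1,8]$, but $f_\tau(\htN)=[-0.01,0.8]\not\subseteq\tN$.

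Your relations $\alpha\beta=\id$ and $\beta\alpha=T$ (with a single application of $f_\tau$) would force $T^2=\beta(\alpha\beta)\alpha=T$ on $H_*(\tN,\tL;\F)$. Here, though, $T$ acts on $H_0(\tN;\F)\cong\F^3$ by $[D_1]\mapsto[D_2]\mapsto[D_3]\mapsto[D_3]$, so $T^2\neq T$. This is exactly why the statement only holds for sufficiently large $k$.

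The missing idea is to replace $f_\tau$ by a large iterate, as the paper does.
\begin{itemize}
\item Since $f_\tau(\tN)\subseteq\tN$, the compact sets $f_\tau^j(\tN)$ decrease to $\omega(\tN,f_\tau)=\tA$.
\item The set $\tA$ is $\Phi$-invariant with $\Phi([0,\tau],\tA)=\tA\subseteq\Int\tN$.
\item By compactness and continuity, $f_\tau^q(\htN)=\Phi([0,\tau],f_\tau^q(\tN))\subseteq\tN$ for some $q$. Likewise $f_\tau^q(\htL)\subseteq\tL$, using $\tB$; enlarging $q$ is harmless because $f_\tau(\tN)\subseteq\tN$.
\item With $\alpha=i_*$ and $\beta=(f_\tau^q)_*$, you get $\alpha\beta=\id$, since $(f_\tau)_*=\id$ on $H_*(\htN,\htL;\F)$ via $\Phi(s\tau,\cdot)$. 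You also get $\beta\alpha=T^q$ and $T\beta=\beta$.
\item Hence $\im T^k=\im T^q=\im\beta\cong H_*(\htN,\htL;\F)$ and $T|_{\im T^k}=\id$ for all $k\geq q$.
\end{itemize}

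Two smaller points in your index-pair step. First, a full orbit in $\htN$ lies in $\tA$ because it lies in $\omega(\htN,\Phi)=\tA$, not because $\htN$ is in the basin. Second, you must treat full orbits in $\cl(\htN\setminus\htL)$, not just in $\htN\setminus\htL$. Such orbits still avoid $\tL$, because $f_\tau(\tL)\subseteq\Int\tL\subseteq\Int\htL$, so your sampling argument goes through.
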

\begin{proof}

Write $f:=f_\tau$, and set $\tA:=\omega(\tN,\Phi)$ and $\tB:=\omega(\tL,\Phi)$. Since $\cU,\cV\in\sInvsetp(\Xi,\cF)$, \eqref{eq:invset-realization} gives $f(\tN)\subseteq\Int_{\susp X}\tN$ and $f(\tL)\subseteq\Int_{\susp X}\tL$. Thus $\tN$ and $\tL$ are trapping regions for $f$, and Proposition~\ref{prop:fixed-time-neighborhoods} implies that they are attracting neighborhoods for $\Phi$ with the same attractors. Also, we have $\tS=\tA\cap\tB^*$. We first show that $(\htN,\htL)$ is an index pair for $\tS$. Since $f(\tN)\subseteq\tN$,
\[
  \Phi([0,\tau],\htN)=\htN\cup\Phi([0,\tau],f(\tN))\subseteq\htN,
\]
and similarly $\Phi([0,\tau],\htL)\subseteq\htL$. Therefore, by the semiflow property, $\htN$ and $\htL$ are positively invariant. Consequently, $\htL$ is positively invariant in $\htN$, and the exit
condition is trivial.

We claim that $\Inv(\cl_{\susp X}(\htN\setminus\htL),\Phi)=\tA\cap\tB^*=\tS$. So, if $x$ belongs to the left-hand side, there exists a full trajectory $\gamma$ through $x$ in $\cl(\htN\setminus\htL)$. Since $x=\Phi(j\tau,\gamma(-j\tau))$ with $\gamma(-j\tau)\in\htN$ for all $j\geq1$, we have $x\in\omega(\htN,\Phi)=\tA$. Moreover, $\omega(x,\Phi)\cap\tB=\emptyset$, since $\tB\subseteq\Int\htL$ and $\cl_{\susp X}(\htN\setminus\htL)\cap\Int\htL=\emptyset$; hence $x\in\tB^*$.
Conversely, if $x\in\tA\cap\tB^*$, then $x\in\Int\tN\subseteq\htN$, while $x\notin\htL$, since otherwise positive invariance of $\htL$ would imply $\omega(x,\Phi)\subseteq\omega(\htL,\Phi)=\tB$. Thus $\tS\subseteq\htN\setminus\htL$, and invariance of $\tS$ gives the reverse inclusion. Finally, $\tS\subseteq\Int(\htN\setminus\htL)$, so $(\htN,\htL)$ is an index pair for $\tS$.

Set $T:=f_*:H_*(\tN,\tL;\F)\to H_*(\tN,\tL;\F)$. Since $\htN$ and $\htL$ are positively invariant, $H(r,x):=\Phi(r\tau,x)$ is a homotopy of pairs on $(\htN,\htL)$ from $\id$ to $f$. Hence $f_*=\id$ on $H_*(\htN,\htL;\F)$. 

Here, we have $f^j(\tN)$ decreasing to $\omega(\tN,f)=\tA$, and similarly $f^j(\tL)$ decreasing to $\tB$. Since $\Phi([0,\tau],\tA)=\tA\subseteq\Int\tN$ and $\Phi([0,\tau],\tB)=\tB\subseteq\Int\tL$, compactness and continuity give, for some $q\geq1$,
\[
  f^q(\htN)=\Phi([0,\tau],f^q(\tN))\subseteq\tN,\qquad f^q(\htL)=\Phi([0,\tau],f^q(\tL))\subseteq\tL.
\]
Thus the inclusion $i:(\tN,\tL)\hookrightarrow(\htN,\htL)$ and $f^q:(\htN,\htL)\to(\tN,\tL)$ induce maps $\alpha:=i_*$ and $\beta:=(f^q)_*$ satisfying $\beta\alpha=T^q$ and $\alpha\beta=\id$. Hence $\beta$ is injective and $\im T^q=\im\beta\cong H_*(\htN,\htL;\F)$. Moreover, $T\beta=\beta$, since $f_*=\id$ on $H_*(\htN,\htL;\F)$. Therefore $T|_{\im T^q}=\id$ and, for every $k\geq q$, $\im T^k=\im T^q$ and $T|_{\im T^k}=\id$.

Finally, since $(\htN,\htL)$ is an index pair for $\tS$,
\[
  \im(f_\tau)_*^k\cong H_*(\htN,\htL;\F)\cong\CH_*(\tS,\Phi)\cong\SCH_*(\incl^{-1}(\tS))
\]
for every $k\geq q$, where the last isomorphism follows from $\tS=\susp(\incl^{-1}(\tS))$ and the definition of the suspension Conley index. Also $(f_\tau)_*|_{\im(f_\tau)_*^k}=\id$, completing the proof.
\end{proof}
These results suggest a reverse interpretation of our computational framework. Instead of saying, start with a hybrid system and construct its time-$\tau$ map's outer approximation, one can start with a multivalued map $\cF$ and ask which hybrid systems have a time-$\tau$ map that is a selector of $\cF$. More specifically, after fixing our state space and guard, a small perturbation of the flow and reset map may give a different hybrid system, but the dynamical structures of its suspension are represented by the same outer approximation $\cF$.
This suggests a notion of robustness using the combinatorial dynamics framework. That is, we are not describing just one unique model, but all hybrid systems represented by a common multivalued enclosure. A precise formulation of such hybrid suspension selectors is left for future work.

\section{Examples}
\label{sec:example}

This section illustrates the constructions above on four hybrid systems: the bouncing ball is a toy example that checks that the computation recovers the known homology, the rimless wheel is a mechanical model, the spiking neuron is an application to neuroscience, and the impacting oscillator is a mathematical example with several attractors. For each system, we compute a multivalued map on a suspension grid from samples of $f_\tau$ and its Conley--Morse graph, that is, its Morse graph with each Morse set labeled by its Conley index. In the first three examples, the computed Conley--Morse graph coincides with the known one, and for the impacting oscillator we describe the dynamics it suggests. The computations follow~\cite{Arai:Kalies:Kokubu:Mischaikow:Oka:Pilarczyk}, and we use the computational topology framework of CMGDB~\cite{CMGDB} and CHomP~\cite{pychomp}, both based on cubical homology~\cite{Kaczynski:Mischaikow:Mrozek}. We highlight that the multivalued maps are constructed heuristically via sampling, and hence they are not rigorous outer approximations.

For each example, we choose a rectangular region $R\subseteq X$ with $r(G\cap R)\subseteq R$, and restrict ourselves to
\[
  \tD:=\susp(R)=\incl(R)\cup\pi\left((G\cap R)\times[0,1]\right),
\]
in order to describe the maximal invariant set of $f_\tau$ in $\tD$. The base grid $\cX_n$ subdivides $R$ into congruent rectangles, so for every $\xi\in\Xi_n$, the set $|\xi|_{\susp}$ is a finite union of sets $\incl(|\mu|)$ with $\mu\in\cX_n$ and $\pi(J\times\Ink)$ with $J\subseteq G\cap R$ an interval. For each sample, we compute $f_\tau$ via numerical integration and locate the impacts by root finding. This way, the multivalued map $\cF:\Xi_n\mvmap\Xi_n$ maps $\xi$ to $\xi'$ if $f_\tau(p)\in |\xi'|$ for some $p \in |\xi|$. We choose $\tau>0$ sufficiently small to minimize integration errors, but large enough so that $f_\tau$ is not close to the identity at the resolution of the grid.

Recall that the Morse sets $M(0),M(1),\dots$ of $\cF$ are its recurrent strongly connected components, and the Morse graph $\MG(\cF)$ orders them by reachability. For each Morse set $M$, we compute its Conley index from $\cF$ and report the dimensions of this index in degrees $0$, $1$, and $2$ as a triple. For any hybrid system for which $\cF$ is an outer approximation of $f_\tau$, this index is the Conley index $\CH_*(\tM,\Phi)=\SCH_*(\incl^{-1}(\tM))$ of $\tM:=\Inv(|M|_{\susp},\Phi)$. A nontrivial index implies that $\tM$ and the invariant set $\incl^{-1}(\tM)$ of $\cH$ are nonempty, some indices imply that $\incl^{-1}(\tM)$ contains an equilibrium, and under further hypotheses, others imply that it contains a periodic orbit~\cite{Rivas:Kalies}. The trivial index does not mean that $\tM$ is empty. For each example, we show the sets associated to the Morse sets with nontrivial index and the computed Conley--Morse graph. If some Morse sets have trivial index, the figures show both the full Conley--Morse graph, with these Morse sets in gray, and the restricted Conley--Morse graph, which orders the Morse sets with nontrivial index by reachability in $\MG(\cF)$.

\subsection{Bouncing ball}

We use the bouncing ball with height $h$, velocity $v$, flow $\varphi$ generated by $(\hdot,\vdot)=(v,-g)$, guard $G=\setdef{(0,v)}{v\leq0}$, and reset $r(0,v)=(0,-cv)$, and we take $g=9.81$ and $c=0.8$. 
Since $r(0,0)=(0,0)$, the origin corresponds to the suspended periodic orbit $\widetilde{Z}:=\susp(\setof{(0,0)})$ of $\Phi$, which has period $1$. Recall that $\SCH_*(\setof{(0,0)})\cong H_*(S^1)$~\cite{Rivas:Kalies}.

Figure~\ref{fig:bouncing-ball} shows the computation for $\tau=0.5$. The Morse graph has a single Morse set, with Conley index $(1,1,0)$. In $R$, this Morse set lies along $h=0$ near the origin.

\begin{figure}[!htbp]
  \centering
  \captionsetup[sub]{justification=centering}
  \begin{subfigure}[b]{0.36\linewidth}
    \centering
    \parbox[b][5.2cm][c]{\linewidth}{\centering\includegraphics[width=\linewidth,height=5.2cm,keepaspectratio]{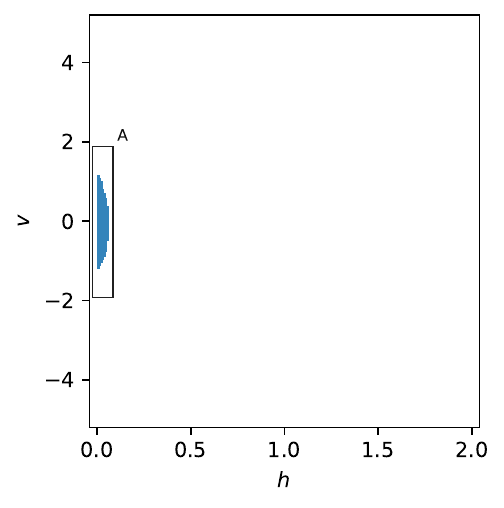}}
    \caption{Morse sets in $R$.}
  \end{subfigure}\hfill
  \begin{subfigure}[b]{0.3\linewidth}
    \centering
    \parbox[b][5.2cm][c]{\linewidth}{\centering\includegraphics[width=\linewidth,height=4.2cm,keepaspectratio]{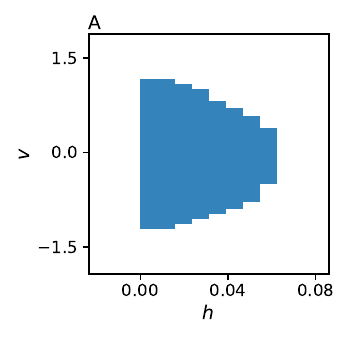}}
    \caption{$M(0)$ in A.}
  \end{subfigure}\hfill
  \begin{subfigure}[b]{0.26\linewidth}
    \centering
    \parbox[b][5.2cm][c]{\linewidth}{\centering\includegraphics[width=\linewidth,height=1.8cm,keepaspectratio]{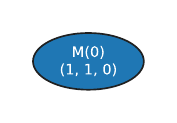}}
    \caption{Conley--Morse graph.}
  \end{subfigure}
  \caption{Bouncing ball at $\tau=0.5$, on a $2^8\times2^8$ grid of $R=[0,2]\times[-5,5]$, with $2^6$ phase intervals.}
  \label{fig:bouncing-ball}
\end{figure}
\FloatBarrier

\subsection{Rimless wheel}

We use the rimless wheel~\cite{McGeer}, a planar wheel with spokes rolling down an inclined plane, in the form and with the parameters of~\cite{Shia:Vasudevan:Bajcsy:Tedrake}. Its state consists of the angle $\theta$ between the vertical and the spoke in contact with the ground and the angular velocity $\thetadot$, the flow $\varphi$ is generated by $\ddot\theta=\sin\theta$, the guard is $G=\setdef{(\alpha+\gamma,\thetadot)}{\thetadot\geq0}$, and the reset is $r(\theta,\thetadot)=(2\gamma-\theta,\cos(2\alpha)\thetadot)$, where the slope is $\gamma=0.2$ and the angle between two spokes is $2\alpha=0.8$. 

Let $R:=[-0.2,0.6]\times[-0.5,1]$. Let $X$ be the closure of the set of states reached from $R$. Note that $\cH$ satisfies TGC on $X$ and recall that the periodic orbit has index $H_*(S^1)$, and that the saddle, a hyperbolic equilibrium with one unstable direction, has index $\SCH_k(\setof{(0,0)})\cong\F$ for $k=1$ and $\SCH_k(\setof{(0,0)})=0$ otherwise~\cite{Rivas:Kalies}.

Figures~\ref{fig:rimless-wheel} and~\ref{fig:rimless-wheel-full} compare the computations for $\tau=0.5$ on $2^7\times2^7$ and $2^{11}\times2^{11}$ base grids, with $2^9$ phase intervals in both. At each resolution, $M(0)$ has Conley index $(1,1,0)$, and the Morse set around the saddle, $M(12)$ on the coarse grid and $M(9)$ on the fine grid, has Conley index $(0,1,0)$. Both restricted Conley--Morse graphs have an edge from the saddle to the stable periodic orbit. The full graphs have $20$ and $17$ Morse sets, respectively, so the coarse computation already distinguishes these invariant sets and their order, although the Morse sets with trivial index change with resolution.

\begin{figure}[!htbp]
  \centering
  \captionsetup[sub]{justification=centering}
  \begin{subfigure}[b]{0.48\linewidth}
    \centering
    \parbox[b][4.5cm][c]{\linewidth}{\centering\includegraphics[width=\linewidth,height=4.5cm,keepaspectratio]{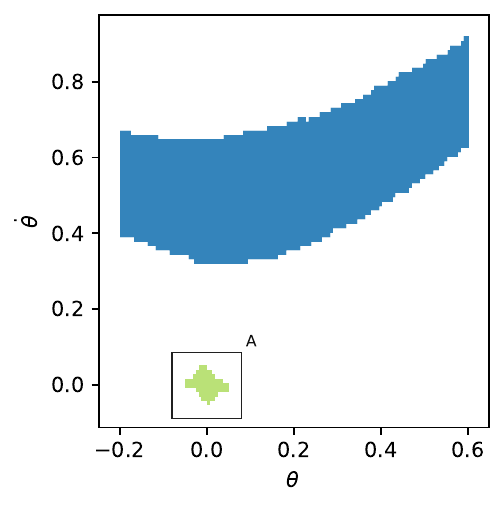}}
    \caption{$2^7\times2^7$: $M(0)$ and $M(12)$.}
  \end{subfigure}\hfill
  \begin{subfigure}[b]{0.48\linewidth}
    \centering
    \parbox[b][4.5cm][c]{\linewidth}{\centering\includegraphics[width=\linewidth,height=4.5cm,keepaspectratio]{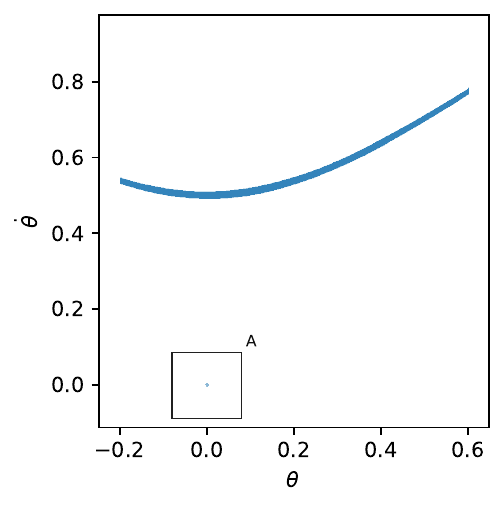}}
    \caption{$2^{11}\times2^{11}$: $M(0)$ and $M(9)$.}
  \end{subfigure}
  \par\medskip
  \begin{subfigure}[b]{0.48\linewidth}
    \centering
    \parbox[b][3.2cm][c]{\linewidth}{\centering\includegraphics[width=\linewidth,height=3.2cm,keepaspectratio]{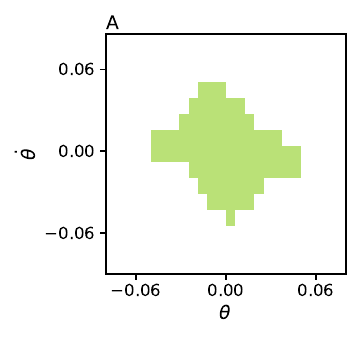}}
    \caption{$M(12)$ in A.}
  \end{subfigure}\hfill
  \begin{subfigure}[b]{0.48\linewidth}
    \centering
    \parbox[b][3.2cm][c]{\linewidth}{\centering\includegraphics[width=\linewidth,height=3.2cm,keepaspectratio]{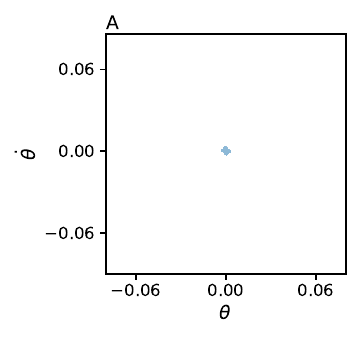}}
    \caption{$M(9)$ in A.}
  \end{subfigure}
  \par\medskip
  \begin{subfigure}[b]{0.48\linewidth}
    \centering
    \parbox[b][2.7cm][c]{\linewidth}{\centering\includegraphics[width=\linewidth,height=2.7cm,keepaspectratio]{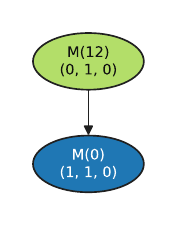}}
    \caption{Coarse restricted Conley--Morse graph.}
  \end{subfigure}\hfill
  \begin{subfigure}[b]{0.48\linewidth}
    \centering
    \parbox[b][2.7cm][c]{\linewidth}{\centering\includegraphics[width=\linewidth,height=2.7cm,keepaspectratio]{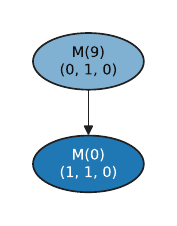}}
    \caption{Fine restricted Conley--Morse graph.}
  \end{subfigure}
  \caption{Rimless wheel at $\tau=0.5$ on $2^7\times2^7$ (left) and $2^{11}\times2^{11}$ (right) grids of $R=[-0.2,0.6]\times[-0.5,1]$, with $2^9$ phase intervals in both computations. Corresponding axes have the same limits.}
  \label{fig:rimless-wheel}
\end{figure}

\begin{figure}[!htbp]
  \centering
  \captionsetup[sub]{justification=centering}
  \begin{subfigure}[b]{0.48\linewidth}
    \centering
    \parbox[b][14cm][c]{\linewidth}{\centering\includegraphics[width=\linewidth,height=14cm,keepaspectratio]{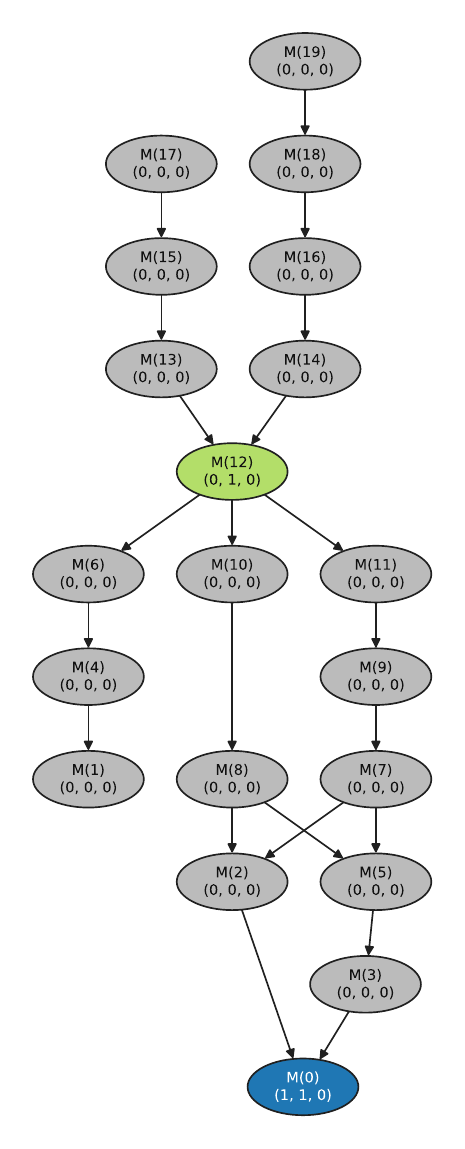}}
    \caption{$2^7\times2^7$: $20$ Morse sets.}
  \end{subfigure}\hfill
  \begin{subfigure}[b]{0.48\linewidth}
    \centering
    \parbox[b][14cm][c]{\linewidth}{\centering\includegraphics[width=\linewidth,height=14cm,keepaspectratio]{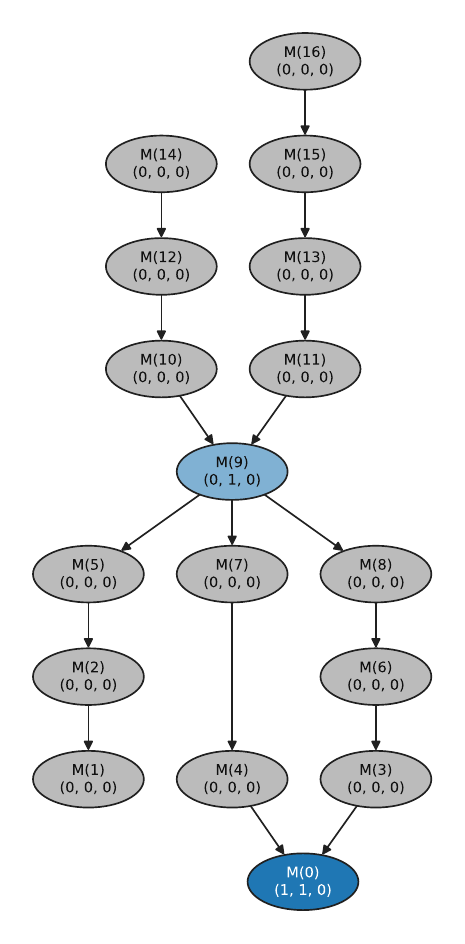}}
    \caption{$2^{11}\times2^{11}$: $17$ Morse sets.}
  \end{subfigure}
  \caption{Full Conley--Morse graphs for the two resolutions in Figure~\ref{fig:rimless-wheel}. Morse sets with trivial index are gray.}
  \label{fig:rimless-wheel-full}
\end{figure}
\FloatBarrier

\subsection{Spiking neuron}

We use the regular-spiking neuron of~\cite{Izhikevich:2007}, a quadratic integrate-and-fire model, with membrane potential $v$, recovery variable $u$, flow $\varphi$ generated by
\[
  \begin{aligned}
    100\,\vdot&=0.7(v+60)(v+40)-u+70,\\
    \udot&=0.03\left(-2(v+60)-u\right),
  \end{aligned}
\]
guard $G=\setdef{(35,u)}{-300\leq u\leq160}$, and reset $r(35,u)=(-50,u+100)$. 
Recall that the L-shaped set
\[
  X:=([-80,-40]\times[-300,600])\cup([-40,35]\times[-300,160])
\]
is compact and forward invariant, that $r(G)\subseteq\Int X$, that $\cH$ has a periodic orbit, and that the maximal invariant set $A$ of $\cH$ in $X$ has index $\SCH_*(A)\cong H_*(S^1)$~\cite{Rivas:Kalies}. 

The base grid consists of the elements of a uniform grid of $[-120,200]\times[-400,880]$ contained in $X$, whose union is $X$. Figure~\ref{fig:spiking-neuron} shows the computation for $\tau=5$. The Morse graph has a single Morse set, with Conley index $(1,1,0)$. In $X$, this Morse set is a band that follows the periodic orbit from its reset point on the line $v=-50$ to the guard.

\begin{figure}[!htbp]
  \centering
  \captionsetup[sub]{justification=centering}
  \begin{subfigure}[b]{0.5\linewidth}
    \centering
    \parbox[b][6cm][c]{\linewidth}{\centering\includegraphics[width=\linewidth,height=6cm,keepaspectratio]{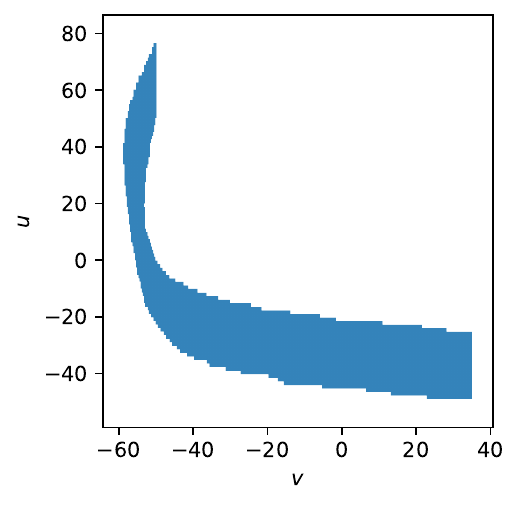}}
    \caption{Morse set in $X$.}
  \end{subfigure}\hfill
  \begin{subfigure}[b]{0.5\linewidth}
    \centering
    \parbox[b][6cm][c]{\linewidth}{\centering\includegraphics[width=\linewidth,height=2cm,keepaspectratio]{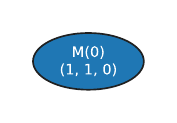}}
    \caption{Conley--Morse graph.}
  \end{subfigure}
  \caption{Spiking neuron at $\tau=5$, on a $2^{10}\times2^{10}$ grid of $[-120,200]\times[-400,880]$.}
  \label{fig:spiking-neuron}
\end{figure}
\FloatBarrier

\subsection{Impacting oscillator}
\label{subsec:impacting-oscillator}

A particle moves in the double-well potential $V(x):=x^4/4-x^2/2$ with the damping coefficient $\varepsilon(x^2-\beta)$. The particle hits a rigid stop at $x=w$ inside the right well and bounces back with coefficient of restitution $c$. With velocity $v$ and $x\leq w$, the flow $\varphi$ is generated by
\[
  \dot x=v,\quad\vdot=x-x^3+\varepsilon(\beta-x^2)v,
\]
the guard is $G=\setdef{(w,v)}{v\geq0}$, and the reset is $r(w,v)=(w,-cv)$, where $\varepsilon=1$, $\beta=0.76$, $w=0.8$, and $c=0.7$. Let $R:=[-1.95,0.8]\times[-2.35,1.95]$, and let $X$ be the closure of the set of states reached from $R$. 

\begin{figure}[!htbp]
  \centering
  \captionsetup[sub]{justification=centering}
  \begin{subfigure}[b]{0.4\linewidth}
    \centering
    \parbox[b][5.6cm][c]{\linewidth}{\centering\includegraphics[width=\linewidth,height=5.6cm,keepaspectratio]{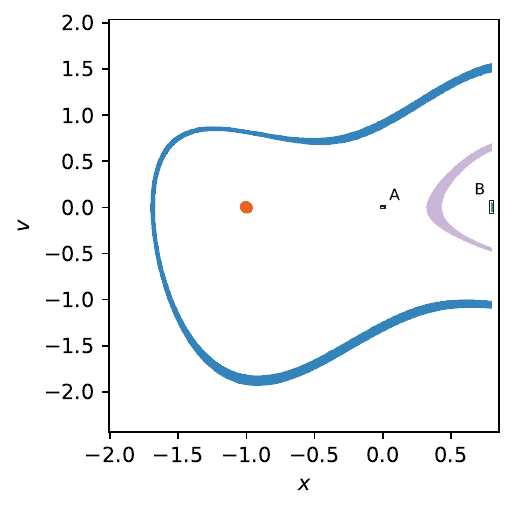}}
    \caption{Morse sets in $R$.}
  \end{subfigure}\hfill
  \begin{subfigure}[b]{0.27\linewidth}
    \centering
    \parbox[b][5.6cm][c]{\linewidth}{\centering\includegraphics[width=\linewidth,height=4.2cm,keepaspectratio]{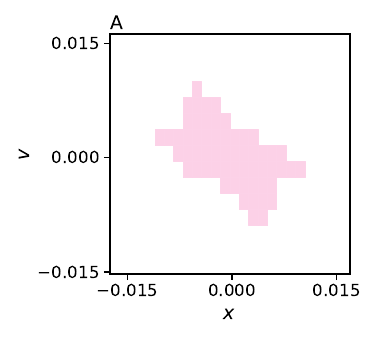}}
    \caption{$M(11)$ in region A.}
  \end{subfigure}\hfill
  \begin{subfigure}[b]{0.27\linewidth}
    \centering
    \parbox[b][5.6cm][c]{\linewidth}{\centering\includegraphics[width=\linewidth,height=4.2cm,keepaspectratio]{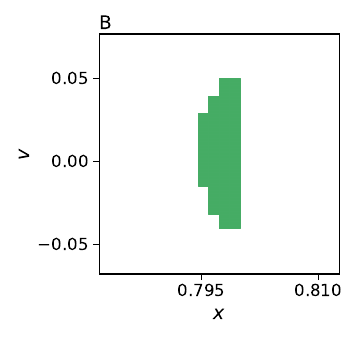}}
    \caption{$M(2)$ in region B.}
  \end{subfigure}
  \par\medskip
  \begin{subfigure}[b]{0.44\linewidth}
    \centering
    \parbox[b][9.5cm][c]{\linewidth}{\centering\includegraphics[width=\linewidth,height=4.3cm,keepaspectratio]{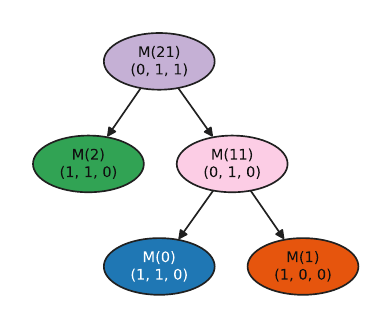}}
    \caption{Restricted Conley--Morse graph.}
  \end{subfigure}\hfill
  \begin{subfigure}[b]{0.5\linewidth}
    \centering
    \parbox[b][9.5cm][c]{\linewidth}{\centering\includegraphics[width=\linewidth,height=9.5cm,keepaspectratio]{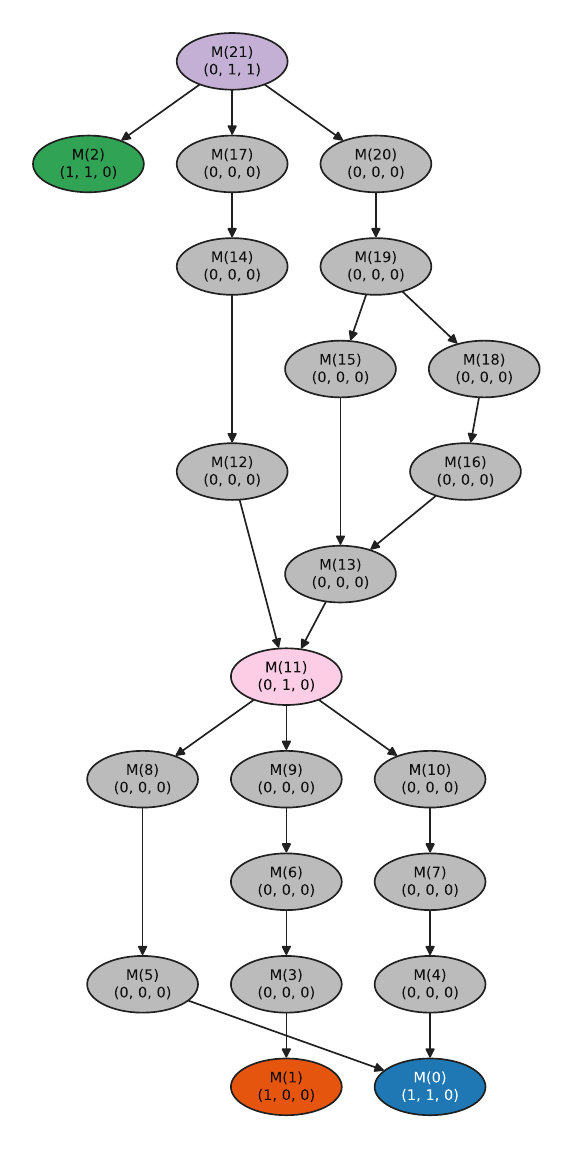}}
    \caption{Full Conley--Morse graph.}
  \end{subfigure}
  \caption{Impacting oscillator at $\tau=0.5$, on a $2^{11}\times2^{11}$ grid of $R=[-1.95,0.8]\times[-2.35,1.95]$.}
  \label{fig:impacting-oscillator}
\end{figure}

Figure~\ref{fig:impacting-oscillator} shows the computation for $\tau=0.5$. The Morse graph has $22$ Morse sets, and the five with nontrivial Conley index are $M(0)$, $M(1)$, $M(2)$, $M(11)$, and $M(21)$, with Conley indices $(1,1,0)$, $(1,0,0)$, $(1,1,0)$, $(0,1,0)$, and $(0,1,1)$, respectively. The restricted Conley--Morse graph orders them by $M(21)\to M(2)$, $M(21)\to M(11)$, $M(11)\to M(0)$, and $M(11)\to M(1)$, and the minimal Morse sets of $\MG(\cF)$ are $M(0)$, $M(1)$, and $M(2)$.

The Conley--Morse graph suggests the following dynamics. The minimal Morse sets $M(0)$, $M(1)$, and $M(2)$ correspond to attractors. The Conley index of $M(1)$ is that of an attracting equilibrium, and the Conley indices of $M(0)$ and $M(2)$ are that of an attracting periodic orbit. The Conley index of $M(11)$ is that of a hyperbolic equilibrium with one unstable direction, and the Conley index of $M(21)$ is that of a repelling periodic orbit. The restricted Conley--Morse graph then suggests connecting orbits from $M(21)$ to $M(2)$ and $M(11)$, and from the equilibrium in $M(11)$ to the attractors in $M(0)$ and $M(1)$.

By Birkhoff's representation theorem~\cite{Kalies:Mischaikow:Vandervorst:2022}, a finite lattice of attractors is isomorphic to the lattice of down-sets of its join-irreducible elements, which correspond to Morse sets. Accordingly, Figure~\ref{fig:impacting-oscillator-lattice} shows the lattice of down-sets of the restricted Conley--Morse graph, whose join-irreducible elements are the down-sets ${\downarrow}M:=\setdef{M'}{M'\leq M}$.

\begin{figure}[!htbp]
  \centering
  \includegraphics[width=0.6\linewidth]{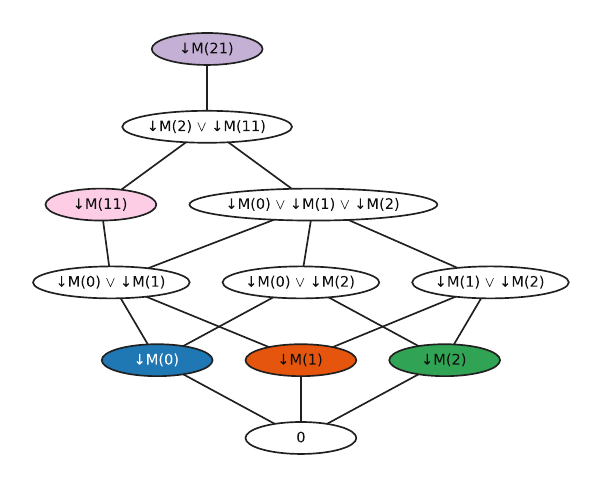}
  \caption{Lattice of attractors associated to the Morse sets with nontrivial Conley index.}
  \label{fig:impacting-oscillator-lattice}
\end{figure}

\FloatBarrier

\section*{Acknowledgments}
This work was supported by the Air Force Office of Scientific Research under award numbers FA9550-23-1-0400 and FA9550-23-1-0011.

\section*{Code availability}
The code used for the computations in Section~\ref{sec:example} is available at \url{https://github.com/bernardorivas/hybrid_boxmap}.

\newpage
\bibliographystyle{plain}
\bibliography{refs}

@article{Kalies:Mischaikow:Vandervorst:2016,
  author  = {Kalies, William D. and Mischaikow, Konstantin and Vandervorst, Robert C.A.M.},
  title   = {Lattice structures for attractors {II}},
  journal = {Foundations of Computational Mathematics},
  volume  = {16},
  number  = {5},
  pages   = {1151--1191},
  year    = {2016},
  doi     = {10.1007/s10208-015-9272-x}
}

@article{Rybakowski,
  author  = {Rybakowski, Krzysztof P.},
  title   = {On the Homotopy Index for Infinite-Dimensional Semiflows},
  journal = {Transactions of the American Mathematical Society},
  volume  = {269},
  number  = {2},
  pages   = {351--382},
  year    = {1982},
  doi     = {10.1090/S0002-9947-1982-0637695-7}
}

@article{Bonotto:Gimenes:Souto,
  author  = {Bonotto, Everaldo M. and Gimenes, Luciene P. and Souto, Ginnara M.},
  title   = {Asymptotically almost periodic motions in impulsive semidynamical systems},
  journal = {Topological Methods in Nonlinear Analysis},
  volume  = {49},
  number  = {1},
  pages   = {133--163},
  year    = {2017},
  doi     = {10.12775/TMNA.2016.065}
}

@article{Kvalheim:Gustafson:Koditschek,
  author  = {Matthew D. Kvalheim and Paul Gustafson and Daniel E. Koditschek},
  title   = {Conley's Fundamental Theorem for a Class of Hybrid Systems},
  journal = {SIAM Journal on Applied Dynamical Systems},
  volume  = {20},
  number  = {2},
  pages   = {784--825},
  year    = {2021},
  doi     = {10.1137/20M1336576}
}

@article{Kalies:Mischaikow:Vandervorst:2014,
  author  = {Kalies, William D. and Mischaikow, Konstantin and Vandervorst, Robert C.A.M.},
  title   = {Lattice structures for attractors {I}},
  journal = {Journal of Computational Dynamics},
  volume  = {1},
  number  = {2},
  pages   = {307--338},
  year    = {2014}
}

@article{Kalies:Mischaikow:Vandervorst:2022,
  author  = {Kalies, William D. and Mischaikow, Konstantin and Vandervorst, Robert C.A.M.},
  title   = {Lattice structures for attractors {III}},
  journal = {Journal of Dynamics and Differential Equations},
  volume  = {34},
  pages   = {1729--1768},
  year    = {2022},
  doi     = {10.1007/s10884-021-10056-8}
}

@misc{Rivas:Kalies,
  author = {Bernardo Rivas and William Kalies},
  title  = {Conley Index Theory for Hybrid Systems},
  year   = {2026},
  eprint = {2606.18501},
  archivePrefix = {arXiv},
  doi    = {10.48550/arXiv.2606.18501},
  url    = {https://arxiv.org/abs/2606.18501},
  note   = {arXiv:2606.18501v1}
}

@book{diBernardo:Budd:Champneys:Kowalczyk,
  author    = {di Bernardo, Mario and Budd, Christopher J. and Champneys, Alan R. and Kowalczyk, Piotr},
  title     = {Piecewise-Smooth Dynamical Systems: Theory and Applications},
  series    = {Applied Mathematical Sciences},
  volume    = {163},
  publisher = {Springer London},
  year      = {2008},
  doi       = {10.1007/978-1-84628-708-4}
}

@book{vanDerSchaft:Schumacher,
  author    = {van der Schaft, Arjan and Schumacher, Hans},
  title     = {An Introduction to Hybrid Dynamical Systems},
  series    = {Lecture Notes in Control and Information Sciences},
  volume    = {251},
  publisher = {Springer London},
  year      = {2000},
  doi       = {10.1007/BFb0109998}
}

@misc{CMGDB,
  author = {Gameiro, Marcio and Harker, Shaun and others},
  title  = {{CMGDB}: {Conley-Morse-Graph Database}},
  url    = {https://github.com/marciogameiro/cmgdb},
  year   = {2020}
}

@article{Franks:Richeson,
  author = {Franks, John and Richeson, David},
  title = {Shift Equivalence and the {Conley} Index},
  journal = {Transactions of the American Mathematical Society},
  year = {2000},
  volume = {352},
  number = {7},
  pages = {3305--3322},
  doi = {10.1090/S0002-9947-00-02488-0},
  url = {https://arxiv.org/abs/math/9910171}
}

@book{Goebel:Sanfelice:Teel,
  author    = {Goebel, Rafal and Sanfelice, Ricardo G. and Teel, Andrew R.},
  title     = {Hybrid Dynamical Systems: Modeling, Stability, and Robustness},
  publisher = {Princeton University Press},
  year      = {2012},
  doi       = {10.23943/princeton/9780691153896.001.0001},
  isbn      = {9780691153896}
}

@article{Johansson:Egerstedt:Lygeros:Sastry,
  author  = {Johansson, Karl Henrik and Egerstedt, Magnus and Lygeros, John and Sastry, Shankar},
  title   = {On the regularization of {Zeno} hybrid automata},
  journal = {Systems \& Control Letters},
  volume  = {38},
  number  = {3},
  pages   = {141--150},
  year    = {1999},
  doi     = {10.1016/S0167-6911(99)00059-6}
}

@article{Arai:Kalies:Kokubu:Mischaikow:Oka:Pilarczyk,
  author  = {Arai, Zin and Kalies, William and Kokubu, Hiroshi and Mischaikow, Konstantin and Oka, Hiroe and Pilarczyk, Pawe{\l}},
  title   = {A Database Schema for the Analysis of Global Dynamics of Multiparameter Systems},
  journal = {SIAM Journal on Applied Dynamical Systems},
  volume  = {8},
  number  = {3},
  pages   = {757--789},
  year    = {2009},
  doi     = {10.1137/080734935}
}

@inproceedings{Shia:Vasudevan:Bajcsy:Tedrake,
  author    = {Shia, Victor and Vasudevan, Ram and Bajcsy, Ruzena and Tedrake, Russ},
  title     = {Convex computation of the reachable set for controlled polynomial hybrid systems},
  booktitle = {53rd IEEE Conference on Decision and Control},
  pages     = {1499--1506},
  year      = {2014},
  doi       = {10.1109/CDC.2014.7039612}
}

@article{McGeer,
  author  = {McGeer, Tad},
  title   = {Passive Dynamic Walking},
  journal = {The International Journal of Robotics Research},
  volume  = {9},
  number  = {2},
  pages   = {62--82},
  year    = {1990},
  doi     = {10.1177/027836499000900206}
}

@book{Izhikevich:2007,
  author    = {Izhikevich, Eugene M.},
  title     = {Dynamical Systems in Neuroscience: The Geometry of Excitability and Bursting},
  series    = {Computational Neuroscience},
  publisher = {MIT Press},
  address   = {Cambridge, MA},
  year      = {2007},
  isbn      = {9780262090438},
  doi       = {10.7551/mitpress/2526.001.0001}
}

@incollection{Mischaikow:Mrozek,
  author    = {Mischaikow, Konstantin and Mrozek, Marian},
  title     = {Conley index},
  booktitle = {Handbook of Dynamical Systems},
  editor    = {Fiedler, Bernold},
  volume    = {2},
  pages     = {393--460},
  publisher = {Elsevier},
  year      = {2002},
  isbn      = {9780444501684},
  doi       = {10.1016/S1874-575X(02)80030-3}
}

@book{Kaczynski:Mischaikow:Mrozek,
  author    = {Kaczynski, Tomasz and Mischaikow, Konstantin and Mrozek, Marian},
  title     = {Computational Homology},
  series    = {Applied Mathematical Sciences},
  publisher = {Springer},
  address   = {New York},
  year      = {2004},
  doi       = {10.1007/b97315}
}

@misc{pychomp,
  author = {Gameiro, Marcio and Harker, Shaun},
  title  = {{pyCHomP2}: {CHomP} ({Computational Homology Project}) with {Python} bindings},
  note   = {Version 1.1.2},
  url    = {https://github.com/marciogameiro/pyCHomP2},
  year   = {2025}
}

\end{document}